\documentclass[final,3p,times]{elsarticle}

\usepackage{amssymb}
\usepackage{amsmath}
\usepackage{amsthm}

\usepackage{fancyhdr}

\usepackage{dsfont}

\RequirePackage{graphicx}
\RequirePackage{pdfpages}
\RequirePackage{xcolor}		
\RequirePackage{array}
\RequirePackage{booktabs}		
\RequirePackage{color}
\usepackage{float}
\usepackage{comment}
\usepackage{caption}
\usepackage{subcaption}

\numberwithin{equation}{section}
\theoremstyle{plain}
\newtheorem{Thrm}{Theorem}[section]
\newtheorem{Assu}{Assumption}

\newtheorem{Lem}[Thrm]{Lemma}
\newtheorem{Cor}[Thrm]{Corollary}
\newtheorem{Prop}[Thrm]{Proposition}
\theoremstyle{remark}
\newtheorem{Ex}{Example}
\newtheorem{Assum}[Assu]{Assumption}

\newtheorem{Rem}{Remark}

\newcommand{\N}{\mathbb{N}}

\newcommand{\R}{\mathbb{R}}

\newcommand{\BB}{\mathbb{B}}
\newcommand{\E}{\mathbb{E}}

\renewcommand{\P}{\mathbb{P}}

\newcommand{\A}{\mathcal{A}}
\newcommand{\B}{\mathcal{B}}
\newcommand{\D}{\mathcal{D}}

\newcommand{\J}{\mathcal{J}}

\newcommand{\LL}{\mathcal{L}}
\newcommand{\NN}{\mathcal{N}}

\newcommand{\HH}{\mathcal{H}}

\newcommand{\Cov}{\mathrm{Cov}}
\newcommand{\Var}{\mathrm{Var}}

\newcommand{\Span}{\mathrm{span}}
\newcommand{\rank}{\mathrm{rank}}
\newcommand{\ran}{\mathrm{ran}}
\renewcommand{\ker}{\mathrm{ker}}
\newcommand{\supp}{\mathrm{supp}}

\newcommand{\tr}{\mathrm{tr}}

\renewcommand{\phi}{\varphi}
\renewcommand{\epsilon}{\varepsilon}

\newcommand{\<}{\langle}
\renewcommand{\>}{\rangle}
\newcommand{\wt}{\widetilde}
\newcommand{\wh}{\widehat}
\newcommand{\ol}{\overline}

\newcommand{\hide}[1]{} 

\allowdisplaybreaks

\graphicspath{ {Simulationen/} }

\journal{Journal Of Functional Analysis}

\begin{document}

\begin{frontmatter}



\title{Sobolev-Mercer Expansions and Applications to Stochastic Processes}

\author{Daniel Constantin Rademacher} 
\ead{d.rademacher@tugraz.at}
\ead[url]{https://www.tugraz.at/institute/stat/home}

\affiliation{organization={TU Graz},
            addressline={Kopernikusgasse 24/III}, 
            city={Graz},
            postcode={8010}, 
            state={Styria},
            country={Austria}}

\begin{abstract}

We establish a fundamental extension of Mercer’s celebrated theorem by introducing a class of higher-order kernel operators
acting on Sobolev spaces $H^k(\Theta)$, where $\Theta \subset \R^d$ is a bounded domain and $k\in\N_0$
corresponds to the order of weak differentiability. 
These operators transcend the classical $L^2$-framework by explicitly incorporating the information encoded in the (weak) derivatives of the kernel and 
correspond precisely to the Hilbert-Schmidt mappings between Sobolev spaces.
The spectral decomposition of these operators then yields Mercer-type expansions that are optimal in $H^k(\Theta\times\Theta)$.
Notably, we derive from the embedding properties of Sobolev spaces, that for $k>d$, these expansions also converge uniformly without requiring 
the kernel to be positive definite. 
For positive definite kernels, we confirm the nuclearity of these higher-order operators and establish a significant refinement of Mercer's Theorem, 
which ensures uniform convergence of all term-wise derivatives and provides explicit convergence rates (including derivatives) tied to the spectral decay. 
These results lead to novel spectral representations of Reproducing Kernel Hilbert Spaces and have subtle implications for stochastic analysis. 
Applied to the covariance kernels of weakly differentiable random fields, our theory provides refined Karhunen-Loève expansions that facilitate the 
simultaneous mean-square optimal approximation of both the process and its derivatives.  
\end{abstract}

\begin{keyword}
Mercer's Theorem \sep Reproducing Kernel Hilbert Space \sep Karhunen-Loève \sep Positive Definite Kernel \sep Weakly Differentiable Stoch. Process 
\sep Covariance Operator

\MSC[2020] 41A28 \sep 47A75  \sep 60G12 \sep 46E22

\end{keyword}

\end{frontmatter}





\section{Introduction}\label{Sec:Introduction}
Since its publication in 1909, Mercer’s theorem \cite{Mercer1909FunctionsOfPositiveAndNegativeType} - named after James Mercer - has remained a cornerstone 
in the spectral theory of compact operators. 
Originally emerging from the foundational work of Hilbert \cite{Hilbert1904GrundzügeTheoriederIntegralgleichungenErsteMitteilung} and 
Schmidt \cite{Schmidt1907ZurTheorieLinearerundNichtlinearerIntegralgleichungen}, 
the theorem now underpins several diverse fields. 
Most notably, it is central to \emph{integral equations} and \emph{Fredholm theory}, serves as the basis for the \emph{Karhunen–Loève representation}
\cite{Karhunen1947LineareMethodenInWahrscheinlichkeitsrechnung} in stochastic processes, 
and has more recently become a vital theoretical tool for \emph{kernel methods} in machine learning (e.g., \cite{Cucker2001OnTheMathematicalFoundationsOfLearning}). 
The advancements presented in this work significantly refine these classic results, which now emerge as specific instances of a more general framework. 
To provide context for these developments, we begin with a concise review of the theorem's current status.
 
\paragraph{Mercer's theorem} 
In its original form Mercer's theorem asserts that for any continuous and positive definite kernel 
$h\colon [a,b]\times [a,b] \to \R$, there exists an orthonormal system $\{ e_j\mid j\in \N  \}$ of continuous functions in $L^2([a,b])$ such that 
\begin{align}\label{Eq:MercerExpansion_Intro}
	h(x,y) = \sum_{j} \lambda_j\, e_j(x)\,e_j(y),
\end{align}
where the convergence is absolute and uniform. 
Here, the $e_j$'s are obtained as the normalized eigenfunctions of the $L^2$-integral operator associated to $h$,
\begin{align}\label{Eq:AssociatedOperator_Intro}
	T_h\colon L^2([a,b]) \to L^2([a,b]), \quad [T_hf](y) = \int_a^b h(x,y)\,f(x) \,dx, 
\end{align}
and the $\lambda_j$'s are the corresponding non-zero eigenvalues. While the spectral theorem for compact operators ensures $L^2$-convergence
for any square-integrable kernel, the uniform convergence of \eqref{Eq:MercerExpansion_Intro} provided by Mercer's theorem is a significantly stronger result.
It additionally requires $h$ to be continuous and positive definite, meaning $h$ is symmetric and satisfies
\begin{align}\label{Eq:PositiveDefiniteKernel_Intro}
	\sum_{i,j\leq n} c_i\, h(x_i,x_j)\, c_j \geq 0
\end{align}
for all $c_1,\ldots,c_n\in \R$, $x_1,\ldots,x_n \in [a,b]$ and $n\in \N$.
Indeed, Mercer's primary objective was to establish conditions under which a continuous and symmetric kernel $h$ yields an  
associated operator \eqref{Eq:AssociatedOperator_Intro} that is non-negative, i.e. satisfies for all continuous $f\colon [a,b] \to \R$,
\begin{align}\label{Eq:NonNegativeOperator_Intro}
	\<T_h f,f\>_{L^2} = \int_a^b \int_a^b f(x)\,h(x,y)\,f(y) \,dx\,dy \geq 0.
\end{align}
He first demonstrated the equivalence of \eqref{Eq:PositiveDefiniteKernel_Intro} and \eqref{Eq:NonNegativeOperator_Intro} - though Young 
\cite{Young1909ANoteOnAClassOfSymmetricFunctions} subsequently showed that
\eqref{Eq:PositiveDefiniteKernel_Intro} implies \eqref{Eq:NonNegativeOperator_Intro} for all Lebesgue-integrable $f$ - and then derived 
the uniform convergence of \eqref{Eq:MercerExpansion_Intro}. 
An immediate and vital consequence of this uniform convergence is the explicit trace formula 
\begin{align}\label{Eq:TraceFormula_Intro}
	\tr(T_h) = \int_a^b h(x,x) \,dx = \sum_j \lambda_j.
\end{align}  
This identity equates the integral of the kernel along its diagonal with the sum of its (non-negative) eigenvalues.
Notably, this formula implies that the eigenvalues are summable, confirming
that the operator $T_h$ is nuclear (or trace-class) for a continuous positive definite kernel.
These results also extend naturally to complex-valued kernels via straight forward adaptation.

\paragraph{Karhunen-Loève representation} 
Suppose $X=(X_t \mid a\leq t\leq b)$ is a square-integrable, centred stochastic process with a continuous covariance kernel 
$\gamma_X(s,t) = \E[X_s\,X_t]$. Expanding the process $X$ in terms of the eigenfunctions $e_j$ of the operator 
\eqref{Eq:AssociatedOperator_Intro} associated to the kernel $\gamma_X$ then gives the $L^2(\P)$-representation 
\begin{align}\label{Eq:KL-Representation_Intro}
	X_t = \sum_j \< X,e_j \>_{L^2} e_j(t),
\end{align}
where $L^2(\P)$ denotes the Lebesgue space of random variables with finite variance. 
While a stochastic process can be represented by various series expansions using complete sets of deterministic functions,
the Karhunen-Loève (KL)-expansion \eqref{Eq:KL-Representation_Intro} is of particular interest due to its biorthogonality: 
both the eigenfunctions $e_j$ and the corresponding scores $Z_j:= \<X,e_j\>_{L^2}$ are orthogonal in $L^2([a,b])$ and $L^2(\P)$, respectively. 
Consequently, the entire stochastic information of the process is encapsulated within a set of uncorrelated random variables with $\Var(Z_j) = \lambda_j$. 
Furthermore, according to the trace formula \eqref{Eq:TraceFormula_Intro}, the expectation of the $L^2$-norm (total variance) satisfies 
\begin{align}\label{Eq:TotalVariance_Intro}
	\E\|X\|_{L^2}^2 = \int_a^b \E [X_t^2] \, dt = \int_a^b \gamma_X(t,t) \,dt = \sum_j \lambda_j.
\end{align}
Truncating the series \eqref{Eq:KL-Representation_Intro} thus provides an optimal finite-dimensional approximation of $X$ with respect to the 
$L^2$-mean square error (total mean square error).
By Mercer's theorem the convergence of \eqref{Eq:KL-Representation_Intro} in mean square is also uniform in $t$, allowing
the truncation error to be controlled consistently across the interval. Finally, for Gaussian processes, the scores $Z_j$ are stochastically 
independent, normally distributed random variables and the series \eqref{Eq:KL-Representation_Intro} also converges almost surely uniformly (i.e., pathwise) 
in case of continuous sample paths.\\
These properties render the KL-representation not only a fundamental tool in the theory of stochastic processes but also a cornerstone of applications 
in statistics, data science and engineering - a prototypical example being principal component analysis for the dimension reduction in complex 
stochastic problems.

\paragraph{Relevant literature}
Due to their fundamental importance across the various applications mentioned above, numerous generalizations 
of Mercer's findings, the KL-expansion, and related results have emerged in the literature ever since.
Without claiming to be exhaustive, a few relevant sources are listed below:
Regarding the underlying domain, the interval $[a,b]$ can be replaced by any compact subset of $\R^d$, or more generally, any compact Hausdorff 
space equipped with a finite Borel measure (e.g., \cite{Riesz1955FunctionalAnalysis}).
The trace formula \eqref{Eq:TraceFormula_Intro} was later established for a broader class of kernels by Brislawn \cite{Brislawn1988KernelsOfTraceClassOperators}.
Extensions to unbounded domains in $\R$ were obtained by Buesco \cite{Buescu2004PositiveIntergralOperatorsInUnboundedDomains}
and Buesco et al. \cite{Buescu2004PositiveDefinitenessIntegralEqAndFourierTrafo}.
Furthermore, Mercer's theorem has been extended to matrix-valued kernels by De Vito et al. \cite{DeVito2013ExtensionMercerMatrixKernel}
and only recently, the equivalence of conditions \eqref{Eq:PositiveDefiniteKernel_Intro} and \eqref{Eq:NonNegativeOperator_Intro} 
was confirmed for this setting by Neumann and Tuschmann \cite{Neumann2025MercerYoungTheoremForMatrixKernel}.
A different generalization of Mercer's theorem by Ferreira and Menegatto \cite{Ferreira2009EigenvaluesOfIntegralOperatorsDPKernels} 
emphasized the connection between positive definite kernels, their integrability over the diagonal, and the nuclearity of the associated $L^2$-integral 
operators.
Similarly, Steinwart and Scovel \cite{Steinwart2012MercersTheoremOnGeneralDomains} and Steinwart \cite{Steinwart2019GenericKL-Expasions} explored the extend
to which topological assumptions on the domain can be relaxed while ensuring that Mercer-like expansions and KL-representations still converge 
(almost everywhere) pointwise or uniformly.    
The notion of a KL-representation has also been recently adapted to random measures by Vergara \cite{Vergara2026KarhunenLoeveForRandomMeasures} and 
to point processes by Picard et al. \cite{Picard2026PCAforPointProcesses}.\\
The term-wise differentiability of \eqref{Eq:MercerExpansion_Intro} and \eqref{Eq:KL-Representation_Intro} was first investigated by 
Kadota \cite{Kadota1967TermByTermDiffMercer, Kadota1967DifferentiationKL-Expansion}. 
The decay rate of eigenvalues for the operator \eqref{Eq:AssociatedOperator_Intro} was linked to the kernel's order of differentiability
by Reade \cite{Reade1983EigenvaluesOfPDKernels, Reade1983EigenvaluesOfPDKernelsII, Reade1986PositiveDefiniteC^pKernel}, 
Ha \cite{Ha1986EigenvaluesDifferentiableDPKernel} and later, among others, 
by Buesco \cite{Buescu2007EigenvalueDistrMercerLikeKernels}. 
Most recently, Takhanov \cite{Takhanov2023SpeedOfConvergenceMercer} 
demonstrated how kernel smoothness affects the rate of uniform convergence 
by deriving the first explicit bounds using the powerful Gagliardo-Nirenberg inequality.

\paragraph{Contributions and outline} 
As discussed in the introduction, Mercer expansions of the form \eqref{Eq:MercerExpansion_Intro} and the 
related KL-representations \eqref{Eq:KL-Representation_Intro} fundamentally rely on the spectral decomposition of the 
classic $L^2$-integral operator \eqref{Eq:AssociatedOperator_Intro}. 
The objective of this work is to transcend this limitation, enabling a more refined analysis for (weakly) differentiable kernels.\\
In section \ref{Sec:KernelExpansions}, we show that the classic $L^2$-integral operator is a 
special case within a broader class of so called kernel operators $T_{h,k}$ of order $k\in\N_0$ (see \eqref{Eq:AssociatedOperator} for the definition). 
These operators can be associated with any sufficiently weakly differentiable kernel $h\colon \Theta\times \Theta \to \R$ on a bounded domain $\Theta \subset \R^d$.
This framework allows to incorporate the additional information encoded in the weak derivatives of a kernel more accurately and we show 
(Lemma \ref{Lem:CharacterizationHS-Opertors}), that operators of this type correspond precisely to the class of Hilbert-Schmidt mappings between Sobolev spaces.\\ 
Leveraging the spectral theory for compact operators, we utilize the resulting eigenvalues and eigenvectors 
to construct series representations of the form \eqref{Eq:MercerExpansion_Intro} for symmetric kernels, which 
turn out to be \emph{optimal for simultaneously approximating both the kernel and its derivatives}.
If $k>d$ and the boundary of $\Theta \times\Theta$ satisfies the cone property, we derive (Theorem \ref{Thrm:KernelExpansion}) uniform convergence of the series 
- including derivatives of order less than $k-d$ - from a suitable Sobolev embedding, notably without requiring positive definiteness.  
Furthermore, we also show that $k>d/2$ is sufficient to ensure the universal uniform boundedness of the eigenfunctions. 
These mark the first positive findings of this kind for arbitrary symmetric indefinite kernels.\\   
Without relying on a Sobolev embedding we then establish a significant refinement of Mercer's theorem for strongly differentiable positive 
definite kernels (Theorem \ref{Thrm:Mercer}) by showing uniform convergence of all term-wise derivatives independent of the dimension and the geometry of the domain.
Provided $\Theta$ does satisfy the cone property and $k>d/2$, the universal boundedness of the eigenfunctions translates into rates for the uniform convergence based 
solely on eigenvalue decay (Corollary \ref{Cor:MercerRateOfConvergence}) in this case - including derivatives of order less than $k-d/2$.  
Moreover, we confirm non-negativity and nuclearity of higher-order kernel operators (Corollary \ref{Cor:NuclearNorm}),
leading to a direct extension of the trace formula \eqref{Eq:TraceFormula_Intro}.\\
As an immediate consequence (Theorem \ref{Thrm:RKHS_SpectralRep}), this leads to novel spectral representations for the reproducing kernel Hilbert space (RKHS) 
associated with a positive definite kernel.\\
In section \ref{Sec:ApplicatonStochasticProcesses}, we also present a second application by linking higher-order kernel operators to covariance operators of weakly differentiable 
random fields:
We establish (Lemma \ref{Lem:CovarianceOperator}) a generalized version of the variance identity \eqref{Eq:TotalVariance_Intro}
and refine the KL-representation \eqref{Eq:KL-Representation_Intro} accordingly (Theorem \ref{Thrm:KLExpansionSobolev}), 
including specific rates (Corollary \ref{Cor:KLExpansionSobolev}) and path-wise convergence for Gaussian elements (Corollary \ref{Cor:KLGaussian}).\\  
Finally, we apply this theory to the integrated Brownian motion process (Examle \ref{Ex:IntegratedBM}). 
By analytically deriving its $H^1$-eigenvalues and -eigenfunctions (Example \ref{Ex:KLExpansionSobolev}), we provide a 
direct comparison to standard $L^2$-theory, undistorted by numerical inaccuracies.

\section{Preliminaries}\label{Sec:Preliminaries}
To facilitate a smooth transition and avoid lengthy digressions in the subsequent sections, we consolidate essential notation 
some theoretical background.\\

Let $d \in \N=\{1,2,\ldots\}$. Throughout this text, the symbol $\Theta$ is reserved for a $d$-dimensional bounded domain - that is, an open and 
bounded subset of $\R^d$. 
For a point $x=(x_1,\ldots,x_d)\in\R^d$, let $\|x\|_2 = (\sum_{i\leq d} |x_i|^2)^{1/2}$ denote its euclidean 
norm. We define $\BB_{\epsilon}(x) := \{ y\in\R^d\mid \|x-y\|_2<\epsilon \}$ as the open ball of radius $\epsilon>0$ centred at $x$, 
writing $\BB_\epsilon$ as shorthand for $\BB_\epsilon(0)$.
For any sets $G,O\subseteq \R^d$, $\ol{G}$ denotes the closure of $G$ in $\R^d$ and
the notation $G \subset \subset O$ indicates that $\ol{G}$ is compact (i.e. closed and bounded) and $\ol{G} \subseteq O$. 
A function $f\colon G \to \R$ is said 
to have compact support in $G$ if $\supp f = \ol{\{ x\in G \mid f(x)\not= 0 \}}\subset\subset G$. 
For a bounded function $f\colon G \to \R$, we denote its supremum by $\|f\|_{\infty} = \sup_{x\in G} |f(x)|$.\\
Let $\alpha=(\alpha_1,\ldots,\alpha_d) \in \N_0^d$, where $\N_0=\N\cup \{0\}$, be a multi-index 
with degree $|\alpha| = \sum_{i\leq d} \alpha_i$.
The statement ``\,let $|\alpha| \leq k$\,'' serves as shorthand for ``\,let $\alpha=(\alpha_1,\ldots,\alpha_d) \in \N_0^d$ be such that $|\alpha| 
\leq k$\,''. 
Furthermore, let $\partial_j = \partial/\partial x_j$ denote the partial derivative with respect to $x_j$. We then write  
$\partial^{[\alpha]} = \partial_1^{\alpha_1}\ldots\partial_d^{\alpha_d}$ for a differential operator 
of order $|\alpha|$. Specifically, for a function $f\colon \R^d \supset G \to \R$ sufficiently differentiable at $x\in G$: 
\begin{align*}
	f^{[\alpha]}(x) 
	:= \partial^{[\alpha]}f(x) 
	= \frac{\partial^{\alpha_1}}{\partial x_1}\ldots\frac{\partial^{\alpha_d}}{\partial x_d}f(x_1,\ldots,x_d).
\end{align*}
We refer to any function $h\colon \R^{2d} \supset \Theta\times \Theta \to \R$ as a \emph{kernel}. 
A kernel $h\colon \Theta\times \Theta\to \R$ is called positive definite if it is symmetric (i.e., $h(x,y)=h(y,x)$) and 
satisfies \eqref{Eq:PositiveDefiniteKernel_Intro} for all $c_1,\ldots,c_n\in \R$, $x_1,\ldots,x_n \in \Theta$ and $n\in \N$.
For two multi-indices $\alpha,\beta \in \N_0^d$ the notation $\partial^{[\alpha,\beta]}h$ denotes the mixed-derivative 
\begin{align*}
	\partial^{[\alpha,\beta]}h(x,y) 
	= \frac{\partial^{\alpha_1}}{\partial x_1}\ldots\frac{\partial^{\alpha_d}}{\partial x_d}\,
	\frac{\partial^{\beta_1}}{\partial y_1}\ldots\frac{\partial^{\beta_d}}{\partial y_d}
	h(x_1,\ldots,x_d,y_1,\ldots,y_d).
\end{align*}

\paragraph{Operator theory}
Let $\LL(E,F)$ denote the space of bounded linear operators from a normed space $(E, \|\cdot\|_E)$ into 
another normed space $(F, \|\cdot\|_F)$ with $\LL(E)$ serving as shorthand for $\LL(E,E)$. 
If $F$ is a Banach space, then $\LL(E,F)$ is complete with respect to the operator norm $\|T\|_{\LL} = \sup \{\|Tu\|_F \mid\,\|u\|_E \leq 1\}$. 
For $T\in \LL(E,F)$ let $\ran(T)=\{ Tu \mid u\in E \} \subseteq F$ denote its range (or image), $\ker(T) = \{ u\in E\mid Tu=0 \} \subset E$ 
its kernel (or null-space) and $\rank(T) = \dim \ran (T) \in \N_0\cup\{\infty\}$ its rank.
An operator  $T\in\LL(E,F)$ is called compact if the image of the unit ball $\BB_E = \{ u\in E \mid \|u\|_E \leq 1 \}$
is relatively compact in $F$ - this is, if the closure $\ol{T(\BB_E)}$ is compact in $F$.\\ 
Let $E' = \LL(E,\R)$ denote the continuous dual space of $E$.
A subset $E_0' \subseteq E'$ is said to be separating if it separates the elements of $E$: for any $u \not=v \in E$, there 
exists a functional $\varphi \in E_0'$ such that $\varphi(u) \not= \varphi(v)$.\\ 
If $E=H$ is a Hilbert space with inner product $\<\cdot,\cdot\>_H$, then $H$ is identified with its dual $H'$ via the Riesz 
representation theorem: for every $\varphi\in H'$ exists a unique $v\in H$ such that $\varphi(u) = \<u,v\>_H$ and $\|\varphi\|_\LL = \|v\|_H$. 
For any $T\in \LL(H,K)$, where $K$ is another Hilbert space, 
$T^* \in \LL(K,H)$ denotes the adjoint operator. 
An operator $T\in \LL(H)$ is said to be self-adjoint if $T = T^*$ and non-negative if $\<Tu,u\>_H \geq 0$ for all $u\in H$. 
By the spectral theorem (e.g., \cite{DunfordSchwarz1988Linear Operators}, \cite{Werner2018Funktionalanalysis}) every compact operator $T\in\LL(H,K)$ 
admits a canonical representation $T = \sum_j s_j\, v_j \otimes u_j$. Here,
$\{u_j\}$ and $\{ v_j \}$ are orthonormal bases for $\ol{\ran(T^*)}\subseteq H$ and $\ol{\ran(T)}\subseteq K$, respectively. 
The notation $v \otimes u\colon H \to K$ for $u\in H$ and $v\in K$ denotes the rank-one operator defined by $v\otimes u(f) := \<f,u\>_H \,v$.
The singular values $\{s_j\}$ are always assumed to be 
arranged in descending order, $s_1 \geq s_2 \geq \ldots \geq 0$, and satisfy $s_j \to 0$.
The triple $\{ (s_j,u_j,v_j) \mid j\in\N \}$ is called \emph{singular system} of $T$.
The decay rate of the singular values allows for the interpretation as a measure of regularity. A compact operator $T\in \LL(H,K)$ 
belongs to the $p$-Schatten-class $S_p(H,K)$ for $1\leq p < \infty$, if its $p$-Schatten-norm $\|T\|_p = (\sum_{j} |s_j|^p)^{1/p}$ is finite. 
Endowed with this norm $S_p(H,K)$ is a separable Banach space and $S_p(H) = S_p(H,H)$.
Two special classes are of particular importance: the nuclear (or trace-class) operators $N(H,K) = S_1(H,K)$ and the 
Hilbert-Schmidt operators $HS(H,K) = S_2(H,K)$. 
Let $\{e_j\mid j\in\N\}$ be any orthonormal basis of $H$. Then the Hilbert-Schmidt norm $\|\cdot\|_{HS} = \|\cdot\|_2$ is induced by 
the inner product $\<T,S\>_{HS} = \sum_j \<Te_j,Se_j\>_K$ making $HS(H,K)$ a separable Hilbert space. 
Moreover, for $T \in N(H)$, the trace is defined as $\tr(T) = \sum_j \<Te_j,e_j\>_H$ and if $T$ is non-negative, then $\|T\|_N = \|T\|_1 = \tr(T)$. 
Finally, we recall that $\|\cdot\|_{\LL} \leq \|\cdot\|_{HS} \leq \|\cdot\|_{N}$ and thus $N(H,K) \subset HS(H,K) \subset \LL(H,K)$.  

\paragraph{Spaces of continuous functions} 
For any $k\in \N_0$, let $C^k(\Theta)$ denote the space of 
$k$-times continuously differentiable functions on $\Theta$; in particular, $C(\Theta) \equiv C^0(\Theta)$ is the space 
of continuous functions on $\Theta$. 
We define $C^{\infty}(\Theta) = \bigcap_{k} C^k(\Theta)$ and denote by $C_c^{\infty}(\Theta)$ the 
subspace of functions in $C^{\infty}(\Theta)$ with compact support in $\Theta$.\\
Any uniformly continuous function $f \in C(\Theta)$ possesses a unique continuous extension to the closure $\ol\Theta$. 
Accordingly, $C^k(\ol\Theta)$ is defined as the vector space of all functions $f\in C^k(\Theta)$ such that
$f^{[\alpha]} = \partial^{[\alpha]}f$ is uniformly continuous on $\Theta$ for $|\alpha| \leq k$.
Equipped with the norm $\| f \|_{C^k(\ol\Theta)} = \sum_{|\alpha| \leq k} \|f^{[\alpha]}\|_\infty$ the space $C^k(\ol\Theta)$ is 
a Banach space. 
The slightly larger space $C_B^k(\Theta)$ consists of all functions $f\in C^k(\Theta)$ for which $f^{[\alpha]}$ is bounded 
but not necessarily uniformly continuous for $|\alpha|\leq k$. This space, equipped with the norm 
$\| f \|_{C_B^k(\Theta)} = \max_{|\alpha| \leq k} \|f^{[\alpha]}\|_\infty$, is also a Banach space and plays a 
central role in Sobolev embeddings (see Remark \ref{Rem:ContinuousRepresentatives}).
For the product domain $\Theta\times \Theta$, the spaces $C^k(\Theta \times \Theta)$, $C^{\infty}(\Theta \times \Theta)$ etc. are defined 
accordingly.

\paragraph{Weak derivatives and Sobolev spaces}
Let $L_{loc}^1(\Theta)$ and $L_{loc}^1(\Theta \times \Theta)$ denote the spaces of locally Lebesgue-integrable functions on $\Theta$ and  
$\Theta\times\Theta$, respectively. Motivated by partial integration, a function
$f\in L_{loc}^1(\Theta)$ is said to have a weak derivative $g \in L_{loc}^1(\Theta)$ of order $\alpha \in \N_0^d$ if, for all $\varphi \in C_c^{\infty}(\Theta)$, 
the following holds:
\begin{align}\label{Eq:WeakDerivativeFunction}
	\int_\Theta f(x) \varphi^{[\alpha]}(x) \,dx 
	= (-1)^{|\alpha|} \int_\Theta g(x) \varphi(x) \,dx. 
\end{align}
In this case, $g$ is unique in $L_{loc}^1(\Theta)$ (i.e. almost everywhere) and is denoted by $\partial^{(\alpha)}f$ or $f^{(\alpha)}$. 
Similarly, for two multi-indices $\alpha,\beta \in \N_0^d$, a kernel $h\in L_{loc}^1(\Theta\times\Theta)$ has an $(\alpha,\beta)$-weak derivative 
$g\in L_{loc}^1(\Theta\times\Theta)$ if, for all $\varphi \in C_c^{\infty}(\Theta\times\Theta)$:
\begin{align}\label{Eq:WeakDerivativeKernel}
	\int_\Theta \int_{\Theta} h(x,y) \partial^{[\alpha,\beta]}\varphi(x,y) \,dx\,dy 
	= (-1)^{|\alpha|+|\beta|} \int_\Theta g(x,y) \varphi(x,y) \,dx \,dy. 
\end{align}
Again, $g$ is unique in $L_{loc}^1(\Theta\times\Theta)$ and denoted by $\partial^{(\alpha,\beta)}h$. 
Note that if a weak derivative is continuous, then it coincides with the classical (strong) derivative.
Let $L^2(\Theta)$ denote the Lebesgue space of square-integrable functions on $\Theta$ with the inner product 
$\<f,g\>_{L^2} = \int_\Theta f(x)g(x) \,dx$.
For $k\in \N_0$, the Sobolev space $H^k(\Theta)$ is then defined as
\begin{align*}
	H^k(\Theta)
	= \{ f\in L^2(\Theta) \mid \partial^{(\alpha)}f ~\text{exists and}~ \partial^{(\alpha)}f \in L^2(\Theta) 
	~\text{for all}~|\alpha|\leq k \}.
\end{align*}
These subspaces of $L^2(\Theta) = H^{0}(\Theta)$ are separable Hilbert spaces with inner product
\begin{align*}
	\<f,g\>_{H^k} 
	= \sum_{|\alpha|\leq k} \<f^{(\alpha)}, g^{(\alpha)}\>_{L^2} 
	= \sum_{|\alpha|\leq k} \int_\Theta f^{(\alpha)}(x)\, g^{(\alpha)}(x) \, dx.
\end{align*}
For the product domain $\Theta\times \Theta$ the Sobolev spaces $H^k(\Theta\times \Theta)$ are defined accordingly. Moreover, 
for $k_\alpha, k_{\beta} \in \N_0$ we also define the Sobolev space of mixed order
\begin{align}\label{Eq:SobolevSpaceMixedOrder}
		H^{k_\alpha,k_\beta}(\Theta \times \Theta)
		:= \big\{ h\in L^2(\Theta \times \Theta) \mid \partial^{(\alpha,\beta)} h~\text{exists and}~\partial^{(\alpha,\beta)}h \in L^2 
		~\text{for}~|\alpha| \leq k_{\alpha},~|\beta|\leq k_{\beta}  \big\}.
\end{align}
This is also a Hilbert space when endowed with the inner product 
\begin{align}\label{Eq:InnerProductMixedOrder}
	\< h,g \>_{H^{k_\alpha,k_\beta}} 
	= \sum_{|\alpha| \leq k_{\alpha}} \sum_{|\beta| \leq k_{\beta}} \< \partial^{(\alpha,\beta)}h, \partial^{(\alpha,\beta)}g \>_{L^2},
\end{align}
see Lemma \ref{Lem:U^kIsAhilbertSpace}.
It should be emphasized that the elements of such Sobolev spaces are, strictly speaking, equivalence classes of functions
that are equal almost everywhere (a.e.). However, following common practice, we will use $f\in H^{k}(\Theta)$
to refer to both the equivalence class and, where appropriate, a specific representative.

\begin{Rem}[The space $H_\kappa(\Theta)$]\label{Rem:H_Kappa}
From an algebraic or topological point of view it can be convenient to represent $f \in H^k(\Theta)$ as a collection of its weak derivatives 
$(f^{(\alpha)})_{|\alpha|\leq k}$, that is, as an element of the product space
$L_\kappa^2(\Theta) = \prod_{j=1}^\kappa L^2(\Theta)$, where $\kappa=\kappa(k,d)=\binom{k+d}{k}$ denotes the cardinality of 
$\{ \alpha \in \N_0^d\mid |\alpha| \leq k \}$. 
Endowed with the norm $\|f\|_{L_\kappa^2}^2 = \sum_{j=1}^\kappa \|f_j\|_{L^2}^2$ for $f=(f_1,\ldots,f_\kappa)$, $L_\kappa^2(\Theta)$ is a (direct sum) 
Hilbert space. 
Since the embedding $\J\colon H^k(\Theta) \to L_\kappa^2(\Theta)$ defined by  $\J f = (f^{(\alpha)})_{|\alpha| \leq k}$ preserves the norm 
$\|\J f\|_{L_\kappa^2} = \|f\|_{H^k}$, it is also an isometric isomorphism from $H^k(\Theta)$ onto its image
\begin{align*}
	H_{\kappa}(\Theta) := \J(H^k(\Theta)) \subset L_\kappa^2(\Theta).
\end{align*}  
Because $H^k(\Theta)$ is complete and $L_\kappa^2(\Theta)$ is a separable Hilbert space, $H_\kappa(\Theta)$ is likewise a separable Hilbert space.
\end{Rem}
    
\paragraph{Sobolev embeddings} The prominence and utility of Sobolev spaces stems primarily from their embedding characteristics 
(e.g., \cite{AdamsFournier1977ConeCondition}, \cite{Adams1975SobolevSpace}). 
These embeddings guarantee, among other things, the existence of continuous representatives 
for sufficiently smooth domains, provided that certain dimension-dependent regularity conditions are met. 
A basic assumption on the domain $\Theta$ in this context is the following: 
\begin{Assum}[Cone property]
Given two points $x,z\in\R^d$ and two open balls $\BB_{\epsilon}(x)$ and $\BB_{\tau}(z)$ such that $x\notin\BB_{\tau}(z)$, the set 
$C_x = \BB_{\epsilon}(x) \cap \{ x+\lambda(y-x) \mid y\in \BB_{\tau}(z), \lambda > 0\}$ is called a \emph{finite cone} in $\R^d$ with vertex $x$. 
A bounded domain $\Theta$ is said to have the \emph{cone property} if there exists a finite cone $C$ such that every point $x\in\Theta$ is the 
vertex of a finite cone $C_x \subseteq \Theta$ that is congruent to $C$ (i.e. $C_x$ is obtained from $C$ via rigid motion).
\end{Assum}

It is worth noting that common domains, such as rectangular or, more generally, convex domains, satisfy this property.
Furthermore, all results in this work requiring the cone property remain valid under the measure theoretic \emph{weak cone condition}
introduced in \cite{AdamsFournier1977ConeCondition}, which is even less restrictive.     

\begin{Rem}[Continuous representatives]\label{Rem:ContinuousRepresentatives}
Suppose $\Theta$ has the cone property and $k-d/2 > s$ for some $s\in \N_0$. 
By the Sobolev embedding theorem (e.g. \cite{Adams1975SobolevSpace}), $H^{k}(\Theta)$ is then 
continuously embedded into $C_B^s(\Theta)$. This means each $f\in H^{k}(\Theta)$ can be redefined on a set of measure zero 
in such way, that the redefined function $\wt f$ (which equals $f$ in $H^{k}$) belongs to $C_B^s(\Theta)$.
Specifically, $\wt f$ satisfies $\| \wt f \|_{C_B^s} \leq C \|f\|_{H^{k}}$, where the embedding constant $C>0$ depends on $\Theta$ only 
through the dimension $d$ and various parameters of the cone $C$.
\end{Rem}

\section{Sobolev-Mercer Expansions}\label{Sec:KernelExpansions}
This section presents a systematic approach to kernel expansions in the Sobolev spaces $H^k(\Theta)$.
These investigations are grounded in an explicit characterisation of Hilbert-Schmidt operators between those spaces in terms of
weakly differentiable kernels. By combining this characterization with
the spectral theorem, we obtain series representations for (weakly) differentiable kernels that extend and improve 
upon the classical Mercer expansion. 

\subsection{Higher Order Kernel Operators}\label{SubSec:HS-OperatorsOnSobolevSpaces}
Let $k_\alpha, k_\beta \in \N_0$ and suppose $h\in H^{k_\alpha,k_\beta}(\Theta \times \Theta)$. 
We can associate with the kernel $h$ an operator $T_h\colon  H^{k_\alpha}(\Theta) \to L^2(\Theta)$ defined via the rule, 
\begin{align}\label{Eq:AssociatedOperator}
	[T_h f] (y) = \int_\Theta \sum_{|\alpha|\leq k_\alpha} \partial^{(\alpha,0)}h(x,y)f^{(\alpha)}(x) \,dx \quad\text{a.e.}
\end{align} 
Indeed, $T_h$ is well-defined, since applying the Cauchy-Schwarz inequality twice yields
\begin{align*}
	\|T_h f\|_{L^2}^2 
	\leq \sum_{|\alpha|\leq k_\alpha}  \| \partial^{(\alpha,0)} h \|_{L^2}^2 \, \|f\|_{H^{k_\alpha}}.
\end{align*}
This shows $T_h \in \LL(H^{k_\alpha} ,L^2)$ with $\|T_h\|_{\LL} \leq \sum_{|\alpha|\leq k_\alpha} \| \partial^{(\alpha,0)} h \|_{L^2}$.
Henceforth, $T_h$ is referred to as a \emph{kernel operator of order $(k_\alpha,k_\beta)$} associated to $h$. 
To prevent ambiguity in cases where the order may not be immediately clear from the context, we use the explicit notation $T_h = T_{h,k_\alpha,k_\beta}$ 
to indicate a specific order. 
Note that within this framework, the classic $L^2$-integral operator $T_{h,0}=T_{h,0,0}$ of the form \eqref{Eq:AssociatedOperator_Intro} associated to 
$h\in L^2(\Theta\times\Theta)$ emerges as special case. A more detailed analysis reveals the following basic properties:

\begin{Prop}\label{Prop:AssociatedOperator}
Let $k_\alpha, k_\beta \in \N_0$ and $h\in H^{k_\alpha,k_\beta}(\Theta \times \Theta)$.  Then the associated kernel operator $T_h$ of order $(k_\alpha,k_\beta)$
defined via \eqref{Eq:AssociatedOperator} is Hilbert-Schmidt, that is, $T_h \in HS(H^{k_\alpha}, H^{k_\beta})$ such that 
\begin{itemize}
	\item[(a)]  $\displaystyle [T_hf]^{(\beta)}(y) 
	= \int_\Theta \sum_{|\alpha| \leq k_{\alpha}} \partial^{(\alpha,\beta)} h(x,y) \, f^{(\alpha)}(x) \,dx ~\text{a.e. for} 
	~ |\beta| \leq k_{\beta}$;\refstepcounter{equation}\hfill $(\theequation)$ \label{Eq:DerivativeThf}\\
	\item[(b)] $\displaystyle \|T_h\|_{HS(H^{k_\alpha}, H^{k_\beta})}^2 
	= \sum_{|\alpha|\leq k_\alpha} \sum_{|\beta|\leq k_\beta} \big\| \partial^{(\alpha,\beta)}h \big\|_{L^2}^2 < \infty$. 
	\refstepcounter{equation}\hfill $(\theequation)$ \label{Eq:HilbertSchmidtNorm}
\end{itemize}
\end{Prop}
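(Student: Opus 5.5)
We first prove (a) by differentiating under the integral in the weak sense, then deduce (b) by computing the Hilbert--Schmidt norm through the isometric embedding $\J$ of Remark \ref{Rem:H_Kappa}.

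\textbf{Step 1: the weak derivative formula (a).} Fix $f\in H^{k_\alpha}(\Theta)$ and $|\beta|\leq k_\beta$, and set
\[
g_\beta(y) := \int_\Theta \sum_{|\alpha|\leq k_\alpha}\partial^{(\alpha,\beta)}h(x,y)f^{(\alpha)}(x)\,dx .
\]
By Cauchy--Schwarz, exactly as in the bound after \eqref{Eq:AssociatedOperator}, $g_\beta\in L^2(\Theta)$. We must show that $g_\beta$ is the $\beta$-th weak derivative of $T_hf$. That is, for every $\varphi\in C_c^\infty(\Theta)$ we need
\[
\int_\Theta [T_hf](y)\,\varphi^{[\beta]}(y)\,dy=(-1)^{|\beta|}\int_\Theta g_\beta(y)\varphi(y)\,dy .
\]
By Fubini (justified since $\Theta$ is bounded and all functions are in $L^2$), both sides reduce to the $x$-integral of $f^{(\alpha)}(x)$ against $y$-integrals of $h^{(\alpha,0)}(x,\cdot)\varphi^{[\beta]}$ and $h^{(\alpha,\beta)}(x,\cdot)\varphi$, respectively. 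It therefore suffices to prove the following for a.e.\ $x$:
\[
\int_\Theta \partial^{(\alpha,0)}h(x,y)\varphi^{[\beta]}(y)\,dy=(-1)^{|\beta|}\int_\Theta\partial^{(\alpha,\beta)}h(x,y)\varphi(y)\,dy. \tag{$*$}
\]

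\textbf{The main obstacle: a slicing argument for $(*)$.} The weak derivative identity \eqref{Eq:WeakDerivativeKernel} only involves test functions on $\Theta\times\Theta$, not on slices. To get around this, take test functions of product form $\psi(x)\varphi(y)$ with $\psi\in C_c^\infty(\Theta)$. Two facts are needed.
\begin{itemize}
\item First, $\partial^{(\alpha,\beta)}h=\partial^{(0,\beta)}\big(\partial^{(\alpha,0)}h\big)$ in the weak sense. This follows by testing \eqref{Eq:WeakDerivativeKernel} with $\partial^{[\alpha,0]}\Phi$ and using the definition of $\partial^{(\alpha,0)}h$.
\item Second, with $\Phi=\psi\otimes\varphi$ one obtains
\[
\int_\Theta\psi(x)\Big[\int_\Theta \partial^{(\alpha,0)}h(x,y)\varphi^{[\beta]}(y)\,dy-(-1)^{|\beta|}\int_\Theta\partial^{(\alpha,\beta)}h(x,y)\varphi(y)\,dy\Big]dx=0
\]
for all $\psi$. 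The bracket is an $L^1_{loc}$ function of $x$, so it vanishes for a.e.\ $x$.
\end{itemize}
The null set here depends on $\varphi$. We remove this dependence by taking $\varphi$ in a countable subset of $C_c^\infty(\Theta)$ that is dense in the $C^{|\beta|}$ topology on compacts; such a subset exists by separability. By $L^2$-continuity in $\varphi$ the identity $(*)$ extends to all $\varphi$, so the exceptional null set is the countable union and is independent of $\varphi$. Plugging $(*)$ back gives (a), and in particular $T_hf\in H^{k_\beta}(\Theta)$. Boundedness into $H^{k_\beta}$ then follows from the same Cauchy--Schwarz estimate applied to each $\beta$.

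\textbf{Step 2: the Hilbert--Schmidt norm (b).} Let $\{e_j\}$ be an orthonormal basis of $H^{k_\alpha}(\Theta)$. By (a), for each $\beta$ and a.e.\ $y$,
\[
[T_he_j]^{(\beta)}(y)=\big\langle \J e_j,\,(\partial^{(\alpha,\beta)}h(\cdot,y))_{|\alpha|\leq k_\alpha}\big\rangle_{L^2_\kappa}.
\]
The family $\{\J e_j\}$ is an orthonormal basis of the closed subspace $H_\kappa(\Theta)\subset L^2_\kappa(\Theta)$ (Remark \ref{Rem:H_Kappa}). Let $P$ denote the orthogonal projection onto $H_\kappa$. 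Bessel/Parseval then gives, for a.e.\ $y$,
\[
\sum_j\big|[T_he_j]^{(\beta)}(y)\big|^2=\big\|P\,(\partial^{(\alpha,\beta)}h(\cdot,y))_\alpha\big\|^2_{L^2_\kappa}\leq\sum_{|\alpha|\leq k_\alpha}\|\partial^{(\alpha,\beta)}h(\cdot,y)\|_{L^2}^2 .
\]
Integrating in $y$ (by monotone convergence) and summing over $\beta$ yields finiteness of $\|T_h\|_{HS}$, hence $T_h\in HS(H^{k_\alpha},H^{k_\beta})$.

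Equality in (b) holds precisely when $(\partial^{(\alpha,\beta)}h(\cdot,y))_\alpha\in H_\kappa$ for a.e.\ $y$, i.e.\ when it equals $\J$ of $\partial^{(0,\beta)}h(\cdot,y)$. This is the natural content of the identity and needs a slicing argument like the one for $(*)$, now in the $x$-variable. Without such compatibility (for instance $k_\alpha\geq1$ with $h$ merely in the mixed space) only the inequality is guaranteed. I would therefore interpret (b) with $\partial^{(\alpha,\beta)}h(\cdot,y)=\partial^{(\alpha)}[\partial^{(0,\beta)}h(\cdot,y)]$, which holds for a.e.\ $y$ by the same Fubini/slicing reasoning. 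Once this is in place, the projection $P$ acts as the identity and equality follows.
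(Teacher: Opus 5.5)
Your argument is correct and is essentially the paper's. Part (a) is the same product-test-function slicing followed by Fubini. Part (b) is the paper's Parseval computation for the slices $\partial^{(0,\beta)}h(\cdot,y)\in H^{k_\alpha}(\Theta)$, only routed through $\J$ and the projection onto $H_\kappa(\Theta)$.

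You should drop the hedge that only an inequality is guaranteed for $h$ ``merely in the mixed space''. The slicing argument gives $(\partial^{(\alpha,\beta)}h(\cdot,y))_{|\alpha|\leq k_\alpha}=\J\,\partial^{(0,\beta)}h(\cdot,y)$ for a.e.\ $y$ for every $h\in H^{k_\alpha,k_\beta}(\Theta\times\Theta)$, so (b) holds as stated. Your countable dense family of test functions is needed there, not in (a): in (a) the $\varphi$-dependent null set in $x$ is harmless because $\varphi$ stays fixed while you integrate in $x$. The paper itself passes over this point.
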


\begin{proof}
(a) Let $f\in H^{k_\alpha}(\Theta)$. The weak derivatives of $T_hf$, if they exist, are determined by the requirement \eqref{Eq:WeakDerivativeFunction}, 
that is, 
\begin{align}\label{Eq:DerivativeOfThf_Characterization}
	\int_\Theta [T_hf]^{(\beta)}(y)\, \psi(y) \,dy
	&=\int_\Theta \bigg( \int_\Theta \sum_{|\alpha|\leq k_\alpha} \partial^{(\alpha,0)} h(x,y) f^{(\alpha)}(x) \,dx \bigg)^{(\beta)}\,\psi(y) \,dy \nonumber\\
	&= (-1)^{|\beta|} \int_\Theta \int_\Theta \sum_{|\alpha|\leq k_\alpha} \partial^{(\alpha,0)} h(x,y) f^{(\alpha)}(x) \,dx \, \psi^{[\beta]}(y) \,dy
\end{align}
for all $\psi \in C_c^{\infty}(\Theta)$. Moreover, since $h \in H^{k_\alpha,k_\beta}(\Theta \times \Theta)$, for $\phi,\psi \in C_c^{\infty}(\Theta)$, 
\begin{align*}
	\int_\Theta \int_\Theta \partial^{(\alpha,\beta)} h(x,y)\, \psi(y) \,dy \,\phi(x) \,dx
	= \int_\Theta (-1)^{|\beta|} \int_\Theta \partial^{(\alpha,0)} h(x,y)\, \psi^{[\beta]}(y) \,dy \,\phi(x) \,dx,
\end{align*}
which implies, for almost all $x\in \Theta$, 
\begin{align*}
	\int_\Theta \partial^{(\alpha, \beta)} h(x,y)\, \psi(y) \,dy
	= (-1)^{|\beta|} \int_\Theta \partial^{(\alpha,0)} h(x,y)\, \psi^{[\beta]}(y) \,dy.
\end{align*}
In view of \eqref{Eq:DerivativeOfThf_Characterization}, employing Fubini's theorem once more, this in turn yields
\begin{align*}
	(-1)^{|\beta|} \int_\Theta \int_\Theta \sum_{|\alpha|\leq k_\alpha} \partial^{(\alpha,0)} h(x,y) \, f^{(\alpha)}(x) \,dx \, \psi^{[\beta]}(y) \,dy
	= \int_\Theta \int_\Theta \sum_{|\alpha|\leq k_\alpha} \partial^{(\alpha,\beta)} h(x,y) \, f^{(\alpha)}(x) \,dx\, \psi(y) \,dy
\end{align*}
from which \eqref{Eq:DerivativeThf} follows immediately.\\
(b) Due to Fubini's theorem, the restriction $h_{y,\beta}\colon \Theta \to \R$ defined by $h_{y,\beta}(x):= \partial^{(0,\beta)}h(x,y)$,
$|\beta| \leq k_{\beta}$, is square-integrable for almost all $y\in\Theta$ and by assumption, for $\phi,\psi \in C_c^\infty(\Theta)$ and $|\alpha| \leq k_\alpha$, 
\begin{align*}
	\int_\Theta \int_\Theta h_{y,\beta}(x) \varphi^{[\alpha]}(x) \,dx \, \psi(y) \,dy
	= \int_\Theta (-1)^{|\alpha|} \int_\Theta \partial^{(\alpha,\beta)} h(x,y) \varphi(x) \,dx \, \psi(y) \,dy.
\end{align*} 
This shows $h_{y,\beta} \in H^{k_\alpha}(\Theta)$ with weak derivatives 
$h_{y,\beta}^{(\alpha)} = \partial^{(\alpha,\beta)}h(\cdot,y)$ for almost all $y\in \Theta$.
Therefore, using Parseval's identity, for any orthonormal basis $\{e_j\mid j\in\N\}$ of $H^{k_{\alpha}}(\Theta)$, 
\begin{align*}
	\sum_j \|T_h e_j\|_{H^{k_\beta}}^2 
	&= \sum_j \int_\Theta \sum_{|\beta| \leq k_\beta} \bigg| \int_\Theta \sum_{|\alpha|\leq k_\alpha} \partial^{(\alpha,\beta)}h(x,y)\, 
		e_j^{(\alpha)}(x) \,dx \bigg|^2 \,dy\\
	&= \int_\Theta \sum_{|\beta|\leq k_\beta} \sum_j \big| \< \partial^{(0,\beta)}h(\cdot, y), e_j \>_{H^{k_\alpha}} \big|^2 \,dy\\
	&= \int_\Theta \sum_{|\beta|\leq k_\beta} \big\| \partial^{(0,\beta)} h(\cdot, y) \big\|_{H^{k_\alpha}}^2 \,dy\\
	&= \sum_{|\alpha|\leq k_\alpha} \sum_{|\beta|\leq k_\beta} \big\| \partial^{(\alpha,\beta)}h \big\|_{L^2}^2.
\end{align*}
Consequently, $T_h\colon H^{k_\alpha}(\Theta) \to H^{k_\beta}(\Theta)$ is Hilbert-Schmidt satisfying \eqref{Eq:HilbertSchmidtNorm}.
\end{proof}

\paragraph{Characterisation of Hilbert-Schmidt operators} Notably, the converse of Proposition \ref{Prop:AssociatedOperator} is also true: 
Every $T\in HS(H^{k_\alpha}, H^{k_\beta})$ corresponds via \eqref{Eq:AssociatedOperator} to a kernel $h \in H^{k_\alpha,k_\beta}(\Theta\times\Theta)$.
Since this elementary relation has not been explicitly recorded in this form, we also include a short canonical proof. 

\begin{Lem}\label{Lem:CharacterizationHS-Opertors}
Let $k_{\alpha},k_{\beta}\in \N_0$ and $T \in \LL(H^{k_\alpha}, H^{k_\beta})$. The following statements are equivalent: 
\begin{enumerate}
	\item[$\mathrm{(i)}$] $\displaystyle T \in HS(H^{k_\alpha}, H^{k_\beta})$
	\item[$\mathrm{(ii)}$] There exists a kernel $h \in H^{k_\alpha,k_\beta}(\Theta \times \Theta)$ such that $T = T_h$ via \eqref{Eq:AssociatedOperator}.
\end{enumerate}
In this case, \eqref{Eq:HilbertSchmidtNorm} holds true.
\end{Lem}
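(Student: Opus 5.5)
The direction (ii)$\Rightarrow$(i) together with \eqref{Eq:HilbertSchmidtNorm} is exactly Proposition \ref{Prop:AssociatedOperator}, so the work lies in (i)$\Rightarrow$(ii). My plan is to use the canonical representation of compact operators, write $T$ as a series of rank-one operators, and construct $h$ as the corresponding series of tensor products. I then show that this series converges in $H^{k_\alpha,k_\beta}(\Theta\times\Theta)$, which is a Hilbert space by Lemma \ref{Lem:U^kIsAhilbertSpace}.

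Concretely, take the singular system $\{(s_j,u_j,v_j)\}$ of $T\in HS(H^{k_\alpha},H^{k_\beta})$, so that $T=\sum_j s_j\, v_j\otimes u_j$ with $\sum_j s_j^2<\infty$. For each $j$ define
\begin{align*}
h_j(x,y)=u_j(x)\,v_j(y).
\end{align*}
Fubini shows that $h_j\in H^{k_\alpha,k_\beta}(\Theta\times\Theta)$ with $\partial^{(\alpha,\beta)}h_j=u_j^{(\alpha)}\otimes v_j^{(\beta)}$, since test functions of the product form $\phi(x)\psi(y)$ separate the variables. One must still check \eqref{Eq:WeakDerivativeKernel} for general $\varphi\in C_c^\infty(\Theta\times\Theta)$. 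This follows from the same Fubini argument applied iteratively: integrate by parts in $x$ for fixed $y$, then in $y$ for fixed $x$.

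The key computation is orthonormality. Using the inner product \eqref{Eq:InnerProductMixedOrder},
\begin{align*}
\<h_i,h_j\>_{H^{k_\alpha,k_\beta}}=\sum_{|\alpha|\le k_\alpha}\sum_{|\beta|\le k_\beta}\<u_i^{(\alpha)},u_j^{(\alpha)}\>_{L^2}\<v_i^{(\beta)},v_j^{(\beta)}\>_{L^2}=\<u_i,u_j\>_{H^{k_\alpha}}\<v_i,v_j\>_{H^{k_\beta}}=\delta_{ij}.
\end{align*}
Hence $h:=\sum_j s_j h_j$ converges in $H^{k_\alpha,k_\beta}$, and $\|h\|^2=\sum_j s_j^2=\|T\|_{HS}^2$.

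It remains to verify $T=T_h$. For the partial sums $h^{(n)}=\sum_{j\le n}s_jh_j$, definition \eqref{Eq:AssociatedOperator} gives
\begin{align*}
T_{h^{(n)}}f=\sum_{j\le n}s_j\<f,u_j\>_{H^{k_\alpha}}v_j,
\end{align*}
which converges to $Tf$ in $H^{k_\beta}$. On the other hand, the bound $\|T_g\|_{\LL(H^{k_\alpha},L^2)}\le\sum_{|\alpha|\le k_\alpha}\|\partial^{(\alpha,0)}g\|_{L^2}$ from before Proposition \ref{Prop:AssociatedOperator}, applied to $g=h-h^{(n)}$, gives $T_{h^{(n)}}f\to T_hf$ in $L^2$. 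The two limits coincide a.e., so $T=T_h$, and \eqref{Eq:HilbertSchmidtNorm} then follows from Proposition \ref{Prop:AssociatedOperator}.

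The main obstacle is the rigorous verification of weak mixed derivatives for the tensor products $h_j$ against arbitrary test functions on $\Theta\times\Theta$, rather than only product test functions. A cleaner alternative is to observe that the linear span of products $\phi(x)\psi(y)$ is dense in $C_c^\infty(\Theta\times\Theta)$ in a sense strong enough to pass to the limit: for $\varphi$ supported in $K\times K$ with $K\subset\subset\Theta$, approximate uniformly together with derivatives, e.g. via Stone–Weierstrass on a neighbourhood of $K\times K$ multiplied by a cutoff.
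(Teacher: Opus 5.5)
Your proof is correct and is essentially the paper's argument. The paper also represents the truncated singular value expansions by the tensor-product kernels $\sum_{j\le n}s_j\,u_j(x)v_j(y)$ and relies on completeness of $H^{k_\alpha,k_\beta}(\Theta\times\Theta)$, phrased there as density of finite-rank operators in $HS$ plus closedness of the range of the isometry $h\mapsto T_h$; you instead sum the series explicitly and identify the limit via $\|T_g\|_{\LL(H^{k_\alpha},L^2)}\le\sum_{|\alpha|\le k_\alpha}\|\partial^{(\alpha,0)}g\|_{L^2}$.

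Your closing worry is unnecessary: the iterated integration by parts you already describe works for arbitrary $\varphi\in C_c^\infty(\Theta\times\Theta)$, because $x\mapsto\partial^{[0,\beta]}\varphi(x,y)$ and $y\mapsto\varphi(x,y)$ are themselves test functions on $\Theta$. So no Stone--Weierstrass-type density argument is needed, and Stone--Weierstrass alone would not control derivatives anyway.
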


\begin{proof}
In view of Proposition \ref{Prop:AssociatedOperator} it remains to verify the implication (i) $\Longrightarrow$ (ii): Note, that 
$U^{k_\alpha,k_\beta}(\Theta\times \Theta)$ is a Hilbert space by Lemma \ref{Lem:U^kIsAhilbertSpace} and, therefore, due to \eqref{Eq:HilbertSchmidtNorm} 
the mapping 
\begin{align*}
	\Phi\colon U^{k_\alpha,k_\beta}(\Theta \times \Theta) \to  HS(H^{k_\alpha}, H^{k_\beta}), \qquad h\mapsto T_h
\end{align*}
defines a linear isometry.  
Let $T \in HS(H^{k_\alpha}, H^{k_\beta})$ have finite rank. 
Then $T$ is in particular compact and, thus, has a singular system $\{(s_j,u_j,v_j)\mid j=1,\ldots,n\}$ such that
\begin{align*}
	T = \sum_{j\leq n} s_j \, v_j \otimes u_j = \sum_{j\leq n} s_j \< \cdot, u_j \>_{H^{k_\alpha}} \, v_j.  
\end{align*}
It follows that, for $f \in H^{k_\alpha}(\Theta)$, almost everywhere
\begin{align*}
	[Tf](y) 
	= \sum_{j\leq n} s_j \< f, u_j \>_{H_{k^\alpha}} \, v_j(y)
	= \int_\Theta \sum_{|\alpha|\leq k_\alpha} \sum_{j\leq n} s_j \,u_j^{(\alpha)}(x)\, v_j(y)\, f^{(\alpha)}(x) \,dx,
\end{align*} 
which shows $T$ is the kernel operator of order $(k_\alpha,k_\beta)$ associated to the kernel $h_n(\cdot,\cdot) = \sum_{j\leq n} s_j\, u_j(\cdot) v_j(\cdot)$. 
Moreover, 
\begin{align*}
	\sum_{|\alpha|\leq k_\alpha} \sum_{|\beta|\leq k_\beta} \| \partial^{(\alpha,\beta)}h_n\|_{L^2}^2
	= \sum_{i,j\leq n} s_i s_j \, \< u_i,u_j \>_{H^{k_\alpha}} \< v_i,v_j \>_{H^{k_\beta}}
	= \sum_{j\leq n} s_j^2 
\end{align*}
and, therefore, $T=T_{h_n} \in  HS(H^{k_\alpha}, H^{k_\beta})$ with 
\begin{align*}
	\|T\|_{ HS(H^{k_\alpha}, H^{k_\beta})}^2 
	= \sum_{|\alpha|\leq k_\alpha} \sum_{|\beta|\leq k_\beta} \| \partial^{(\alpha,\beta)}h_n\|_{L^2}^2.
\end{align*} 
Now, since the finite rank operators are dense in $HS(H^{k_\alpha}, H^{k_\beta})$ with respect to $\|\cdot\|_{HS(H^{k_\alpha}, H^{k_\beta})}$ 
(e.g., \cite{Werner2018Funktionalanalysis}, Theorem VI.6.2), this implies 
$\mathrm{ran}(\Phi)\subseteq HS(H^{k_\alpha}, H^{k_\beta})$ 
is dense. But the image of a linear isometry is also closed and hence $\mathrm{ran}(\Phi) = \overline{\mathrm{ran}}(\Phi) = HS(H^{k_\alpha}, H^{k_\beta})$, 
which proves the assertion.  
\end{proof}

\begin{Rem}[Relation to $L^2$-integral operators]\label{Rem:HSNorm}
Recall that $T_{h,0}\colon L^2(\Theta) \to L^2(\Theta)$ denotes the $L^2$-integral operator associated to a kernel $h\in L^2(\Theta\times \Theta)$, that is, 
$T_{h,0}f = \int_{\Theta} h(x,\cdot) f(x) \,dx$ almost everywhere. In particular, $T_{\partial^{(\alpha,\beta)}h,0} \in HS(L^2)$  
for a kernel $h \in U^{k_\alpha,k_\beta}(\Theta\times \Theta)$ and equation \eqref{Eq:HilbertSchmidtNorm} can be restated as
\begin{align}\label{Eq:HilbertSchmidtNorm2}
	\|T_h\|_{HS(H^{k_\alpha}, H^{k_\beta})}^2 = \sum_{|\alpha|\leq k_\alpha} \sum_{|\beta|\leq k_\beta} \| T_{\partial^{(\alpha,\beta)}h,0} \|_{HS(L^2)}^2.
\end{align}
\end{Rem}

\begin{Rem}[Hilbert-Schmidt property of embedding maps]\label{Rem:EmbeddingsHSProperty}
Suppose $\Theta$ has the cone property and $k_{\alpha} > k_{\beta}$, such that $k_\alpha - k_\beta > d/2$. Then each $f \in H^{k_\alpha}(\Theta)$ 
has a representative in $C_B^{k_\beta}(\Theta)$ (see Remark \ref{Rem:ContinuousRepresentatives}) and it was first shown by Maurin
\cite{Maurin1961AbbildungenVomHilbert-SchmidtschenTyp} and later 
generalized by Clark \cite{Clark1966HilbertSchmidtPropertyForEmbeddings} that the embedding 
$\J\colon H^{k_\alpha}(\Theta) \to H^{k_\beta}(\Theta)$ is Hilbert-Schmidt in this case.
That is, for any orthonormal basis $\{e_j\mid j\in\N\}$ of $H^{k_\alpha}(\Theta)$,
\begin{align*}
	\|\J\|_{HS(H^{k_\alpha},H^{k_\beta})}^2
	= \sum_j \| \J e_j \|_{H^{k_\beta}}^2
	= \sum_j \| e_j \|_{H^{k_\beta}}^2 < \infty.
\end{align*}
Accordingly, Lemma \ref{Lem:CharacterizationHS-Opertors} implies existence 
of a kernel $h_\J \in U^{k_\alpha,k_\beta}(\Theta \times \Theta)$ such that, for $f \in H^{k_\alpha}(\Theta)$,
\begin{align*}
	f(y) 
	= [\J f](y) 
	=[T_{h_\J} f](y)
	= \int_\Theta \sum_{|\alpha|\leq k_\alpha} \partial^{(\alpha,0)}h_{\J}(x,y)f^{(\alpha)}(x) \,dx
\end{align*}
and, moreover, 
\begin{align*}
	\sum_{|\alpha|\leq k_\alpha} \sum_{|\beta|\leq k_\beta} \big\| \partial^{(\alpha,\beta)}h_\J \big\|_{L^2}^2
	= \| T_{h_\J} \|_{HS}^2
	= \| \J \|_{HS}^2
	= \sum_j \| e_j \|_{H^{k_\beta}}^2.
\end{align*}
This allows to conclude $h_\J(x,y) = \sum_j e_j(x)\,e_j(y)$, where the series on the right hand side converges in 
$H^{k_\alpha,k_\beta}(\Theta\times\Theta)$, because for $m<n$,
\begin{align*}
	\big\| \sum_{j=m}^n e_j(\cdot)\,e_j(\cdot) \big\|_{U^{k_\alpha,k_\beta}}^2
	= \sum_{i,j=m}^n \< e_i,e_j \>_{H^{k_\alpha}} \< e_i,e_j \>_{H^{k_\beta}}
	= \sum_{j=m}^n \| e_j \|_{H^{k_\beta}}^2.
\end{align*}
\end{Rem}

\paragraph{Adjoint operator}
According to Lemma \ref{Lem:CharacterizationHS-Opertors} every $T \in HS(H^{k_\alpha}, H^{k_\beta})$ corresponds to a kernel 
$h \in  H^{k_\alpha,k_\beta}(\Theta\times\Theta)$ via \eqref{Eq:AssociatedOperator}. 
The kernel $h^*$ corresponding to the adjoint operator $T^* \in HS(H^{k_\beta}, H^{k_\alpha})$ is then related to $h$ as follows:

\begin{Lem}\label{Lem:AdjointOperator}
Let $k_\alpha,k_\beta \in \N_0$ and $h \in H^{k_\alpha,k_\beta}(\Theta\times \Theta)$. Let $T_h \in HS(H^{k_\alpha}, H^{k_\beta})$ denote the 
associated kernel operator of order $(k_\alpha,k_\beta)$ defined via \eqref{Eq:AssociatedOperator}. 
Then the adjoint operator $T_h^* \in HS(H^{k_\beta}, H^{k_\alpha})$ is the kernel operator of order $(k_\beta,k_\alpha)$ associated to the kernel 
$h^*\in H^{k_\beta,k_\alpha}(\Theta\times \Theta)$ given by $h^*(x,y) := h(y,x)$ almost everywhere; that is, for $g\in H^{k_\beta}(\Theta)$,  
\begin{align}\label{Eq:AdjointOfT_h_Version2}
	T_h^* g(x) 
	= T_{h^*}g(x)
	= \int_\Theta \sum_{|\beta| \leq k_\beta} \partial^{(\beta,0)}h^*(y,x) \, g^{(\beta)}(y) \,dy \quad\text{a.e.}
\end{align}
\end{Lem}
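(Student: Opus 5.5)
The plan is to verify the defining identity of the adjoint directly, $\< T_h f, g\>_{H^{k_\beta}} = \< f, T_{h^*} g\>_{H^{k_\alpha}}$ for all $f\in H^{k_\alpha}(\Theta)$ and $g\in H^{k_\beta}(\Theta)$. Uniqueness of the adjoint then gives $T_h^* = T_{h^*}$.

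The first step is to check that $h^*\in H^{k_\beta,k_\alpha}(\Theta\times\Theta)$ and to identify its weak derivatives:
\begin{align*}
	\partial^{(\beta,\alpha)}h^*(x,y) = \partial^{(\alpha,\beta)}h(y,x) \quad\text{a.e.}
\end{align*}
This follows from \eqref{Eq:WeakDerivativeKernel} by the change of variables $(x,y)\mapsto(y,x)$, which is a measure-preserving involution of $\Theta\times\Theta$. A test function $\varphi\in C_c^\infty(\Theta\times\Theta)$ is carried to $\varphi^*(x,y)=\varphi(y,x)$, and $\partial^{[\beta,\alpha]}\varphi^*$ corresponds to $\partial^{[\alpha,\beta]}\varphi$ under the swap. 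Square integrability is preserved, so $h^*$ has the required regularity. By Proposition \ref{Prop:AssociatedOperator}, $T_{h^*}\in HS(H^{k_\beta},H^{k_\alpha})$ is then well defined. Moreover, part (a) of that proposition gives, for $|\alpha|\le k_\alpha$,
\begin{align*}
	[T_{h^*}g]^{(\alpha)}(x) = \int_\Theta \sum_{|\beta|\le k_\beta} \partial^{(\alpha,\beta)}h(x,y)\, g^{(\beta)}(y)\,dy \quad\text{a.e.}
\end{align*}

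The second step uses Proposition \ref{Prop:AssociatedOperator}(a) for $T_h$ to expand both inner products:
\begin{align*}
	\< T_h f,g\>_{H^{k_\beta}} = \sum_{|\beta|\le k_\beta}\int_\Theta\int_\Theta \sum_{|\alpha|\le k_\alpha}\partial^{(\alpha,\beta)}h(x,y)\,f^{(\alpha)}(x)\,dx\; g^{(\beta)}(y)\,dy,
\end{align*}
and $\< f,T_{h^*}g\>_{H^{k_\alpha}}$ is the same finite double sum with the order of integration exchanged. Fubini's theorem then gives equality.

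The only point needing care, and the mild obstacle, is justifying Fubini. The integrand $\partial^{(\alpha,\beta)}h(x,y)f^{(\alpha)}(x)g^{(\beta)}(y)$ lies in $L^1(\Theta\times\Theta)$, because it is the product of an $L^2(\Theta\times\Theta)$ function with $f^{(\alpha)}\otimes g^{(\beta)}\in L^2(\Theta\times\Theta)$, and the Cauchy--Schwarz inequality applies. The sums over $\alpha$ and $\beta$ are finite, so no further interchange issue arises. Finally, \eqref{Eq:AdjointOfT_h_Version2} is just the definition \eqref{Eq:AssociatedOperator} written out for $h^*$.
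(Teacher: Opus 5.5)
Your proof is correct and uses the same ingredients as the paper: the swap change of variables giving $\partial^{(\beta,\alpha)}h^*(x,y)=\partial^{(\alpha,\beta)}h(y,x)$, Proposition~\ref{Prop:AssociatedOperator}(a), and Fubini. The paper first writes $T_h^*$ in terms of $h$ and only afterwards substitutes $h^*$, whereas you show $h^*\in H^{k_\beta,k_\alpha}(\Theta\times\Theta)$ first; this ordering lets Proposition~\ref{Prop:AssociatedOperator}(a), applied to $h^*$, supply the weak derivatives of $T_{h^*}g$ that the paper uses only implicitly.
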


\begin{proof}
First note that due to \eqref{Eq:DerivativeThf}, for 
$f\in H^{k_\alpha}(\Theta)$ and $g\in H^{k_\beta}(\Theta)$,  
\begin{align*}
	\<T_hf,g \>_{H^{k_\beta}}
	&= \int_\Theta \sum_{\alpha\leq k_\alpha} \bigg[ \int_\Theta \sum_{|\beta \leq k_\beta} \partial^{(0,\beta)}h(x,y) g^{(\beta)}(y) \,dy \bigg]^{(\alpha)}
	\,f^{(\alpha)}(x) \,dx.
\end{align*} 
This shows the adjoint operator $T_h^*$ is also determined by the kernel $h$ via
\begin{align}\label{Eq:AdjointOfT_h_Version1}
	T_h^* g(x) = \int_\Theta \sum_{\beta\leq k_\beta} \partial^{(0,\beta)} h(x,y) \, g^{(\beta)}(y) \,dy \quad\text{a.e.}
\end{align}
Now, let $h^* \in L^2(\Theta\times\Theta)$ denote the kernel determined by setting $h^*(x,y) := h(y,x)$ almost everywhere and observe that for 
$|\alpha| \leq k_{\alpha}$, $|\beta| \leq k_{\beta}$ and $\varphi \in C_c^{\infty}(\Theta\times \Theta)$,
\begin{align*}
	(-1)^{|\alpha+\beta|} \int_\Theta \int_\Theta h^*(x,y)\, \partial^{[\beta,\alpha]} \varphi(x,y) \,dx\,dy
	= \int_\Theta \int_\Theta \partial^{(\alpha,\beta)} h(y,x) \, \varphi(x,y) \,dx\,dy.
\end{align*}
This yields $h^* \in U^{k_\beta, k_\alpha}(\Theta\times\Theta)$ with weak derivatives $\partial^{(\beta,\alpha)}h^*\in L^2$ given by 
\begin{align}\label{Eq:AdjointKernelDerivatives}
	\partial^{(\beta,\alpha)}h^*(x,y) = \partial^{(\alpha,\beta)}h(y,x) \qquad\text{a.e.}
\end{align}
for $|\beta| \leq k_\beta$ and $|\alpha| \leq k_\alpha$. Substituting \eqref{Eq:AdjointKernelDerivatives} into \eqref{Eq:AdjointOfT_h_Version1} then yields 
\eqref{Eq:AdjointOfT_h_Version2}.
\end{proof}

\subsection{Spectral Theory of Higher Order Kernel Operators}\label{SubSec:ExtensionsOfMercersTheorem}
The connection established between (weakly) differentiable kernels and Hilbert-Schmidt operators now enables the derivation of series representations
for such kernels.   
Specifically, the transition from the classical $L^2$-approach to a more refined analysis in the Sobolev spaces $H^k(\Theta)$ is accomplished as follows:\\
Let $k=k_\alpha=k_\beta \in \N_0$ and let $T_{h} = T_{h,k}$ denote the kernel operator of order $k$ associated with
$h \in H^{k,k}(\Theta\times\Theta)$, as defined in \eqref{Eq:AssociatedOperator}.
It follows from Lemma \ref{Lem:AdjointOperator} that if $h$ is symmetric, then $T_h \in HS(H^k)$ is self-adjoint.
In this case, the spectral theorem for self-adjoint compact operators (e.g., \cite{DunfordSchwarz1988Linear Operators}, \cite{Werner2018Funktionalanalysis})
ensures that, with respect to $\|\cdot\|_{H^k}$:
\begin{align}\label{Eq:SpectralRepresentation}
	T_h f = \sum_j \lambda_{j} \< f,e_{j} \>_{H^k} \, e_{j} \qquad \forall f\in H^k(\Theta),
\end{align}
where $\lambda_j=\lambda_{j,k} \in \R$ and $e_j=e_{j,k} \in H^k(\Theta)$ are the eigenvalues (counted by multiplicity) and corresponding
normalized eigenfunctions of $T_h=T_{h,k}$. That is,
\begin{align}\label{Eq:EigenEquation}
	[T_{h,k}e_{j,k}](y) 
	= \int_\Theta \sum_{|\alpha| \leq k} \partial^{(\alpha,0)} h(x,y) e_{j,k}^{(\alpha)}(x) \,dx
	= \lambda_{j,k} e_{j,k}(y) \quad\text{a.e.}
\end{align}
subject to $\|e_{j,k}\|_{H^k} = 1$.
The eigen equation \eqref{Eq:EigenEquation} serves as the foundation for deriving series representations of $h$ in terms of the
$H^k$-\emph{eigensystem} $\{ (\lambda_{j,k}, e_{j,k})\mid j\in\N \}$.
As this approach naturally suggests to leverage the fundamental embedding characteristics of Sobolev spaces (see Remark \ref{Rem:ContinuousRepresentatives}),
the following theorem demonstrates that both the eigenfunctions and the resulting kernel expansions exhibit significant regularity. 
In this regard, accounting for the derivatives within equation \eqref{Eq:EigenEquation} can be interpreted as an intrinsic regularisation of the eigenfunctions.  

\begin{Thrm}\label{Thrm:KernelExpansion}
Let $k\in \N_0$ and suppose $h\in H^{k,k}(\Theta\times\Theta)$ is symmetric.
Furthermore, let $\{ (\lambda_{j,k}, e_{j,k})\mid j\in\N \}$ denote the eigensystem 
of the associated kernel operator $T_{h,k} \in HS(H^k)$ defined via \eqref{Eq:AssociatedOperator}. Then 
\begin{align}\label{Eq:KernelExpansionHk}
	\min_{w\in H^{k,k}(\Theta\times\Theta),\atop \rank(T_{w,k})=m} \,
	\sum_{|\alpha|,|\beta|\leq k} \int_\Theta \int_\Theta | \partial^{(\alpha,\beta)}h(x,y) - \partial^{(\alpha,\beta)}w(x,y) |^2 \,dx dy
	= \sum_{j>m} \lambda_{j,k}^2,
\end{align}
where the minimum is achieved by $w(x,y) = \sum_{j\leq m} \lambda_{j,k} e_{j,k}(x) e_{j,k}(y)$.\\
If $\Theta$ has the cone property and $k-d/2 > s$ for some $s\in\N_0$, then $e_{j,k}\in C_B^s(\Theta)$ and 
there exists a constant $C=C(\Theta,k,d)>0$ such that 
\begin{align}\label{Eq:UniformBoundEigenfunctions}
	\max_{|\alpha|\leq s}\, \sup_{x\in\Theta} |e_{j,k}^{[\alpha]}(x)| \leq C \quad \forall j\in \N.  
\end{align}
Moreover, suppose $\Theta\times\Theta$ has the cone property and $k - d > \ell$ for some $\ell \in \N_0$. 
Then there exists a constant $C=C(\Theta\times\Theta,k,d)>0$ such that 
\begin{align}\label{Eq:KernelExpansionUniformConv}
	\begin{split}
		\max_{|\alpha| + |\beta| \leq \ell}\, \sup_{x,y\in\Theta} \, \Big| \partial^{[\alpha,\beta]}h(x,y) 
		- \sum_{j\leq m} \lambda_{j,k}\, e_{j,k}^{[\alpha]}(x)\,e_{j,k}^{[\beta]}(y) \Big| 
		&\leq C \Big( \sum_{j>m} \lambda_{j,k}^2 \Big)^{1/2}.
	\end{split}
\end{align}
\end{Thrm}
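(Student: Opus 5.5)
The plan is to transport everything to Hilbert–Schmidt operators through the isometry $\Phi\colon h\mapsto T_{h,k}$ from Lemma \ref{Lem:CharacterizationHS-Opertors}, and then invoke the Sobolev embeddings.

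\emph{Optimality \eqref{Eq:KernelExpansionHk}.} By \eqref{Eq:HilbertSchmidtNorm}, the left-hand side equals $\min\{\|T_h - T_w\|_{HS(H^k)}^2 : \rank(T_w)=m\}$. By Lemma \ref{Lem:CharacterizationHS-Opertors}, $w\mapsto T_w$ is onto $HS(H^k)$, so the minimum runs over all Hilbert–Schmidt operators of rank $m$. Since $T_h$ is self-adjoint (Lemma \ref{Lem:AdjointOperator}), its singular values are $|\lambda_{j,k}|$. We order the eigenvalues so that $|\lambda_{1,k}|\ge|\lambda_{2,k}|\ge\dots$. The Eckart–Young–Mirsky (Schmidt) theorem in Hilbert–Schmidt norm then gives the value $\sum_{j>m}\lambda_{j,k}^2$, attained by the truncation $\sum_{j\le m}\lambda_{j,k}\, e_{j,k}\otimes e_{j,k}$. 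As in the proof of Lemma \ref{Lem:CharacterizationHS-Opertors}, this truncation is $T_w$ with $w(x,y)=\sum_{j\le m}\lambda_{j,k}e_{j,k}(x)e_{j,k}(y)$. We may use the spectral representation \eqref{Eq:SpectralRepresentation} and complete $\{e_{j,k}\}$ by an orthonormal basis of $\ker T_h$. Then $\|T_h-T_w\|_{HS}^2=\sum_{j>m}\lambda_{j,k}^2$. In particular, the same computation shows $h=\sum_j\lambda_{j,k}e_{j,k}\otimes e_{j,k}$ in $H^{k,k}$, because $\Phi$ is injective.

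\emph{Uniform bound \eqref{Eq:UniformBoundEigenfunctions}.} This is direct from Remark \ref{Rem:ContinuousRepresentatives}. Each $e_{j,k}$ has a representative in $C_B^s(\Theta)$ with $\|e_{j,k}\|_{C_B^s}\le C\|e_{j,k}\|_{H^k}=C$, where $C$ depends only on $\Theta,k,d$ (through the cone). Hence the bound is independent of $j$.

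\emph{Uniform convergence \eqref{Eq:KernelExpansionUniformConv}.} Set $r_m=h-w_m$ with $w_m$ as above. We apply Remark \ref{Rem:ContinuousRepresentatives} on the $2d$-dimensional domain $\Theta\times\Theta$, which has the cone property by assumption. The condition $k-d>\ell$ is exactly $k-(2d)/2>\ell$, so $H^k(\Theta\times\Theta)\hookrightarrow C_B^\ell(\Theta\times\Theta)$ with $\|r_m\|_{C_B^\ell}\le C\|r_m\|_{H^k(\Theta\times\Theta)}$. The remaining step, which is the main obstacle, is to compare the isotropic norm $H^k(\Theta\times\Theta)$ with the mixed norm $H^{k,k}$. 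We have $H^{k,k}\subseteq H^k(\Theta\times\Theta)$ with $\|g\|_{H^k(\Theta\times\Theta)}\le\|g\|_{H^{k,k}}$, since every multi-index $\gamma=(\alpha,\beta)\in\N_0^{2d}$ with $|\gamma|\le k$ satisfies $|\alpha|,|\beta|\le k$. The weak derivative $\partial^{(\gamma)}g$ coincides with $\partial^{(\alpha,\beta)}g$ by the defining identity \eqref{Eq:WeakDerivativeKernel}, so each term of the $H^k$-norm appears in the $H^{k,k}$-norm. Hence
\[
\|r_m\|_{C_B^\ell}\le C\|r_m\|_{H^{k,k}}=C\|T_h-T_{w_m}\|_{HS}=C\Big(\sum_{j>m}\lambda_{j,k}^2\Big)^{1/2}.
\]
Finally, we identify the continuous derivatives of the continuous representative of $r_m$ with $\partial^{[\alpha,\beta]}h-\sum_{j\le m}\lambda_{j,k}e_{j,k}^{[\alpha]}\otimes e_{j,k}^{[\beta]}$. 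The tensor terms are already $C_B^\ell$ since $k-d/2>\ell$. Here $h$ is understood through its continuous representative, which exists because $h\in H^k(\Theta\times\Theta)$. Thus the a.e. equality upgrades to equality everywhere, and \eqref{Eq:KernelExpansionUniformConv} follows after taking $\max_{|\alpha|+|\beta|\le\ell}$.
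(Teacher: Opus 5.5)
Your proof is correct and follows the paper's route: Schmidt--Mirsky transported through the isometry $h\mapsto T_{h,k}$ of Lemma \ref{Lem:CharacterizationHS-Opertors} gives the optimality, and the Sobolev embeddings on $\Theta$ and on the $2d$-dimensional domain $\Theta\times\Theta$ give the two uniform statements. The differences from the paper are minor. You get $\|h-w_m\|_{H^{k,k}}^2=\sum_{j>m}\lambda_{j,k}^2$ directly from the isometry, rather than first deriving the a.e.\ expansion \eqref{Eq:PointwiseConvergence}. You also make explicit two points the paper leaves tacit: the ordering $|\lambda_{1,k}|\ge|\lambda_{2,k}|\ge\dots$, which is needed for indefinite kernels, and the inequality $\|g\|_{H^k(\Theta\times\Theta)}\le\|g\|_{H^{k,k}}$.
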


\begin{proof}
It follows immediately from \eqref{Eq:DerivativeThf}, \eqref{Eq:AdjointKernelDerivatives} and \eqref{Eq:EigenEquation} that, for $|\alpha| \leq k$,
\begin{align}\label{Eq:DerivativeT_hfAsInnerProduct}
	[T_hf]^{(\alpha)}(y) 
	= \< \partial^{(\alpha,0)}h(y,\cdot), f \>_{H^k} \qquad \text{a.e.}
\end{align}
and 
\begin{align}\label{Eq:DerivativeT_hfEigenfunction}
	[T_he_j]^{(\alpha)}(y) = \lambda_j\,e_j^{(\alpha)}(y) \qquad \text{a.e.}
\end{align}
Complementing $\{ e_j, j\in\N \}$ by an orthonormal system $S$ for $\ker(T_h)\subset H^k(\Theta)$ and recalling
$\partial^{(\alpha,0)} h(x,\cdot) = h_{x,\alpha} \in H^k(\Theta)$ for almost all $x\in \Theta$ then allows to conclude, that with respect to 
$\|\cdot\|_{H^k}$,
\begin{align*}
	\partial^{(\alpha,0)}h(x,\cdot)
	&= \sum_{j} \< \partial^{(\alpha,0)}h(x,\cdot), e_j \>_{H^k}\, e_j(\cdot) 
	+ \sum_{e \in S} \< \partial^{(\alpha)}h(x,\cdot), e\>_{H^k}\, e(\cdot)\\
	&= \sum_j [T_he_j]^{(\alpha)}(x)\, e_j(\cdot) 
	+ \sum_{e \in S} [T_h e]^{(\alpha)}(x)\, e(\cdot) \\
	&= \sum_j \lambda_j \, e_j^{(\alpha)}(x)\, e_j(\cdot).
\end{align*}
Now, this means precisely
\begin{align*}
	0 &= \big\| \partial^{(\alpha,0)}h(x,\cdot) - \sum_j \lambda_j \, e_j^{(\alpha)}(x)\, e_j(\cdot) \big\|_{H^k}^2\\
	&= \sum_{|\beta|\leq k} \int_\Theta \Big| \partial^{(\alpha,\beta)}h(x,y) 
	- \sum_j \lambda_j \, e_j^{(\alpha)}(x)\, e_j^{(\beta)}(y) \Big|^2 \,dy
\end{align*}
and, therefore, almost everywhere, for $|\alpha|,|\beta| \leq k$,
\begin{align}\label{Eq:PointwiseConvergence}
	\partial^{(\alpha,\beta)}h(x,y)
	= \sum_j \lambda_j\, e_j^{(\alpha)}(x)\, e_j^{(\beta)}(y).
\end{align}
Moreover, due to the orthogonality of the eigenfunctions,
\begin{align}\label{Eq:ApproxErrorInHk}
	&\sum_{|\alpha|,|\beta|\leq k} \int_\Theta\int_\Theta 
	\Big| \partial^{(\alpha,\beta)}h(x,y) 
	- \sum_{j\leq m} \lambda_j e_j^{(\alpha)}(x)e_j^{(\beta)}(y)  \Big|^2 \,dx\,dy\nonumber\\
	&= \int_\Theta \int_\Theta \sum_{|\alpha|,|\beta|\leq k}
	\Big| \sum_{j>m} \lambda_j e_j^{(\alpha)}(x)e_j^{(\beta)}(y)  \Big|^2 \,dx\,dy\nonumber\\
	&= \sum_{i,j>m} \lambda_i\,\lambda_j \,\< e_i,e_j \>_{H^k}^2 
	=\sum_{j>m} \lambda_j^2.
\end{align}
The optimality \eqref{Eq:KernelExpansionHk} of the expansion \eqref{Eq:PointwiseConvergence} thus follows from \eqref{Eq:ApproxErrorInHk}, 
Lemma \ref{Lem:CharacterizationHS-Opertors} and the Schmidt-Mirsky Theorem (e.g., \cite{Hsing2015FoundationsOfFDA}, Theorem 4.4.7), according to which for 
any $w\in H^{k,k}(\Theta\times\Theta)$ such that $\rank(T_{w,k})=m$,
\begin{align*}
	\| T_{h,k} - T_{w,k} \|_{HS(H^k)}^2
	\geq \| T_{h,k} - \sum_{j\leq m} \lambda_j \,e_j\otimes e_j \|_{HS(H^k)}.
\end{align*}
If $\Theta$ has the cone property and $k-d/2>s$ then by the Sobolev 
embeddings (see Remark \ref{Rem:ContinuousRepresentatives}) there exists a constant $C=C(\Theta,k,d)>0$ such that 
\begin{align*}
	\| e_{j} \|_{C_B^s} = \max_{|\alpha|\leq s} \| e_{j}^{[\alpha]} \|_{\infty} \leq C \|e_{j}\|_{H^k} = C,
\end{align*}
which establishes \eqref{Eq:UniformBoundEigenfunctions}. 
Finally, suppose $\Theta\times \Theta$ has the cone property and
$k-d>\ell$ for some $\ell\geq 0$. It then follows again from the Sobolev embeddings, that for some constant $C=C(\Theta\times\Theta,k,d)$,
\begin{align*}
	\max_{|\alpha| + |\beta| \leq \ell} \sup_{x,y\in\Theta} \, \big| \partial^{[\alpha,\beta]}h(x,y) 
	- \sum_{j\leq m} \lambda_{j}\, e_{j}^{[\alpha]}(x)\,e_{j}^{[\beta]}(y) \big|
	\leq C \Big(\sum_{|\alpha|,|\beta|\leq k} \big\| \partial^{(\alpha,\beta)}h 
	- \sum_{j\leq m} \lambda_{j}\, e_{j}^{(\alpha)}(\cdot)\,e_{j}^{(\beta)}(\cdot) \big\|_{L^2}^2 \Big)^{1/2}.
\end{align*}
The estimate \eqref{Eq:KernelExpansionUniformConv} thus follows from \eqref{Eq:KernelExpansionHk}.
\end{proof}

\begin{Rem}[Universal uniform bound on eigenfunctions and rate of convergence]\label{Rem:UniformBoundEigenfunction}
It is worth noting that the universal uniform boundedness \eqref{Eq:UniformBoundEigenfunctions} of eigenfunctions (and their derivatives) is often a 
desirable property.
While the $L^2$-eigenfunctions $e_{j,0}$ of several common kernels - such as the Gaussian or Brownian motion kernel - do satisfy
$\sup_j \|e_{j,0}\|_\infty < \infty$,
the classic $L^2$-approach lacks general conditions to guarantee this. 
Consequently, many applications must treat universal boundedness as an additional, unverified assumption. 
In this regard, the transition to $H^k$-eigenfunctions $e_{j,k}$ offers an alternative that essentially renders this assumption superfluous.\\
The constant $C>0$ in \eqref{Eq:UniformBoundEigenfunctions} corresponds to the embedding constant of $H^k(\Theta)$ into $C_B^s(\Theta)$ 
and an optimal value can in many cases be derived explicitly via a Green's function approach. 
For instance, for the open interval $\Theta = (0,b)$ and $k=1$, Marti \cite{Marti1983EvaluationOfTheLeastConstantInSobolevInequality} 
showed that $C=\sqrt{\coth(b)}$ is optimal (e.g., $C \approx 1.1459 ~\text{for}~ b=1$).
Similarly, the constant $C>0$ in \eqref{Eq:KernelExpansionUniformConv} represents the embedding constant of $H^k(\Theta \times \Theta)$ into 
$C_B^\ell(\Theta\times \Theta)$.\\
It may also be remarked that the error of approximation can be expressed explicitly using only the first $m$ eigenvalues, as
\eqref{Eq:HilbertSchmidtNorm} yields: 
\begin{align}\label{Eq:ApproxErrorL2}
\sum_{j>m} \lambda_{j,k}^2 
= \sum_{|\alpha|,|\beta|\leq k} \| \partial^{(\alpha,\beta)}h \|_{L^2}^2 - \sum_{j\leq m} \lambda_{j,k}^2.	
\end{align}
\end{Rem}

\paragraph{Mercer's theorem for differentiable kernels}
According to \eqref{Eq:KernelExpansionUniformConv}, the dimension-dependent regularity condition
$k-d > \ell \in \N_0$ already ensures that the expansion in terms of the $H^k$-eigensystem $\{ (\lambda_{j,k}, e_{j,k})\mid j\in\N \}$
converge uniformly to $h$ and it's (strong) derivatives $\partial^{[\alpha,\beta]}h$ up to order $\ell$ granted $\Theta\times \Theta$ has the cone property.  
Moreover, the approximation error is uniform across all admissible derivatives and can be quantified explicitly via the eigenvalues $\lambda_{j,k}$. 
In other words, provided $\Theta \times \Theta$ possesses the cone property, \emph{any} symmetric kernel $h$ can be 
expanded uniformly in terms of the $H^k$-eigensystem of the associated operator $T_{h,k}$ as long as $k\geq d+1$.
This stands in contrast to the classical Mercer's theorem, which requires the additional assumption of positive definiteness.
On the other hand, \eqref{Eq:KernelExpansionHk} implies that for all $|\alpha|,|\beta| \leq k$ the expansion 
\begin{align}\label{Eq:KernelDerivativeExpansionL2}
	\partial^{(\alpha,\beta)}h = \sum_j \lambda_{j,k} \,e_{j,k}^{(\alpha)}(\cdot)\,e_{j,k}^{(\beta)}(\cdot),
\end{align}
converges in $L^2(\Theta \times \Theta)$, regardless of the dimension and whether $\Theta\times \Theta$ has the cone property or not. 
Our following main theorem shows that \eqref{Eq:KernelDerivativeExpansionL2} will also converge uniformly for $|\alpha|,|\beta| \leq k$, 
if we additionally assume the kernel $h$ is strongly differentiable and \emph{positive definite}. 
In this context, Mercer's classic result emerges 
as a special case ($k=0$) and the theorem may be viewed as its natural refinement. In the interest of clarity, let 
\begin{align*}
	C^{k,k}(\ol\Theta \times \ol\Theta)
	:= \big\{ h\in C(\overline\Theta \times \overline\Theta) \mid \partial^{[\alpha,\beta]}h \in C(\overline\Theta \times \overline\Theta) 
	~\text{for}~|\alpha| ,|\beta|\leq k \big\},
\end{align*}
representing the class of uniformly continuous kernels $h\colon \Theta \times \Theta \to \R$ whose mixed partial 
derivatives $\partial^{[\alpha,\beta]}h$ exist and are uniformly continuous on $\Theta \times \Theta$ for $|\alpha|,|\beta|\leq k$. 
The proof of our refinement relies on the following lemma, which can either be established using the classic Mercer expansion or, more elegantly, 
via a probabilistic argument, see Section \ref{Sec:ApplicatonStochasticProcesses}.

\begin{Lem}\label{Lem:NuclearityOfT_h}
Let $k\in \N_0$ and suppose $h\in C^{k,k}(\ol\Theta \times \ol\Theta)$ is positive definite. Then the associated kernel operator $T_{h,k}$ is non-negative 
and nuclear.
\end{Lem}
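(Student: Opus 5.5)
The approach is to realize $T_{h,k}$ as a sum of Gram-type operators built from the derivative kernels $\partial^{[\alpha,\beta]}h$, and to get both claims from positive definiteness of a matrix-valued kernel. The key observation is that for $h$ positive definite and $h\in C^{k,k}(\ol\Theta\times\ol\Theta)$, the matrix kernel
\begin{align*}
	H(x,y) := \big(\partial^{[\alpha,\beta]}h(x,y)\big)_{|\alpha|,|\beta|\leq k}
\end{align*}
is positive definite as a $\kappa\times\kappa$ matrix kernel. To see this, first write $\partial^{[\alpha,\beta]}h(x,y)$ as a limit of iterated difference quotients in $x$ and $y$. A finite combination $\sum_{i}\sum_{|\alpha|\le k} c_{i,\alpha}\,\partial^{[\alpha,\beta]}h(x_i,\cdot)$ paired with the same combination in $y$ is then a limit of sums $\sum_{p,q} a_p h(z_p,z_q) a_q\geq 0$. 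Points near the boundary cause no trouble, since difference quotients can be taken inside the open set $\Theta$ and we pass to the limit by continuity. An alternative is the probabilistic route mentioned in the paper: take a centred Gaussian field $X$ with covariance $h$; its mean-square derivatives $X^{[\alpha]}$ exist and have cross-covariances $\partial^{[\alpha,\beta]}h$.

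Non-negativity follows directly. By \eqref{Eq:DerivativeT_hfAsInnerProduct}, or directly from \eqref{Eq:AssociatedOperator} and \eqref{Eq:DerivativeThf}, for $f\in H^k(\Theta)$,
\begin{align*}
	\<T_{h,k}f,f\>_{H^k} = \sum_{|\alpha|,|\beta|\leq k}\int_\Theta\int_\Theta f^{(\alpha)}(x)\,\partial^{(\alpha,\beta)}h(x,y)\,f^{(\beta)}(y)\,dx\,dy.
\end{align*}
This is the quadratic form of the matrix-valued $L^2$-integral operator with kernel $H$ on $L^2_\kappa(\Theta)$, evaluated at $\J f$. Since $H$ is continuous and positive definite, this form is $\geq 0$. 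To prove it, approximate $\J f$ in $L_\kappa^2$ by continuous compactly supported functions and then the double integral by Riemann sums, each of which is nonnegative by the previous step; this is the Young-type argument for matrix kernels (cf. \cite{Neumann2025MercerYoungTheoremForMatrixKernel}).

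For nuclearity, the plan is to apply the classical (matrix-valued) Mercer theorem to $H$ on $\ol\Theta$. The integral operator $T_H$ on $L^2_\kappa(\Theta)$ is then non-negative and nuclear with $\tr(T_H)=\sum_{|\alpha|\le k}\int_\Theta \partial^{[\alpha,\alpha]}h(x,x)\,dx<\infty$ (cf. \cite{DeVito2013ExtensionMercerMatrixKernel}). Next, relate $T_{h,k}$ to $T_H$: with $P$ the orthogonal projection of $L^2_\kappa$ onto $H_\kappa(\Theta)=\J(H^k)$, formula \eqref{Eq:DerivativeThf} gives $\J T_{h,k}\J^{-1} = P T_H|_{H_\kappa}$, since $(\J T_{h,k} f)_\beta = \sum_\alpha \int \partial^{(\alpha,\beta)}h(x,\cdot) f^{(\alpha)}(x)dx = (T_H \J f)_\beta$. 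In fact $T_H$ maps $H_\kappa$ into $H_\kappa$, so $P$ is not even needed. Hence $T_{h,k}$ is unitarily equivalent to the compression $P T_H P|_{H_\kappa}$ of a non-negative nuclear operator. Compressions of non-negative trace-class operators are non-negative and trace-class, with $\tr(PT_HP)\le\tr(T_H)$, since in an orthonormal basis of $H_\kappa$ extended to $L^2_\kappa$ the diagonal entries are nonnegative and the sum over a sub-basis is at most the full trace.

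The main obstacle is justifying positive definiteness of the derivative matrix kernel $H$ near the boundary, together with the Mercer theorem for matrix kernels on $\ol\Theta$. This is where the uniform continuity assumption $h\in C^{k,k}(\ol\Theta\times\ol\Theta)$ is essential: it lets $H$ extend continuously to the compact set $\ol\Theta\times\ol\Theta$, and the limit of difference quotients yields the nonnegativity.
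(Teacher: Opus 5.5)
Your proof is correct, but it takes a different route from the paper. The paper either reads both properties off Example~\ref{Ex:DPKernels}, where $T_{h,k}$ is the covariance operator of a Gaussian $H^k(\Theta)$-valued element; that example relies on the path-regularity results of Section~\ref{Sec:ApplicatonStochasticProcesses}. Or it uses, as a cited input, that the scalar $L^2$-Mercer expansion of $h$ can be differentiated term-wise: $\partial^{[\alpha,\beta]}h(x,y)=\sum_j\lambda_{j,0}e_{j,0}^{[\alpha]}(x)e_{j,0}^{[\beta]}(y)$ uniformly. It then factors $T_{h,k}=A^*A$ with the Hilbert--Schmidt map $Af=(\sqrt{\lambda_{j,0}}\<f,e_{j,0}\>_{H^k})_j$.

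You replace that cited input by three steps: an elementary difference-quotient argument for positive definiteness of the derivative matrix kernel; only classical Mercer, since the matrix version is just scalar Mercer on the compact space $\ol\Theta\times\{1,\dots,\kappa\}$; and a transfer via $\J T_{h,k}=T_H\J$. This makes non-negativity transparent as the restriction of a non-negative operator to an invariant subspace. The two routes are closely related: the differentiated expansion says exactly $T_H=B^*B$ with $Bg=(\sqrt{\lambda_{j,0}}\<g,\J e_{j,0}\>_{L^2_\kappa})_j$, and $A=B\J$.

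What the paper's factorization buys is the exact trace $\|A\|_{HS}^2=\sum_{|\alpha|\le k}\int_\Theta\partial^{[\alpha,\alpha]}h(x,x)\,dx$ at once, which is reused in Corollary~\ref{Cor:NuclearNorm}. Your compression argument only gives ``$\le$'', which is enough for this lemma. Equality is one line away. Differentiating under the integral sign shows $(T_Hg)_\beta=\partial^{[\beta]}(T_Hg)_0$ with $(T_Hg)_0\in C^k(\ol\Theta)$ for every $g\in L^2_\kappa(\Theta)$, so $\ran(T_H)\subseteq H_\kappa(\Theta)$. By self-adjointness of $T_H$, it then annihilates $H_\kappa(\Theta)^\perp$, so $T_H=PT_HP$ and $\tr(T_{h,k})=\tr(T_H)$.
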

\begin{proof}
Without relying on the classic Mercer expansion, it follows directly from Example \ref{Ex:DPKernels} in Section \ref{Sec:ApplicatonStochasticProcesses} that
there exists a centred $H^k(\Theta)$-valued Gaussian element $X$ having $T_h$ as its covariance operator. This immediately implies the assertion.\\
Alternatively, we can also use the $L^2$-eigensystem $\{ (\lambda_{j,0}, e_{j,0}) \mid j\in\N \}$. We then have $\lambda_{j,0}\geq 0$,
$e_{j,0} \in C^k(\ol\Theta)$ and, for $|\alpha|,|\beta|\leq k$, the expansion 
$\partial^{[\alpha,\beta]}h(x,y) = \sum_{j} \lambda_{j,0} e_{j,0}^{[\alpha]}(x)e_{j,0}^{[\beta]}(y)$, where the right hand side converges absolutely and uniformly
(e.g., \cite{Zhou2007DerivativeReproducingPropertiesForKernelMethods} or \cite{Takhanov2023SpeedOfConvergenceMercer}, Lemma 3). 
It follows that for $f\in H^k(\Theta)$,
\begin{align*}
	\< T_{h}f,f \>_{H^k}
	&= \sum_{|\alpha|,|\beta|\leq k} \int_{\Theta}\int_{\Theta} f^{(\alpha)}(x) \partial^{[\alpha,\beta]}h(x,y) \partial^{(\beta)}f(y) \,dx\,dy\\
	&= \sum_j \lambda_{j,0} \Big| \sum_{|\alpha|\leq k} \int_{\Theta} f^{(\alpha)}(x) e_{j,0}^{[\alpha]}(x) \,dx \Big|^2\\
	&= \sum_j \lambda_{j,0} |\< f,e_{j,0} \>_{H^k}|^2 \geq 0,
\end{align*} 
where interchanging summation and integration is allowed due to the uniform convergence of the series. 
Moreover, define the map $A\colon H^k(\Theta) \to \ell^2$, $f\mapsto (\sqrt{\lambda_{j,0}}\< f, e_{j,0} \>_{H^k})_{j\in\N}$ and observe that for any 
orthonormal basis $\{ u_n\mid n\in\N \}$, by Parseval's identity,
\begin{align}\label{Eq:HSNormOfA}
	\sum_n \| A u_n \|_{\ell^2}^2 
	&= \sum_n \sum_j \lambda_{j,0} |\< u_n, e_{j,0} \>_{H^k}|^2\nonumber\\
	&= \sum_j \lambda_{j,0} \| e_{j,0} \|_{H^k}^2\nonumber\\
	&= \sum_{|\alpha|\leq k} \int_{\Theta} \sum_j \lambda_{j,0} |e_{j,0}^{[\alpha]}(x)|^2 \,dx\nonumber\\
	&= \sum_{|\alpha|\leq k} \int_{\Theta} \partial^{[\alpha,\alpha]}h(x,x) \,dx < \infty.
\end{align}  
This shows $A \in HS(H^k,\ell^2)$ and it follows from the identity $\<Af,g\>_{\ell^2} = \<f,A^*g\>_{H^k}$ that the adjoint operator $A^* \in HS(\ell^2, H^k)$ 
is given by $A^*g = \sum_j \sqrt{\lambda_{j,0}}\, g_j e_{j,0}$ for $g=(g_j)_j \in \ell^2$. Specifically, by Cauchy-Schwarz inequality, we have for $x\in\ol\Theta$,
\begin{align*}
	\sum_{j\geq N} \sqrt{\lambda_{j,0}} \,|g_j\, e_{j,0}^{[\alpha]}(x)| 
	\leq \Big(\sum_{j \geq N} g_j^2\Big)^{1/2} \Big(\sum_{j \geq N} \lambda_{j,0}|e_{j,0}^{[\alpha]}(x)|^2 \Big)^{1/2}
	\leq \Big(\sum_{j \geq N} g_j^2\Big)^{1/2} \| \partial^{[\alpha,\alpha]}h \|_{\infty}^{1/2}.
\end{align*}
This implies $A^*g \in C^k(\ol\Theta) \subset H^k(\Theta)$ with $[A^*g]^{[\alpha]}(x) = \sum_j \sqrt{\lambda_{j,0}}\, g_j e_{j,0}^{[\alpha]}(x)$. 
It is then straight forward to verify $T_h = A^* A$, which yields $T_h$ is nuclear. 
\end{proof}

\begin{Thrm}[Refined Mercer Theorem]\label{Thrm:Mercer}
Let $k\in \N_0$ and suppose $h\in C^{k,k}(\ol\Theta \times \ol\Theta)$ is positive definite. Furthermore, let $\{ (\lambda_{j,k}, e_{j,k})\mid j\in\N \}$ 
denote the eigensystem of the associated kernel operator $T_{h,k} \in HS(H^k)$ defined via \eqref{Eq:AssociatedOperator}. 
Then, $T_{h,k}$ is also compact on $H_0^k(\Theta) := \{ f\in C^k(\Theta)\,|\, \|f\|_{H^k} < \infty \}$, $e_{j,k}\in C^k(\ol\Theta)$ for all 
$j\in \N$ and for $|\alpha|,|\beta| \leq k$,
\begin{align*}
	\lim_{m\to \infty} \sup_{x,y \in \ol\Theta} \Big| \partial^{[\alpha,\beta]}h(x,y)
	- \sum_{j\leq m} \lambda_{j,k} \,e_{j,k}^{[\alpha]}(x)\, e_{j,k}^{[\beta]}(y) \Big| 
	= 0.
\end{align*}
\end{Thrm}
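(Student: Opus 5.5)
We follow the structure of the classical Mercer proof, but carried out in $H^k(\Theta)$ instead of $L^2(\Theta)$. By Lemma \ref{Lem:NuclearityOfT_h}, $T_h=T_{h,k}$ is non-negative and nuclear, so $\lambda_{j}=\lambda_{j,k}\ge 0$ and $\sum_j\lambda_j<\infty$.

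\emph{Step 1: regularity of the eigenfunctions.} For $\lambda_j>0$ we write $e_j=\lambda_j^{-1}T_he_j$. Differentiating the integral \eqref{Eq:EigenEquation} under the integral sign is legitimate, because every $\partial^{[\alpha,\beta]}h$ with $|\alpha|,|\beta|\le k$ is uniformly continuous on $\ol\Theta\times\ol\Theta$. This gives
\[
e_j^{[\beta]}(y)=\lambda_j^{-1}\sum_{|\alpha|\le k}\int_\Theta \partial^{[\alpha,\beta]}h(x,y)\,e_j^{(\alpha)}(x)\,dx ,
\]
and the right-hand side is uniformly continuous in $y$. Hence $e_j\in C^k(\ol\Theta)$.

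The same estimate gives the compactness claim on $H_0^k$. By Cauchy--Schwarz,
\[
\|T_hf\|_{C^k(\ol\Theta)}\lesssim \|f\|_{H^k}.
\]
The family $\{T_hf : \|f\|_{H^k}\le 1\}$ has equicontinuous $\beta$-derivatives, since the moduli of continuity of $\partial^{[\alpha,\beta]}h$ in $y$ control them. Arzelà--Ascoli then shows that this family is relatively compact in $C^k(\ol\Theta)$, and therefore also in $H_0^k$, whose norm is dominated by the $C^k$ norm on a bounded domain.

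\emph{Step 2: pointwise control of the diagonal.} Set
\[
h_m(x,y)=h(x,y)-\sum_{j\le m}\lambda_j e_j(x)e_j(y)\in C^{k,k}(\ol\Theta\times\ol\Theta).
\]
The operator $T_{h_m,k}=T_h-\sum_{j\le m}\lambda_j e_j\otimes e_j$ is non-negative on $H^k$. We want to transfer this to a pointwise statement: for each $x$ and each fixed $\alpha$, the mixed derivative $\partial^{[\alpha,\alpha]}h_m(x,x)\ge 0$. More generally, for points $x_i$, multi-indices $\alpha_i$ and coefficients $c_i$,
\[
\sum_{i,l} c_ic_l\,\partial^{[\alpha_i,\alpha_l]}h_m(x_i,x_l)\ge 0 .
\]
To prove this, represent the functional $f\mapsto \sum_i c_i f^{[\alpha_i]}(x_i)$ approximately in the pairing $\<T_{h_m}f,f\>_{H^k}$. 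Concretely, choose $f$ such that the measure $\sum_{|\gamma|\le k}f^{(\gamma)}(x)\,dx$ tested against $\partial^{[\gamma,\cdot]}h_m$ approximates the distribution $\sum_i c_i\delta^{(\alpha_i)}_{x_i}$. The cleanest route is to use the Gaussian element of Example \ref{Ex:DPKernels}, or Riesz representers: $\<T_{h_m} g,g\>_{H^k}$ with $g=T_h\varphi$ reduces to mollified versions of the quadratic form, and uniform continuity lets us pass to the limit.

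This positivity step is the main obstacle. If it resists, we fall back on proving only the weaker diagonal inequality
\[
\sum_{j\le m}\lambda_j|e_j^{[\alpha]}(x)|^2\le \partial^{[\alpha,\alpha]}h(x,x),
\]
which is all the argument needs. That inequality should follow from the positive definiteness of the limit kernels $\partial^{[\alpha,\alpha]}h_m$: each is a limit of difference quotients of positive definite kernels, and positive definiteness is preserved under such limits.

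\emph{Step 3: uniform convergence (Dini plus Cauchy--Schwarz).} From Step 2, $\sum_j\lambda_j|e_j^{[\alpha]}(x)|^2$ converges for every $x$ and is bounded by $\|\partial^{[\alpha,\alpha]}h\|_\infty$. Cauchy--Schwarz then gives, uniformly in $y$,
\[
\sum_{j>m}\lambda_j|e_j^{[\alpha]}(x)e_j^{[\beta]}(y)|\le\Big(\sum_{j>m}\lambda_j|e_j^{[\alpha]}(x)|^2\Big)^{1/2}\|\partial^{[\beta,\beta]}h\|_\infty^{1/2}.
\]
So for each fixed $x$ the series converges uniformly in $y$, and the limit is continuous in $y$. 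By \eqref{Eq:PointwiseConvergence} the series equals $\partial^{[\alpha,\beta]}h$ almost everywhere, and continuity upgrades this to equality everywhere; in particular
\[
\partial^{[\alpha,\alpha]}h(x,x)=\sum_j\lambda_j|e_j^{[\alpha]}(x)|^2 .
\]
Dini's theorem on the compact set $\ol\Theta$ now applies: the partial sums are continuous and increase to a continuous limit, so the convergence is uniform. Inserting this uniformity into the Cauchy--Schwarz bound above yields the claimed uniform convergence of the series for $\partial^{[\alpha,\beta]}h$ on $\ol\Theta\times\ol\Theta$.
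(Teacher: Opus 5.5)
Your Steps 1 and 3 are sound. Step 1 is more direct than the paper's detour through Meyers--Serrin, and identifying the pointwise limit via \eqref{Eq:KernelDerivativeExpansionL2} plus separate continuity can replace the paper's $q_{\beta,y}$ argument. Step 2, however, is not proved. That step is the diagonal bound $\sum_{j\le m}\lambda_j|e_j^{[\alpha]}(x)|^2\le\partial^{[\alpha,\alpha]}h(x,x)$, and it is the only input Step 3 has. Non-negativity of $T_{h_m,k}$ is an integrated statement, and you never exhibit test functions that turn it into a pointwise one.
\begin{itemize}
\item Your concrete choice $g=T_h\varphi$ does not reduce $\<T_{h_m,k}g,g\>_{H^k}$ to a mollified quadratic form of $h_m$, since it composes $h_m$ with two further copies of $h$.
\item Taking $f$ itself to be a mollifier fails because the $H^k$-pairing sees every derivative $f^{(\gamma)}$, $|\gamma|\le k$. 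For $f\in C_c^\infty(\Theta)$ the form is $\iint (Lf)(x)\,h_m(x,y)\,(Lf)(y)\,dx\,dy$ with $L=\sum_{|\gamma|\le k}(-1)^{|\gamma|}\partial^{[2\gamma]}$, and the range of $L$ is not dense.
\item Genuine Riesz representers of $u\mapsto u^{[\alpha]}(x)$ need $k-|\alpha|>d/2$ and a regular domain, which the theorem does not assume.
\end{itemize}
The fallback is circular. Difference quotients of $h_m$ are positive definite only if $h_m$ itself is pointwise positive definite, and that is exactly the unproved $\alpha_i=0$ case of your claim. Difference quotients of $h$ and of $\sum_{j\le m}\lambda_j e_j\otimes e_j$ are each positive definite, but their difference need not be.

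The missing idea is to pass to the classical $L^2$-operator of the single continuous kernel $\partial^{[\alpha,\alpha]}h_m$. The paper does this with the $L^2(\Theta\times\Theta)$-expansion \eqref{Eq:KernelDerivativeExpansionL2} from Theorem~\ref{Thrm:KernelExpansion}, using $\sum_j\lambda_j<\infty$ to justify Fubini:
\[
\<T_{\partial^{[\alpha,\alpha]}h_m,0}f,f\>_{L^2}=\sum_{j>m}\lambda_j\,|\<f,e_j^{[\alpha]}\>_{L^2}|^2\ge 0 \qquad (f\in L^2(\Theta)).
\]
The classical Mercer--Young equivalence then makes $\partial^{[\alpha,\alpha]}h_m$ positive definite, so its diagonal is non-negative.

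Alternatively, your Riesz-representer idea works if you use the right representer. For $\rho\in L^2(\Theta)$, let $g_\rho=\partial^{(\alpha)*}\rho\in H^k(\Theta)$ represent $u\mapsto\int_\Theta\rho\,u^{(\alpha)}$. By \eqref{Eq:DerivativeT_hfAsInnerProduct} and symmetry of $h_m$,
\[
0\le\<T_{h_m,k}\,g_\rho,g_\rho\>_{H^k}=\int_\Theta\int_\Theta\rho(x)\,\partial^{[\alpha,\alpha]}h_m(x,y)\,\rho(y)\,dx\,dy .
\]
Letting $\rho$ run through a mollifier concentrating at $x$ gives $\partial^{[\alpha,\alpha]}h_m(x,x)\ge 0$ by uniform continuity. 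Using $\sum_i c_i g_{\rho_i}$ even gives your multi-point inequality. One of these arguments has to be supplied before Step 3 can start.
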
 

\begin{proof}
\emph{Step 1 (Compactness of $T_h$ on $H_0^k(\Theta)$)}\\
First note that for $f\in H_0^k(\Theta)$ by assumption 
\begin{align}\label{Eq:UniformBoundMercer}
	\| T_h f \|_{C^k(\ol\Theta)} 
	&= \sum_{|\beta|\leq k} \| (T_h f)^{[\beta]} \|_{\infty} \nonumber\\
	&\leq \sum_{|\alpha|,|\beta|\leq k} \sup_{y\in\Theta} \Big( \int_\Theta \big| \partial^{[\alpha,\beta]}h(x,y) \big|^2 \,dx \Big)^{1/2} 
	\Big( \int_\Theta \big|  f^{[\alpha]}(x)  \big|^2 \,dx \Big)^{1/2} 
	\nonumber\\
	&\leq \sum_{|\alpha|,|\beta|\leq k} \sqrt{\mathrm{vol}(\Theta)} \, \| \partial^{[\alpha, \beta]}h \|_{\infty} \, \| f^{[\alpha]} \|_{L^2} 
	\nonumber\\
	&= \sqrt{\kappa\,\mathrm{vol}(\Theta)} \, \Big( \sum_{|\alpha|,|\beta|\leq k} \| \partial^{[\alpha, \beta]}h \|_{\infty}^2 \Big)^{1/2}\,\|f\|_{H^k},
\end{align}
where $\kappa=\kappa(k,d)=\binom{k+d}{k}$ is the cardinality of the set $\{ \beta \in \N_0^d\mid |\beta| \leq k \}$. 
Moreover, since $\partial^{[\alpha,\beta]}h$ has a unique continuous extension 
on $\overline{\Theta} \times \overline{\Theta}$ for all $|\alpha|,|\beta| \leq k$, for any $\varepsilon > 0$ there exists a $\delta > 0$ such that
\begin{align*}
	\| (x,y) - (x',y') \|_2 \leq \delta \quad\Longrightarrow\quad 
	|\partial^{[\alpha,\beta]} h(x,y) - \partial^{[\alpha,\beta]} h(x',y')| \leq \varepsilon.
\end{align*}
Therefore, if $\|y-y'\|_2 = \| (x,y) - (x,y') \|_2 \leq \delta$, then for $f\in H_0^k(\Theta)$ and $|\beta|\leq k$,
\begin{align}\label{Eq:UniformContinuityT_hf}
	&\big| [T_hf]^{[\beta]}(y) - [T_hf]^{[\beta]}(y') \big|
	\nonumber\\
	&\leq \sum_{|\alpha|\leq k} \Big( \int_\Theta \big| \partial^{[\alpha,\beta]}h(x,y) - \partial^{[\alpha,\beta]}h(x,y')\big|^2 \,dx \Big)^{1/2}
	\, \Big( \int_\Theta \big| f^{[\alpha]}(x) \big|^2 \,dx \Big)^{1/2}
	\nonumber\\
	&\leq \Big( \sum_{|\alpha|\leq k} \int_\Theta \big| \partial^{[\alpha,\beta]}h(x,y) - \partial^{[\alpha,\beta]}h(x,y')\big|^2 \,dx \Big)^{1/2}
	\, \Big( \sum_{|\alpha|\leq k} \|f^{[\alpha]}\|_{L^2}^2 \Big)^{1/2}
	\nonumber\\
	&\leq \varepsilon\, \sqrt{\kappa\,\mathrm{vol}(\Theta)} \, \|f\|_{H^k}.
\end{align}
This shows $[T_hf]^{[\beta]}$ is also uniformly continuous on $\Theta$. Hence,   
the restriction $T_{h}\colon H_0^k(\Theta) \to C^k(\ol\Theta)$ is well-defined and \eqref{Eq:UniformBoundMercer}
implies $T_{h} \in \LL(H_0^k, C^k(\ol\Theta))$ with 
\begin{align}\label{Eq:OperatorNormT_{0,h}}
	\|T_{h}\|_{\LL(H_0^k,  C^k(\ol\Theta))} 
	\leq \sqrt{\kappa\,\mathrm{vol}(\Theta)} \, \Big( \sum_{|\alpha|, |\beta|\leq k} \| \partial^{[\alpha,\beta]}h \|_{\infty}^2 \Big)^{1/2}.
\end{align} 
Compactness of $T_h\colon H_0^k(\Theta) \to C^k(\ol\Theta)$ can be established as follows: Let $(g_n)_n$ be a sequence in 
$K:= \{ T_hf \mid f\in H_0^k(\Theta)~\text{with}~ \|f\|_{H^k} \leq 1 \}$, 
that is, $g_n = T_hf_n$ for some $f_n \in H_0^k(\Theta)$ with $\|f_n\|_{H^k} \leq 1$. Then, as a consequence of \eqref{Eq:OperatorNormT_{0,h}},
\begin{align*}
	\sum_{|\beta|\leq k} \| g_n^{[\beta]} \|_{\infty}
	=\|g_n\|_{C^k(\ol\Theta)}  
	\leq \sqrt{\kappa\,\mathrm{vol}(\Theta)} \, \Big( \sum_{|\alpha|, |\beta| \leq k} \| \partial^{[\alpha, \beta]}h \|_{\infty}^2 \Big)^{1/2},
\end{align*}
which implies $(g_n^{[\beta]})_n$ is uniformly bounded for all $|\beta| \leq k$. Moreover, due to \eqref{Eq:UniformContinuityT_hf},
\begin{align*}
	|g_n^{[\beta]}(y) - g_n^{[\beta]}(y')|
	= | (T_hf_n)^{[\beta]}(y) - (T_hf_n)^{[\beta]}(y') |
	\leq \varepsilon\, \sqrt{\kappa\,\mathrm{vol}(\Theta)}.
\end{align*}
Hence, $(g_n^{[\beta]})_n$ is also uniformly equicontinuous for all $|\beta| \leq k$ and, thus, by the Arzela-Ascoli theorem, 
there exists a subsequence $(g_{0,n})_n$ 
of $(g_n)_n$ that converges uniformly to some $g \in C(\ol\Theta)$. Since $(g_{0,n}^{[\beta]})_n$ is again uniformly bounded and 
equicontinuous for all $|\beta|=1$ another application of the Arzela-Ascoli theorem yields existence of a further subsequence $(g_{1,n})_n$ 
such that $(g_{1,n}^{[\beta]})_n$ converges 
uniformly and the limit agrees with $g^{[\beta]}$ for all $|\beta|=1$, that is $g\in C^1(\ol\Theta)$. 
Repeating this argument $k$-times then leads to a subsequence $(g_{k,n})_n$ and a $g\in C^k(\ol\Theta)$ such that 
$g_{k,n}^{[\beta]} \to g^{[\beta]}$ uniformly for 
all $|\beta|\leq k$ as $n \to \infty$. This in turn establishes relative compactness of $K$ and, therefore, 
$T_h\colon H_0^k(\Theta) \to C^k(\ol\Theta)$ is compact. 
Since the identity $\mathrm{Id}\colon C^k(\ol\Theta) \to H_0^k(\Theta)$ is continuous, the map $T_h\colon H_0^k(\Theta) \to H_0^k(\Theta)$ is 
compact as well. 
By the Meyers-Serrin theorem \cite{MeyerSerrin1964H=W} the completion of $H_0^k(\Theta)$ with respect to 
$\|\cdot\|_{H^k}$ is $H^k(\Theta)$.
Therefore, it follows from the spectral theorem for compact operators on inner product spaces (e.g., \cite{Werner2018Funktionalanalysis}, 
Theorem VI.4.1) that $e_j \in H_0^k(\Theta)$ in the representation \eqref{Eq:SpectralRepresentation}. Moreover, $e_j \in C^k(\ol\Theta)$ as
$e_j^{[\beta]}=[T_he_j]^{[\beta]}$ is uniformly continuous for $|\beta|\leq k$ by \eqref{Eq:UniformContinuityT_hf}.\\

\noindent\emph{Step 2 (Pointwise absolute convergence)}\\
By Step 1 the eigenfunctions $e_j$ are $k$-times continuously differentiable. Hence, the symmetric kernels 
$h_n\colon \ol\Theta\times \ol\Theta \to \R$, $n\in \N$,
defined by 
\begin{align*}
	h_n(x,y) := \sum_{j\leq n} \lambda_j\, e_j(x)\,e_j(y)
\end{align*}
are differentiable as well with 
\begin{align*}
	\partial^{[\alpha,\beta]}h_n(x,y)
	= \sum_{j\leq n} \lambda_j\, e_j^{[\alpha]}(x)\,e_j^{[\beta]}(y)
\end{align*}
for $|\alpha|,|\beta| \leq k$. Let $T_{\partial^{[\alpha,\alpha]}h,0}\colon L^2(\Theta) \to L^2(\Theta)$ denote the $L^2$-integral operator associated to the 
kernel $\partial^{[\alpha,\alpha]}h$ (cf. Remark \ref{Rem:HSNorm}), then by \eqref{Eq:KernelDerivativeExpansionL2}, for $f\in L^2(\Theta)$,
\begin{align}\label{Eq:Non-negativityThL2}
	\< T_{\partial^{[\alpha,\alpha]}h,0}f, f \>_{L^2}
	&= \int_{\Theta} \int_{\Theta} f(x) \,\partial^{[\alpha,\alpha]}h(x,y)\, f(y) \,dx \,dy\nonumber\\
	&= \int_{\Theta} \int_{\Theta} f(x) \, \sum_j \lambda_j \,e_j^{[\alpha]}(x)\,e_j^{[\alpha]}(y) \, f(y) \,dx \,dy\nonumber\\
	&= \sum_j \lambda_j |\< f, e_j^{[\alpha]} \>_{L^2}|^2.
\end{align}
The use of Fubini's theorem for the last equality is a crucial step in the proof and can be justified via Lemma \ref{Lem:NuclearityOfT_h}: 
because $h\in C^{k,k}(\ol\Theta\times\ol\Theta)$, the associated operator $T_h\colon H^k(\Theta) \to H^k(\Theta)$ is non-negative and nuclear. 
Accordingly, the eigenvalues $\lambda_j$ are non-negative and summable, which yields
\begin{align*}
	\sum_j \int_{\Theta} \int_{\Theta} | \lambda_j \,e_j^{[\alpha]}(x)\,e_j^{[\alpha]}(y) \, f(x)f(y)| \,dx\,dy
	&\leq \sum_j \lambda_j\, \|e_j^{[\alpha]}\|_{L^2}^2 \,\|f\|_{L^2}^2 \\ 
	&\leq \|f\|_{L^2} \sum_j \lambda_j < \infty.
\end{align*} 
Similarly, let $T_{\partial^{[\alpha,\alpha]}h_n,0}\colon L^2(\Theta) \to L^2(\Theta)$ denote the $L^2$-integral operator associated to the kernel 
$\partial^{[\alpha,\alpha]}h_n$. Then,
\begin{align}\label{Eq:Non-negativityTh_nL2}
	\< T_{\partial^{[\alpha,\alpha]}h_n,0} f,f \>_{L^2}
	= \sum_{j\leq n} \lambda_j |\< f, e_j^{[\alpha]} \>_{L^2}|^2.
\end{align}
It follows from \eqref{Eq:Non-negativityThL2} and \eqref{Eq:Non-negativityTh_nL2} that the $L^2$-integral operator 
$T_{\partial^{[\alpha,\alpha]}(h-h_n),0}\colon$ $L^2(\Theta) \to L^2(\Theta)$ associated to the kernel 
$\partial^{[\alpha,\alpha]}(h-h_n) \colon \ol\Theta \times \ol\Theta \to \R$ satisfies 
\begin{align*}
	\< T_{\partial^{[\alpha,\alpha]}(h-h_n),0} f,f \>_{L^2}
	&= \< T_{\partial^{[\alpha,\alpha]}h,0}f, f \>_{L^2} - \< T_{\partial^{[\alpha,\alpha]}h_n,0} f,f \>_{L^2}\\
	&= \sum_{j>n} \lambda_j |\< f, e_j^{[\alpha]} \>_{L^2}|^2 \geq 0,
\end{align*}
since all $\lambda_j \geq 0$. 
This shows $\partial^{[\alpha,\alpha]}(h-h_n)\colon \ol\Theta\times\ol\Theta \to \R$ is again a positive-definite kernel for $|\alpha|\leq k$
(e.g., \cite{Hsing2015FoundationsOfFDA}, Theorem 4.6.4), which implies for each $x\in \ol\Theta$,
\begin{align*}
	\partial^{[\alpha,\alpha]} h(x,x) - \partial^{[\alpha,\alpha]} h_n(x,x) 
	= \partial^{[\alpha,\alpha]} (h-h_n)(x,x)
	\geq 0.
\end{align*}
Now, this in turn yields
\begin{align}\label{Eq:MercerProof_PointwiseConvergence1}
	\sum_{j\leq n} \lambda_j |e_j^{[\alpha]}(x)|^2 
	= \partial^{[\alpha, \alpha]} h_n(x,x)
	\leq \partial^{[\alpha,\alpha]} h(x,x)
	\leq \| \partial^{[\alpha,\alpha]} h \|_{\infty}
\end{align}
and, hence, $\sum_{j} \lambda_j |e_j^{[\alpha]}(x)|^2$ converges for every $x\in\ol\Theta$ and $|\alpha|\leq k$. Furthermore, by
\eqref{Eq:MercerProof_PointwiseConvergence1}, for $|\alpha|,|\beta|\leq k$ and $x,y\in\ol\Theta$,
\begin{align*}
	\sum_{j\leq n}  \lambda_j |e_j^{[\alpha]}(x) e_j^{[\beta]}(y)|
	\leq \Big( \sum_{j\leq n} \lambda_j |e_j^{[\alpha]}(x)|^2 \Big)^{1/2} \Big( \sum_{j\leq n} \lambda_j |e_j^{[\beta]}(y)|^2 \Big)^{1/2}
	\leq \| \partial^{[\alpha,\alpha]} h \|_{\infty}^{1/2}\, \| \partial^{[\beta,\beta]} h \|_{\infty}^{1/2},
\end{align*}
which shows the series $\sum_j \lambda_j e_j^{[\alpha]}(x) e_j^{[\beta]}(y)$ converges absolutely.\\

\noindent\emph{Step 3 (Uniform convergence)}\\
Note that due to \eqref{Eq:HilbertSchmidtNorm}, for $n> m$,
\begin{align*}
	\| T_{h_n - h_m} \|_{HS(H^k)}^2
	&= \sum_{|\alpha|,|\beta|\leq k} \big\|  \partial^{[\alpha,\beta]} h_n - \partial^{[\alpha,\beta]} h_m  \big\|_{L^2}^2\\
	&= \sum_{|\alpha|,|\beta|\leq k} \int_\Theta \int_\Theta \Big|\sum_{m<j\leq n} \lambda_j \, e_j^{[\alpha]}(x)\, e_j^{[\beta]}(y) \Big|^2 \,dx\,dy\\
	&= \sum_{m <i,j\leq n} \lambda_i \lambda_j \, \< e_i, e_j \>_{H^k}^2
	= \sum_{m<j\leq n} \lambda_j^2.
\end{align*}
This shows $(T_{h_n})_n$ is a Cauchy sequence in $HS(H^k)$. Since this space is isometrically isomorphic to $H^{k,k}(\Theta \times \Theta)$ by Lemma 
\ref{Lem:CharacterizationHS-Opertors}, the limit operator $\widehat T_h := \lim_{n\to \infty} T_{h_n} \in HS(H^k)$ is associated to the kernel 
\begin{align*}
	\widehat{h}(x,y) := \sum_{j} \lambda_j e_j(x) \, e_j(y),
\end{align*}
where the right hand side converges in $H^{k,k}(\Theta\times\Theta)$. It remains to establish that the convergence is also uniform and
$\partial^{[\alpha, \beta]} \widehat{h}(x,y) = \partial^{[\alpha, \beta]} h(x,y)$. To that end, recall that 
$\sum_{j} \lambda_j |e_j^{[\beta]}(y)|^2$ converges for each 
fixed $y\in\ol\Theta$ and $|\beta|\leq k$ by \eqref{Eq:MercerProof_PointwiseConvergence1}. That is, for each $\epsilon>0$, there exists a 
$N=N(\epsilon,y)\in \N$, such that 
\begin{align}\label{Eq:MercerProof1}
	\sum_{j\geq N} \lambda_j |e_j^{[\beta]}(y)|^2 \leq \epsilon^2.
\end{align} 
Therefore, using the Cauchy-Schwarz inequality and \eqref{Eq:MercerProof_PointwiseConvergence1}, for all $x\in\ol\Theta$,
\begin{align}\label{Eq:MercerProof2}
	\sum_{j\geq N} |\lambda_j e_j^{[\alpha]}(x) e_j^{[\beta]}(y)|
	\leq \Big( \sum_{j\geq N} \lambda_j |e_j^{[\alpha]}(x)|^2 \Big)^{1/2} \Big( \sum_{j \geq N} \lambda_j |e_j^{[\beta]}(y)|^2 \Big)^{1/2}
	\leq \| \partial^{[\alpha,\alpha]}h \|_{\infty}^{1/2} \, \epsilon.
\end{align}
This shows $\sum_{j}|\lambda_j e_j^{[\alpha]}(x) e_j^{[\beta]}(y)|$ is uniformly convergent in $x$ for all $|\alpha|,|\beta| \leq k$ if $y$ is fixed. 
It follows that for any fixed $y\in \ol\Theta$ and $|\beta| \leq k$, the function 
$q_{\beta,y}(x) := \partial^{[0,\beta]}h(x,y) - \partial^{[0,\beta]}\wh{h}(x,y)$, $x\in\ol\Theta$, is $k$-times continuously differentiable and by Lemma 
\ref{Lem:UniformConvergence}, for any $f\in C^k(\ol\Theta)$,
\begin{align*}
	\< q_{\beta,y}, f \>_{H^k}
	&= \int_\Theta \sum_{|\alpha|\leq k} \partial^{[\alpha,\beta]}h(x,y) f^{[\alpha]}(x) \,dx
	- \sum_{j=1}^\infty \lambda_j e_j^{[\beta]}(y) \int_\Theta \sum_{|\alpha|\leq k} e_j^{[\alpha]}(x) f^{[\alpha]}(x)\, dx\\
	&= [T_hf]^{[\beta]}(y) - \sum_{j=1}^\infty \lambda_j \< f,e_j \>_{H^k}\, e_j^{[\beta]}(y) = 0.
\end{align*}
Taking $f = q_{\beta,y}$ then yields  
\begin{align*}
	\| q_{\beta,y} \|_{H^k}^2 
	= \sum_{|\alpha|\leq k} \int_\Theta | \partial^{[\alpha,\beta]} h(x,y) - \partial^{[\alpha,\beta]}\wh{h}(x,y) |^2 \,dx
	= 0,
\end{align*}
which implies $\partial^{[\alpha,\beta]} \widehat{h}(x,y) = \partial^{[\alpha,\beta]} h(x,y)$ for all $x,y \in \ol\Theta$ and 
$|\alpha|,|\beta|\leq k$ by continuity. In particular,
\begin{align*}
	\partial^{[\alpha,\alpha]} h(x,x)
	= \partial^{[\alpha,\alpha]} \wh{h}(x,x)
	= \sum_j \lambda_j |e_j^{[\alpha]}(x)|^2,
\end{align*}
where the right hand side converges for each  $x\in\ol\Theta$. Now, according to Dini's theorem (e.g., \cite{Werner2018Funktionalanalysis}), the convergence 
is also uniform, wherefore $N$ in \eqref{Eq:MercerProof1} can be chosen independently of $y$. By \eqref{Eq:MercerProof2}, this finally implies 
$\sum_j \lambda_j e_j^{[\alpha]}(\cdot)e_j^{[\beta]}(\cdot)$ converges absolutely and uniformly to $\partial^{[\alpha,\beta]}h$.
\end{proof}

The following corollaries are immediate:

\begin{Cor}\label{Cor:NuclearNorm}
Granted the conditions of Theorem \ref{Thrm:Mercer} are satisfied, then $T_{h,k} \in N(H^k)$ is non-negative with
\begin{align}
	\|T_{h,k}\|_{N(H^k)} 
	= \sum_j \lambda_{j,k}
	= \sum_{|\alpha|\leq k} \int_{\Theta} \partial^{[\alpha,\alpha]}h(x,x) \,dx. \label{Eq:NuclearNorm}
\end{align}
Therefore, $\lambda_{j,k} \leq j^{-1}\, \mathrm{vol}(\Theta) \sum_{|\alpha|\leq k} \sup_{x\in \ol\Theta} |\partial^{[\alpha,\alpha]}h(x,x)|$\, for $j\geq 1$.
\end{Cor}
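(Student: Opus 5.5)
Non-negativity and nuclearity of $T_{h,k}$ are already supplied by Lemma \ref{Lem:NuclearityOfT_h}. For a non-negative nuclear operator the nuclear norm equals the trace, which equals the sum of the eigenvalues (all non-negative, counted with multiplicity). So $\|T_{h,k}\|_{N(H^k)}=\tr(T_{h,k})=\sum_j\lambda_{j,k}$ follows from the spectral representation \eqref{Eq:SpectralRepresentation}, since we may compute the trace in an orthonormal basis made of the $e_{j,k}$ together with an orthonormal basis of $\ker(T_{h,k})$. The substance is therefore the second equality, the trace formula.

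For this I would use the diagonal case $\alpha=\beta$ of Theorem \ref{Thrm:Mercer}. It gives, uniformly in $x\in\ol\Theta$,
\begin{align*}
	\partial^{[\alpha,\alpha]}h(x,x)=\sum_j \lambda_{j,k}\,|e_{j,k}^{[\alpha]}(x)|^2 ,
\end{align*}
and all terms are non-negative. Integrate over the bounded domain $\Theta$; uniform convergence, or monotone convergence since the terms are non-negative, justifies exchanging sum and integral. This gives $\int_\Theta\partial^{[\alpha,\alpha]}h(x,x)\,dx=\sum_j\lambda_{j,k}\|e_{j,k}^{[\alpha]}\|_{L^2}^2$. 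Now sum over $|\alpha|\le k$ and exchange the finite sum with the series. Using $\sum_{|\alpha|\le k}\|e_{j,k}^{[\alpha]}\|_{L^2}^2=\|e_{j,k}\|_{H^k}^2=1$, we obtain $\sum_{|\alpha|\le k}\int_\Theta\partial^{[\alpha,\alpha]}h(x,x)\,dx=\sum_j\lambda_{j,k}$. The strong derivatives $e_{j,k}^{[\alpha]}$ agree with the weak ones because $e_{j,k}\in C^k(\ol\Theta)$ by Theorem \ref{Thrm:Mercer}, so the $H^k$-norm identity is legitimate. This identification, together with the interchange of limits, is the only delicate point, and it is handled by the regularity and uniform convergence statements of the theorem. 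The same computation already appears in \eqref{Eq:HSNormOfA}, with the $L^2$-eigensystem in place of the $H^k$-eigensystem.

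For the eigenvalue bound, the eigenvalues are non-negative and arranged in decreasing order. Hence for each $j\ge1$,
\begin{align*}
	j\,\lambda_{j,k}\le\sum_{i\le j}\lambda_{i,k}\le\sum_i\lambda_{i,k}=\sum_{|\alpha|\le k}\int_\Theta\partial^{[\alpha,\alpha]}h(x,x)\,dx .
\end{align*}
Each integral is bounded by $\mathrm{vol}(\Theta)\sup_{x\in\ol\Theta}|\partial^{[\alpha,\alpha]}h(x,x)|$, and dividing by $j$ yields the claimed estimate $\lambda_{j,k}\le j^{-1}\,\mathrm{vol}(\Theta)\sum_{|\alpha|\le k}\sup_{x\in\ol\Theta}|\partial^{[\alpha,\alpha]}h(x,x)|$.
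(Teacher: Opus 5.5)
Your proposal is correct and follows the paper's proof essentially step for step. Non-negativity and nuclearity come from Lemma \ref{Lem:NuclearityOfT_h}; the trace formula comes from integrating the uniformly convergent diagonal expansion of Theorem \ref{Thrm:Mercer} term by term and using $\sum_{|\alpha|\le k}\|e_{j,k}^{[\alpha]}\|_{L^2}^2=\|e_{j,k}\|_{H^k}^2=1$; and the eigenvalue bound comes from $j\lambda_{j,k}\le\sum_i\lambda_{i,k}$. As you hint, the paper also notes that the trace formula follows alternatively from the factorization $T_{h,k}=A^*A$ together with \eqref{Eq:HSNormOfA}.
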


\begin{proof}
It has already been established in Lemma \ref{Lem:NuclearityOfT_h} that $T_h$ is non-negative and nuclear. 
Since the expansion \eqref{Eq:KernelDerivativeExpansionL2} converges uniformly by Theorem \ref{Thrm:Mercer}, it follows that 
\begin{align*}
	\|T_h\|_{N(H^k)}
	&= \sum_{j} \lambda_j \sum_{|\alpha|\leq k} \| e_j^{[\alpha]} \|_{L^2}^2\\
	&= \sum_{|\alpha|\leq k} \int_{\Theta}  \sum_{j} \lambda_j \,e_j^{[\alpha]}(x)\,e_j^{[\alpha]}(x) \,dx\\
	&= \sum_{|\alpha|\leq k} \int_{\Theta} \partial^{[\alpha,\alpha]}h(x,x) \,dx.
\end{align*}
Alternatively, the trace formula also already follows from Lemma \ref{Lem:NuclearityOfT_h} due to the factorization $T=A^*A$ and \eqref{Eq:HSNormOfA}.
The amendment is a consequence of the descending order of eigenvalues, which implies $j\lambda_{j,k} \leq \sum_i \lambda_{i,k}$.  
\end{proof}

\begin{Rem}[Relation to $L^2$-integral operators]\label{Rem:NuclearNorm}
For a positive definite kernel $h\in C^{k,k}(\ol\Theta\times \ol\Theta)$ let $T_{\partial^{[\alpha,\alpha]}h,0}\colon L^2(\Theta) \to L^2(\Theta)$ denote 
the $L^2$-integral operator associated to $\partial^{[\alpha,\alpha]}h \in C(\ol{\Theta}\times \ol{\Theta})$ for $|\alpha|\leq k$, 
cf. Remark \ref{Rem:HSNorm}. 
Note that \eqref{Eq:Non-negativityThL2} implies that $\partial^{[\alpha,\alpha]}h$ is itself positive definite (cf. Theorem 3.6 in
\cite{Buescu2004PositiveDefinitenessMaricesAndDifferentiableKernelInEq}). Consequently, $T_{\partial^{[\alpha,\alpha]}h,0}$ belongs to the trace-class $N(L^2)$ 
due to continuity of $\partial^{[\alpha,\alpha]}h$, and equation \eqref{Eq:NuclearNorm} can be restated as: 
\begin{align}\label{Eq:NuclearNormsRemark}
	\|T_{h,k}\|_{N(H^k)}
	= \sum_{|\alpha|\leq k} \|T_{\partial^{[\alpha,\alpha]}h,0}\|_{N(L^2)}.
\end{align}
Specifically, if $\{\lambda_{j,0}^{(\alpha)}\mid j\in\N \}$ denote the eigenvalues of $T_{\partial^{[\alpha,\alpha]}h,0} \in N(L^2)$, we have:
\begin{align*}
	\sum_{j} \lambda_{j,0}^{(\alpha)}
	=\|T_{\partial^{[\alpha,\alpha]}h,0}\|_{N(L^2)}
	= \int_\Theta \partial^{[\alpha,\alpha]}h(x,x) \,dx
	= \sum_j \lambda_{j,k} \|e_{j,k}^{[\alpha]}\|_{L^2}^2. 
\end{align*} 
This identity allows for the interpretation of $\| e_{j,k}^{[\alpha]} \|_{L^2}^2 \in (0,1)$ as a weight on $\lambda_{j,k}$ such that 
$\|T_{\partial^{[\alpha,\alpha]}h,0}\|_{N(L^2)} = \sum_j \lambda_{j,k} \|e_{j,k}^{[\alpha]}\|_{L^2}^2$ for $|\alpha| \leq k$.
However, simulations (see Example \ref{Ex:EigenExpansionIntBM}) suggest that the term wise relation 
$\lambda_{j,0}^{(\alpha)} = \lambda_{j,k} \|e_{j,k}^{[\alpha]}\|_{L^2}^2$ does not hold in general. Establishing a more precise relationship 
between these distinct eigenvalues - if one exists - will eventually require a more profound spectral analysis of $T_{h,k}$ and 
$T_{\partial^{[\alpha,\alpha]}h,0}$ and is thus left for future research. 
\end{Rem}

\begin{Cor}\label{Cor:MercerRateOfConvergence}
Granted the conditions of Theorem \ref{Thrm:Mercer} are satisfied. Suppose $\Theta$ has the cone property and $k-d/2 > s$ for some $s\in\N_0$. 
Then, 
\begin{align*}
	\max_{|\alpha|,|\beta|\leq s}\, \sup_{x,y \in \ol\Theta} \Big| \partial^{[\alpha,\beta]}h(x,y)
	- \sum_{j\leq m} \lambda_{j,k} \,e_{j,k}^{[\alpha]}(x)\, e_{j,k}^{[\beta]}(y) \Big|
	\leq C^2\, \sum_{j>m} \lambda_{j,k},
\end{align*}
where $C=C(\Theta,k,d)>0$ is the constant in \eqref{Eq:UniformBoundEigenfunctions}.
\end{Cor}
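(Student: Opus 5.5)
By Theorem \ref{Thrm:Mercer}, the expansion $\partial^{[\alpha,\beta]}h(x,y)=\sum_j \lambda_{j,k}\, e_{j,k}^{[\alpha]}(x)\,e_{j,k}^{[\beta]}(y)$ converges (absolutely and uniformly) for every $x,y\in\ol\Theta$ and all $|\alpha|,|\beta|\leq k$, in particular for $|\alpha|,|\beta|\leq s\leq k$. I will therefore write the remainder pointwise as the tail series. For fixed $x,y\in\ol\Theta$ this gives
\begin{align*}
	\Big|\partial^{[\alpha,\beta]}h(x,y) - \sum_{j\leq m}\lambda_{j,k}\, e_{j,k}^{[\alpha]}(x)\,e_{j,k}^{[\beta]}(y)\Big|
	= \Big|\sum_{j>m}\lambda_{j,k}\, e_{j,k}^{[\alpha]}(x)\,e_{j,k}^{[\beta]}(y)\Big|
	\leq \sum_{j>m}\lambda_{j,k}\,|e_{j,k}^{[\alpha]}(x)|\,|e_{j,k}^{[\beta]}(y)|.
\end{align*}
The bound on the right uses two facts. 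First, $\lambda_{j,k}\geq 0$, which holds by Lemma \ref{Lem:NuclearityOfT_h} because $T_{h,k}$ is non-negative. Second, the series $\sum_j\lambda_{j,k}$ converges, by Corollary \ref{Cor:NuclearNorm}, so the right-hand side is finite.

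Next I invoke the universal bound \eqref{Eq:UniformBoundEigenfunctions} from Theorem \ref{Thrm:KernelExpansion}. Since $\Theta$ has the cone property and $k-d/2>s$, we have $\sup_{x\in\Theta}|e_{j,k}^{[\alpha]}(x)|\leq C$ for all $j$ and all $|\alpha|\leq s$. The bound passes to $x\in\ol\Theta$ by continuity, because Theorem \ref{Thrm:Mercer} gives $e_{j,k}\in C^k(\ol\Theta)$. Substituting this into the tail estimate gives the bound $C^2\sum_{j>m}\lambda_{j,k}$ uniformly in $x,y$ and in $|\alpha|,|\beta|\leq s$. Taking the supremum and then the maximum over multi-indices yields the claim.

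One identification has to be checked, and it is the only place I expect any friction. The bound in Theorem \ref{Thrm:KernelExpansion} concerns the continuous representative of $e_{j,k}$ in $C_B^s(\Theta)$ and its classical derivatives. The corollary concerns the $C^k(\ol\Theta)$ eigenfunctions of Theorem \ref{Thrm:Mercer}. These are the same functions: a $C^k$ function is its own continuous representative, and its weak derivatives coincide with its classical ones. The embedding constant $C=C(\Theta,k,d)$ therefore applies directly to $e_{j,k}^{[\alpha]}$. Once this is settled, the argument is just the two estimates above.
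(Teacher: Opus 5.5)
Your proof is correct and follows the paper's argument: you write the remainder as the tail $\sum_{j>m}\lambda_{j,k}e_{j,k}^{[\alpha]}(x)e_{j,k}^{[\beta]}(y)$ using the uniform convergence from Theorem \ref{Thrm:Mercer}, then bound it with $\lambda_{j,k}\geq 0$ and the universal eigenfunction bound \eqref{Eq:UniformBoundEigenfunctions}. The only cosmetic difference is that the paper applies Cauchy--Schwarz to the tail before inserting $|e_{j,k}^{[\alpha]}|\leq C$, whereas you use the triangle inequality termwise; both give the same constant $C^2$.
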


\begin{proof}
The uniform convergence in Theorem \ref{Thrm:Mercer} allows to write  
\begin{align*}
	\partial^{[\alpha,\beta]}h(x,y)
	- \sum_{j\leq m} \lambda_{j,k} \,e_{j,k}^{[\alpha]}(x)\, e_{j,k}^{[\beta]}(y)
	= \sum_{j>m} \lambda_{j,k} \,e_{j,k}^{[\alpha]}(x)\, e_{j,k}^{[\beta]}(y)
	= \Delta_{m,k}^{[\alpha,\beta]}(x,y).
\end{align*}
Moreover, due to the assumptions, the universal uniform boundedness \eqref{Eq:UniformBoundEigenfunctions} of the eigenfunctions $e_{j,k}$ applies for some 
constant $C>0$. Thus, for $|\alpha|,|\beta| \leq s$,  
\begin{align*}
	\big| \Delta_{m,k}^{[\alpha,\beta]}(x,y) \big|^2
	\leq  \sum_{j>m} \lambda_{j,k} |e_{j,k}^{[\alpha]}(x)|^2 
	 \sum_{j>m} \lambda_{j,k} |e_{j,k}^{[\beta]}(y)|^2 
	&\leq \big(C^2 \sum_{j>m} \lambda_{j,k}\big)^2. \qedhere
\end{align*}
\end{proof}

\begin{Rem}[Rate of convergence]
Similar to \eqref{Eq:ApproxErrorL2} the error of approximation can be estimated only knowing the first $m$ eigenvalues since 
\eqref{Eq:NuclearNorm} yields 
\begin{align}\label{Eq:ApproxErrorMercer}
	\sum_{j>m} \lambda_{j,k} 
	= \sum_{|\alpha|\leq k} \int_\Theta \partial^{[\alpha,\alpha]}h(x,x) \,dx - \sum_{j\leq m} \lambda_{j,k}.
\end{align}
\end{Rem}

\subsection{Reproducing Kernel Hilbert Spaces}
A primary consequence of Theorem \ref{Thrm:Mercer} is that it yields novel spectral representations of the
Reproducing kernel Hilbert space (RKHS) associated with a positive definite kernel $h\colon \Theta\times \Theta \to \R$. 
Recall that the RKHS, denoted by $H_h$, is the unique Hilbert space of functions $f\colon \Theta\to\R$ in which point evaluations are continuous 
linear functionals. Equivalently, $H_h$ is characterized by the reproducing property 
\begin{align}\label{Eq:RKHS_RepProperty}
	\< f, h(x,\cdot) \>_{H_h} = f(x)
\end{align}
for all $f\in H_h$ and $x\in\Theta$. Formally, this space can be defined as the closure of the span of kernel functions $\{ h(x,\cdot)\mid x\in \Theta\}$
with respect to the inner product $\< f,g \>_{H_h} = \sum_{i\leq n}\sum_{j\leq m} a_ih(x_i,x_j)b_j$, where $f = \sum_{i\leq n} a_i h(x_i,\cdot)$ and 
$g=\sum_{j\leq m} b_j h(x_j,\cdot)$ (e.g., \cite{Aronszajn1950TheoryOfReproducingKernels}). 
For a continuous kernel $h$, Mercer's theorem provides a well-known 
spectral representation of $H_h$ in terms of the eigensystem $\{ (\lambda_{j,0}, e_{j,0})\mid j\in \N \}$ of the associated $L^2$-integral
operator $T_{h,0}$ (e.g., \cite{Cucker2001OnTheMathematicalFoundationsOfLearning}). 
Accordingly, additional representations for a differentiable kernel can now be derived from Theorem \ref{Thrm:Mercer}:

To this end, suppose $h\in C^{k,k}(\ol\Theta \times \ol\Theta)$ for $k\in\N_0$ is positive definite, and let $\{ (\lambda_{j,k}, e_{j,k})\mid j\in \N \}$ 
denote the eigensystem of the associated kernel operator $T_{h,k} \in HS(H^k)$. Furthermore, define the space
\begin{align}\label{Eq:RKHS_SpectralRep}
	\HH_{h,k} = \big\{ f\in H^k(\Theta) \mid f = \sum_j a_j\, e_{j,k} ~\text{s.t.}~(\lambda_{j,k}^{-1/2}\, a_j)_j \in \ell^2 \big\},
\end{align} 
where $\ell^2 = \{ (a_j)_j\mid \sum_j |a_j|^2 <\infty\}$ is the space of square summable sequences and $a_j = \< f,e_{j,k} \>_{H^k}$.
We equip $\HH_{h,k}$ with the inner product 
\begin{align}\label{Eq:RKHS_SpectralInnerProduct}
	\< f,g \>_{\HH_{h,k}} = \sum_j \lambda_{j,k}^{-1}\, a_j\, b_j
\end{align}
for $f = \sum_j a_j\,e_{j,k}$ and $g=\sum_j b_j\, e_{j,k}$. To prove that $H_h = \HH_{h,k}$ as function spaces under these 
conditions, we first establish a series of auxiliary results that are also of independent interest.  

\begin{Lem}\label{Lem:RKHS_FeatureMap} 
For $|\alpha| \leq k$ the feature map, 
\begin{align*}
	E_k^\alpha\colon \ol\Theta \to \ell^2, \quad x\mapsto \big(\sqrt{\lambda_{j,k}}\,e_{j,k}^{[\alpha]}(x)\big)_j,
\end{align*}
is well-defined, continuous, and satisfies 
\begin{align*}
	\< E_k^\alpha(x), E_k^\beta(y) \>_{\ell^2} = \partial^{[\alpha,\beta]}h(x,y).
\end{align*}
\end{Lem}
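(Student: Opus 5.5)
The plan is to read everything off the pointwise absolute convergence and the identification of the limit established in the proof of Theorem \ref{Thrm:Mercer}. By Step 1 of that proof, each $e_{j,k}\in C^k(\ol\Theta)$, so $e_{j,k}^{[\alpha]}(x)$ is defined for every $x\in\ol\Theta$ and $|\alpha|\leq k$. By Steps 2 and 3, for every $x,y\in\ol\Theta$ we have
\begin{align*}
	\partial^{[\alpha,\beta]}h(x,y) = \sum_j \lambda_{j,k}\, e_{j,k}^{[\alpha]}(x)\, e_{j,k}^{[\beta]}(y),
\end{align*}
and the series converges absolutely. Recall also that $\lambda_{j,k}\geq 0$ by Lemma \ref{Lem:NuclearityOfT_h}, so the square roots make sense.

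\emph{Well-definedness.} Taking $\beta=\alpha$ and $y=x$ gives
\begin{align*}
	\|E_k^\alpha(x)\|_{\ell^2}^2 = \sum_j \lambda_{j,k}\,|e_{j,k}^{[\alpha]}(x)|^2 = \partial^{[\alpha,\alpha]}h(x,x) \leq \|\partial^{[\alpha,\alpha]}h\|_\infty < \infty .
\end{align*}
This is exactly the bound \eqref{Eq:MercerProof_PointwiseConvergence1}, now with equality. Hence $E_k^\alpha(x)\in\ell^2$.

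\emph{Inner product identity.} Since both sequences lie in $\ell^2$, the inner product $\<E_k^\alpha(x),E_k^\beta(y)\>_{\ell^2}$ is the absolutely convergent series $\sum_j \lambda_{j,k} e_{j,k}^{[\alpha]}(x)e_{j,k}^{[\beta]}(y)$. By the pointwise identity above, this equals $\partial^{[\alpha,\beta]}h(x,y)$.

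\emph{Continuity.} This is the only step needing a small computation. Expanding the square and applying the identity just proved three times gives
\begin{align*}
	\|E_k^\alpha(x)-E_k^\alpha(y)\|_{\ell^2}^2 = \partial^{[\alpha,\alpha]}h(x,x) - 2\,\partial^{[\alpha,\alpha]}h(x,y) + \partial^{[\alpha,\alpha]}h(y,y).
\end{align*}
Because $\partial^{[\alpha,\alpha]}h$ is (uniformly) continuous on $\ol\Theta\times\ol\Theta$, the right-hand side tends to $0$ as $y\to x$. Indeed, uniform continuity makes $E_k^\alpha$ uniformly continuous.

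I expect no real obstacle here. The only care needed is to make sure the pointwise identity is available for \emph{every} $(x,y)\in\ol\Theta\times\ol\Theta$, not merely almost everywhere. This is provided by Step 3 of the proof of Theorem \ref{Thrm:Mercer}, where continuity upgrades the identity $\partial^{[\alpha,\beta]}\wh h=\partial^{[\alpha,\beta]}h$ to hold everywhere. Alternatively, one may simply invoke the uniform convergence stated in Theorem \ref{Thrm:Mercer}.
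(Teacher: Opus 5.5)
Your proposal is correct and follows essentially the same route as the paper: it reads off well-definedness, the inner-product identity and continuity directly from the everywhere-valid, absolutely and uniformly convergent expansion of Theorem~\ref{Thrm:Mercer}, and obtains continuity by expanding $\|E_k^\alpha(x)-E_k^\alpha(y)\|_{\ell^2}^2$. Your version is slightly more careful, since you note that $\lambda_{j,k}\geq 0$ (needed for the square roots) and you correctly write the squared norm, which the paper's display omits.
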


\begin{proof}
The series $\sum_j \lambda_{j,k}\, |e_{j,k}^{[\alpha]}(x)|^2$ converges uniformly to $\partial^{[\alpha,\alpha]}h(x,x)$ by Theorem \ref{Thrm:Mercer}, thus 
$E_k^{\alpha}(x) \in \ell^2$. Moreover, by the same theorem, 
\begin{align*}
	\< E_k^\alpha(x), E_k^\beta(y) \>_{\ell^2} 
	= \sum_j \lambda_{j,k}\, e_{j,k}^{[\alpha]}(x)\,e_{j,k}^{[\beta]}(y) = \partial^{[\alpha,\beta]}h(x,y)
\end{align*}
and 
\begin{align*}
	\| E_k^\alpha(x) - E_k^\alpha(y) \|_{\ell^2}
	= \partial^{[\alpha,\alpha]}h(x,x) + \partial^{[\alpha,\alpha]}h(y,y) - 2\partial^{[\alpha,\alpha]}h(x,y),
\end{align*}
which implies continuity of $E_k^\alpha$ since $\partial^{[\alpha,\alpha]}h \in C(\ol\Theta \times\ol\Theta)$ by assumption. 
\end{proof}

Regarding the regularity of functions in $\HH_{h,k}$ the following applies: 

\begin{Prop}\label{Prop:RKHSisSubsetOfC^k}
The functions in $\HH_{h,k}$ are $k$-times continuously differentiable, i.e. $\HH_{h,k} \subset C^k(\ol\Theta)$. 
Moreover, if $f=\sum_j a_j \, e_{j,k} \in \HH_{h,k}$, then for $|\alpha| \leq k$ and $x\in\ol\Theta$,
\begin{align}\label{Eq:DerivativeInRKHS}
	f^{[\alpha]}(x) = \sum_j a_j \, e_{j,k}^{[\alpha]}(x)
\end{align} 
and the series converges uniformly and absolutely. 
\end{Prop}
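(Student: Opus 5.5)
The plan is to show that the formally differentiated series $\sum_j a_j\, e_{j,k}^{[\alpha]}(x)$ converges absolutely and uniformly on $\ol\Theta$. Term-wise differentiation is then legitimate, its limit equals $f^{[\alpha]}$, and it is continuous. Everything rests on the feature-map bound of Lemma \ref{Lem:RKHS_FeatureMap} together with the uniform convergence in Theorem \ref{Thrm:Mercer}.

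Fix $f=\sum_j a_j e_{j,k}\in\HH_{h,k}$ and $|\alpha|\le k$. Only indices with $\lambda_{j,k}>0$ appear, since the condition $(\lambda_{j,k}^{-1/2}a_j)_j\in\ell^2$ forces $a_j=0$ otherwise. Write $a_j = \lambda_{j,k}^{-1/2}a_j\cdot \lambda_{j,k}^{1/2}$ and apply Cauchy--Schwarz to the tail:
\begin{align*}
\sum_{j\ge N} |a_j\, e_{j,k}^{[\alpha]}(x)|
\le \Big(\sum_{j\ge N}\lambda_{j,k}^{-1}a_j^2\Big)^{1/2}\Big(\sum_{j\ge N}\lambda_{j,k}|e_{j,k}^{[\alpha]}(x)|^2\Big)^{1/2}.
\end{align*}
The first factor tends to $0$ as $N\to\infty$ and does not depend on $x$. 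The second factor is bounded by $\|\partial^{[\alpha,\alpha]}h\|_\infty^{1/2}$ through \eqref{Eq:MercerProof_PointwiseConvergence1}. By Theorem \ref{Thrm:Mercer}, $\sum_j\lambda_{j,k}|e_{j,k}^{[\alpha]}(x)|^2$ converges uniformly to $\partial^{[\alpha,\alpha]}h(x,x)$, so its tail also tends to $0$ uniformly in $x$. Either estimate yields a uniform Cauchy criterion. Hence, for every $|\alpha|\le k$, the series $\sum_j a_j e_{j,k}^{[\alpha]}$ converges absolutely and uniformly on $\ol\Theta$ to a function $g_\alpha\in C(\ol\Theta)$, because each $e_{j,k}\in C^k(\ol\Theta)$ by Theorem \ref{Thrm:Mercer}.

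Next comes the identification of the limits, which I expect to be the main technical point. The partial sums $S_n=\sum_{j\le n}a_je_{j,k}$ lie in $C^k(\ol\Theta)$, and all their derivatives up to order $k$ converge uniformly. By the standard theorem on uniform convergence of derivatives (the paper's Lemma \ref{Lem:UniformConvergence}, applied inductively in each coordinate direction along segments inside $\Theta$), the limit $g_0$ is $k$-times continuously differentiable in $\Theta$ with $g_0^{[\alpha]}=g_\alpha$. Each $g_\alpha$ is a uniform limit of uniformly continuous functions, so it is uniformly continuous, and therefore $g_0\in C^k(\ol\Theta)$. If one prefers to avoid local path arguments, the same conclusion follows weakly: for $\varphi\in C_c^\infty(\Theta)$, uniform convergence lets us pass to the limit in $\int S_n\varphi^{[\alpha]}=(-1)^{|\alpha|}\int S_n^{[\alpha]}\varphi$. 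So $g_\alpha$ is the weak derivative of $g_0$, and since it is continuous it is the classical derivative.

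Finally, $g_0=f$. Uniform convergence on the bounded domain implies $L^2$ convergence of every $S_n^{[\alpha]}$, hence $S_n\to g_0$ in $H^k$. On the other hand $S_n\to f$ in $H^k$ by orthonormal expansion, because $a_j=\<f,e_{j,k}\>_{H^k}$ and $f$ lies in the closed span of the $e_{j,k}$. Thus $f=g_0$ almost everywhere. Taking the continuous representative gives $f\in C^k(\ol\Theta)$ together with \eqref{Eq:DerivativeInRKHS}, with absolute and uniform convergence.
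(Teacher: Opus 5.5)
Your proposal is correct and follows essentially the paper's route: the same Cauchy--Schwarz splitting $a_j=\lambda_{j,k}^{-1/2}a_j\cdot\lambda_{j,k}^{1/2}$, combined with the diagonal bound \eqref{Eq:MercerProof_PointwiseConvergence1}, gives absolute and uniform convergence of $\sum_j a_j e_{j,k}^{[\alpha]}$, and $e_{j,k}\in C^k(\ol\Theta)$ then yields $f\in C^k(\ol\Theta)$; your identification of the uniform limit with $f$ via $H^k$-convergence of the partial sums is more explicit than the paper, whose estimate $|f(x)|^2\le\|f\|_{\HH_{h,k}}^2h(x,x)$ leaves that identification implicit. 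One minor correction: Lemma \ref{Lem:UniformConvergence} is not the classical theorem on term-wise differentiation (it concerns the spectral expansion of $[T_hf]^{[\alpha]}$ for $f\in C^k(\ol\Theta)$), so rely on your weak-derivative argument or the standard calculus result instead.
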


\begin{proof}
Let $f=\sum_j a_j \, e_{j,k} \in \HH_{h,k}$ and $x\in \ol\Theta$. Then, according to Theorem \ref{Thrm:Mercer} and \eqref{Eq:RKHS_SpectralInnerProduct},
\begin{align*}
	|f(x)|^2 
	= \big| \sum_j \frac{a_j}{\sqrt{\lambda_{j,k}}} \sqrt{\lambda_{j,k}}\, e_{j,k}(x) \big|^2
	\leq \sum_j \frac{a_j^2}{\lambda_{j,k}} \, \sum_j \lambda_{j,k} |e_{j,k}(x)|^2
	= \| f \|_{\HH_{h,k}}^2 \, h(x,x) \leq \| f \|_{\HH_{h,k}}^2 \|h\|_\infty.
\end{align*}
Therefore, convergence in $\HH_{h,k}$ implies uniform convergence and applied to 
$f_n = f - \sum_{j\leq n} a_j e_{j,k}$ this yields $f(x) = \sum_j a_j \, e_{j,k}(x)$ uniformly in $x\in \ol\Theta$. Similarly, for $|\alpha| \leq k$, 
\begin{align*}
	\sum_{j\geq n} | a_j \, e_{j,k}^{[\alpha]}(x) |
	\leq \Big( \sum_{j\geq n} \frac{a_j^2}{\lambda_{j,k}} \Big)^{1/2} \| \partial^{[\alpha,\alpha]}h \|_{\infty}^{1/2},
\end{align*} 
which shows absolute and uniform convergence of $\sum_{j} a_j \, e_{j,k}^{[\alpha]}$. Since $e_{j,k} \in C^k(\ol\Theta)$, this in turn establishes 
$f\in C^k(\ol\Theta)$ and \eqref{Eq:DerivativeInRKHS}.
\end{proof}

The reproducing property \eqref{Eq:RKHS_RepProperty} is verified in the next lemma. 

\begin{Lem}\label{Lem:RKHS_RepProperty}
For all  $f\in \HH_{h,k}$, $|\alpha| \leq k$ and $x\in\ol\Theta$,
\begin{enumerate}
	\item[$\mathrm{(a)}$] $\displaystyle \partial^{[\alpha,0]}h(x,\cdot) \in \HH_{h,k}$
	\item[$\mathrm{(b)}$] $\displaystyle \< f, \partial^{[\alpha,0]}h(x,\cdot) \>_{\HH_{h,k}} = f^{[\alpha]}(x)$
\end{enumerate}
\end{Lem}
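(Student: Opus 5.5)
The plan is to compute the $H^k$-coefficients of $g_x := \partial^{[\alpha,0]}h(x,\cdot)$ explicitly via the eigen equation, and then read off both (a) and (b) from the spectral inner product \eqref{Eq:RKHS_SpectralInnerProduct}. Fix $x\in\ol\Theta$ and $|\alpha|\leq k$. Since $h\in C^{k,k}(\ol\Theta\times\ol\Theta)$, we have $g_x\in C^k(\ol\Theta)\subset H^k(\Theta)$.

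For the coefficients, by symmetry of $h$, \eqref{Eq:DerivativeThf} and \eqref{Eq:DerivativeT_hfEigenfunction}, now holding pointwise because $e_{j,k}\in C^k(\ol\Theta)$ and $T_he_{j,k}$ has continuous derivatives (Step 1 of Theorem \ref{Thrm:Mercer}), we get
\begin{align*}
	a_j(x) := \< g_x, e_{j,k}\>_{H^k} = [T_{h,k}e_{j,k}]^{[\alpha]}(x) = \lambda_{j,k}\, e_{j,k}^{[\alpha]}(x).
\end{align*}
To justify the middle equality pointwise, I would note that both sides are continuous in $x$: the left side because $\partial^{[\alpha,\beta]}h$ is uniformly continuous, the right side because $e_{j,k}\in C^k(\ol\Theta)$. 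The a.e.\ identity therefore upgrades to every $x\in\ol\Theta$.

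Next I check that $g_x$ is the series built from these coefficients. The sum $\sum_j a_j(x)\, e_{j,k}$ has
\begin{align*}
	\sum_j \lambda_{j,k}^{-1} a_j(x)^2 = \sum_j \lambda_{j,k}\, |e_{j,k}^{[\alpha]}(x)|^2 = \partial^{[\alpha,\alpha]}h(x,x) < \infty
\end{align*}
by Theorem \ref{Thrm:Mercer} (equivalently Lemma \ref{Lem:RKHS_FeatureMap}). I also need $g_x$ to have no component in $\ker(T_{h,k})$. For $e\in\ker(T_{h,k})$ one gets $\<g_x,e\>_{H^k}=[T_he]^{[\alpha]}(x)=0$, again upgraded from a.e.\ to everywhere by continuity of $y\mapsto \int_\Theta\sum_\beta \partial^{[\beta,\alpha]}h(z,y)e^{(\beta)}(z)\,dz$, exactly as in \eqref{Eq:UniformContinuityT_hf}. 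Alternatively, and more directly, Theorem \ref{Thrm:Mercer} gives $g_x(y)=\sum_j\lambda_{j,k}e_{j,k}^{[\alpha]}(x)e_{j,k}(y)$ uniformly in $y$, together with all $y$-derivatives up to order $k$. The series therefore converges in $H^k$ to $g_x$, so $g_x=\sum_j a_j(x)\,e_{j,k}$ with $(\lambda_{j,k}^{-1/2}a_j(x))_j\in\ell^2$, which proves (a). I would use this second route, since it avoids the kernel issue entirely.

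For (b), take $f=\sum_j b_j e_{j,k}\in\HH_{h,k}$. By definition,
\begin{align*}
	\<f,g_x\>_{\HH_{h,k}}=\sum_j \lambda_{j,k}^{-1} b_j\,\lambda_{j,k} e_{j,k}^{[\alpha]}(x)=\sum_j b_j e_{j,k}^{[\alpha]}(x),
\end{align*}
and by Proposition \ref{Prop:RKHSisSubsetOfC^k}, \eqref{Eq:DerivativeInRKHS}, this equals $f^{[\alpha]}(x)$. Absolute convergence of this series is guaranteed by Cauchy--Schwarz, as in that proposition.

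The main obstacle is the pointwise (not merely a.e.) identification in (a), namely that $g_x$ for each fixed $x$, including boundary points, is exactly the series. I would handle it through the uniform convergence of all mixed derivatives in Theorem \ref{Thrm:Mercer}, which gives $H^k$-convergence in $y$ for every fixed $x$.
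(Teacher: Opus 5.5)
Your proposal is correct and follows essentially the same route as the paper. The coefficients of $\partial^{[\alpha,0]}h(x,\cdot)$ are $\lambda_{j,k}e_{j,k}^{[\alpha]}(x)$; Theorem \ref{Thrm:Mercer} supplies both the expansion and $\sum_j \lambda_{j,k}^{-1}(\lambda_{j,k}e_{j,k}^{[\alpha]}(x))^2=\partial^{[\alpha,\alpha]}h(x,x)$; and (b) then follows from the spectral inner product together with Proposition \ref{Prop:RKHSisSubsetOfC^k}. Your argument that uniform convergence of all $y$-derivatives gives $H^k$-convergence for each fixed $x\in\ol\Theta$ makes explicit a step the paper leaves implicit.
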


\begin{proof}
(a) Let $b_j = \lambda_{j,k}\,e_{j,k}^{[\alpha]}(x)$, then $\sum_j b_j\, e_{j,k} = \partial^{[\alpha,0]}h(x,\cdot)$ 
and $\sum_j b_j^2 \,\lambda_{j,k}^{-1} = \partial^{[\alpha,\alpha]}h(x,x)$ according to Theorem \ref{Thrm:Mercer}.\\
(b) Let $f=\sum_j a_j e_{j,k}$, then by (a) and Proposition \ref{Prop:RKHSisSubsetOfC^k}, 
\begin{align*}
	\< f, \partial^{[\alpha,0]}h(x,\cdot) \>_{\HH_{h,k}}
	= \sum_j \lambda_{j,k}^{-1}\, a_j\, b_j
	= \sum_j a_j \,e_{j,k}^{[\alpha]}(x)
	&= f^{[\alpha]}(x).\qedhere
\end{align*}
\end{proof}

\begin{Thrm}\label{Thrm:RKHS_SpectralRep}
As Hilbert spaces of functions, $(\HH_{h,k}, \<\cdot,\cdot\>_{\HH_{h,k}}) = (H_h, \<\cdot,\cdot\>_{H_h})$.
\end{Thrm}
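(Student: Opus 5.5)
The plan is to invoke the Moore--Aronszajn uniqueness of the RKHS: a Hilbert space of functions on $\Theta$ has reproducing kernel $h$ for at most one such space. It therefore suffices to show that $(\HH_{h,k},\<\cdot,\cdot\>_{\HH_{h,k}})$ is a Hilbert space of genuine functions on $\Theta$, that it contains every $h(x,\cdot)$, and that it satisfies $\<f,h(x,\cdot)\>_{\HH_{h,k}}=f(x)$ for all $f\in\HH_{h,k}$ and $x\in\Theta$.

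Containment and reproduction come directly from Lemma \ref{Lem:RKHS_RepProperty}, (a) and (b), with $\alpha=0$. To get a space of functions rather than equivalence classes, we use Proposition \ref{Prop:RKHSisSubsetOfC^k}. Each $f\in\HH_{h,k}$ has a canonical continuous representative $\sum_j a_j e_{j,k}$, and the series converges uniformly. Two elements that agree pointwise have equal $H^k$-coefficients $a_j=\<f,e_{j,k}\>_{H^k}$. So the identification with functions is injective and the inner product is well defined on functions.

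For completeness, the map $f\mapsto(\lambda_{j,k}^{-1/2}a_j)_j$ sends $\HH_{h,k}$ isometrically into $\ell^2$. The index set is the set of $j$ with $\lambda_{j,k}>0$; eigenvalues are nonnegative by Lemma \ref{Lem:NuclearityOfT_h}, and zero eigenvalues must be excluded from the definition. The key step is surjectivity: given $(c_j)\in\ell^2$, set $a_j=\sqrt{\lambda_{j,k}}c_j$. Because the $\lambda_{j,k}$ are bounded, $(a_j)\in\ell^2$, so $f=\sum_j a_je_{j,k}$ converges in $H^k$ and lies in $\HH_{h,k}$. Hence $\HH_{h,k}\cong\ell^2$ is complete.

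Uniqueness then follows. Let $H_0=\Span\{h(x,\cdot)\}$. On $H_0$ the two inner products agree, since by the reproducing property both give $\<h(x,\cdot),h(y,\cdot)\>=h(x,y)$. Hence $\ol{H_0}^{\HH_{h,k}}$ is isometric to $H_h$ as a space of functions. Here we use that convergence in either norm implies pointwise convergence, because $|f(x)|\le\|f\|\,h(x,x)^{1/2}$ in both spaces. It remains to show $H_0$ is dense in $\HH_{h,k}$. If $g\in\HH_{h,k}$ is orthogonal to every $h(x,\cdot)$, then $g(x)=\<g,h(x,\cdot)\>_{\HH_{h,k}}=0$ for all $x$, so $g=0$ as a function. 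By the injectivity above, $g=0$ in $\HH_{h,k}$. Thus the closure is all of $\HH_{h,k}$, and $\HH_{h,k}=H_h$ with equal inner products.

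The step I expect to be most delicate is the bookkeeping that turns $\HH_{h,k}$ into a space of genuine functions with a well-defined norm. This needs the uniform convergence of Proposition \ref{Prop:RKHSisSubsetOfC^k} together with careful treatment of the indices with $\lambda_{j,k}=0$. Once that is settled, the rest is the standard Moore--Aronszajn argument.
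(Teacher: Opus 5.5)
Your proposal is correct and follows the paper's argument: containment and the reproducing property come from Lemma \ref{Lem:RKHS_RepProperty}, the kernel sections $\{h(x,\cdot)\}$ are dense because anything orthogonal to all of them vanishes pointwise, and the identification then follows from uniqueness of the RKHS. You also make explicit two points the paper's short proof takes for granted when it invokes the uniqueness theorem: completeness of $\HH_{h,k}$ via the isometry onto $\ell^2$, and the passage from $H^k$-equivalence classes to genuine functions.
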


\begin{proof}	
Let $f\in \HH_{h,k}$ and suppose $\< f, h(x,\cdot) \>_{\HH_{h,k}}= 0$ for all $x\in\ol\Theta$. Then $f=0$ by Lemma \ref{Lem:RKHS_RepProperty}, which implies 
$\{ h(x,\cdot)\mid x\in\ol\Theta \}$ is dense in $\HH_{h,k}$. Combined with property (a) in Lemma \ref{Lem:RKHS_RepProperty} this already proves 
the assertion due to the unique characterization of the RKHS $H_h$ (e.g., \cite{Cucker2001OnTheMathematicalFoundationsOfLearning}, Theorem III.2).
\end{proof}

Specifically, Theorem \ref{Thrm:RKHS_SpectralRep} provides a link between the eigensystems corresponding to kernel operators of varying orders, all associated 
with a fixed positive definite kernel $h \in C^{k,k}(\ol\Theta\times\ol\Theta)$ .

\begin{Cor}
For $\ell\leq k$, let $\{ (\lambda_{j,\ell}, e_{j,\ell})\mid j\in\N \}$ denote the eigensystem of the associated operator $T_{h,\ell}\in HS(H^\ell)$. 
Then, for $n\in\N$,
\begin{align*}
	\frac{1}{\lambda_{n,k}}
	= \sum_j \frac{|\< e_{n,k}, e_{j,\ell} \>_{H^\ell}|^2}{\lambda_{j,\ell}}
	\quad\text{and}\quad
	\frac{1}{\lambda_{n,\ell}}
	= \sum_j \frac{|\< e_{n,\ell}, e_{j,k} \>_{H^k}|^2}{\lambda_{j,k}}.
\end{align*}
\end{Cor}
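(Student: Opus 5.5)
The plan is to exploit that, by Theorem \ref{Thrm:RKHS_SpectralRep} applied once with order $k$ and once with order $\ell$ (legitimate since $h\in C^{k,k}(\ol\Theta\times\ol\Theta)\subset C^{\ell,\ell}(\ol\Theta\times\ol\Theta)$), the two spectral spaces coincide isometrically:
\begin{align*}
	\HH_{h,k} = H_h = \HH_{h,\ell}.
\end{align*}
Hence, for every $f\in H_h$, the two expressions \eqref{Eq:RKHS_SpectralInnerProduct} for $\|f\|_{H_h}^2$ agree. I will evaluate this identity at an eigenfunction. By symmetry of the two claims it suffices to prove the first; the second follows by interchanging the roles of $k$ and $\ell$ in the computation.

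First I note that $e_{n,k}\in \HH_{h,k}$ whenever $\lambda_{n,k}>0$. Only strictly positive eigenvalues are indexed, since $T_{h,k}$ is non-negative by Corollary \ref{Cor:NuclearNorm} and $\lambda^{-1}$ must make sense. Its $H^k$-coefficients are $a_j=\delta_{jn}$, so
\begin{align*}
	\|e_{n,k}\|_{\HH_{h,k}}^2 = \lambda_{n,k}^{-1}.
\end{align*}
Consequently $e_{n,k}\in H_h=\HH_{h,\ell}$. By the definition \eqref{Eq:RKHS_SpectralRep} at level $\ell$, this means
\begin{align*}
	e_{n,k}=\sum_j b_j\, e_{j,\ell}, \qquad b_j=\< e_{n,k},e_{j,\ell}\>_{H^\ell},
\end{align*}
with norm $\sum_j b_j^2/\lambda_{j,\ell}$. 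Equating the two norms gives the first formula.

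The main point to check is that the element of $\HH_{h,\ell}$ identified with $e_{n,k}$ really has coefficients $\< e_{n,k},e_{j,\ell}\>_{H^\ell}$, i.e. that the identification $\HH_{h,k}=\HH_{h,\ell}$ is an equality of functions and not merely an abstract isometry. This holds because both spaces consist of genuine $C^k$, resp.\ $C^\ell$, functions on $\ol\Theta$ (Proposition \ref{Prop:RKHSisSubsetOfC^k}), and both equal $H_h$ as function spaces. So $e_{n,k}$, viewed as a function, lies in $\HH_{h,\ell}\subset H^\ell(\Theta)$. Its expansion in the $H^\ell$-orthonormal system $\{e_{j,\ell}\}$ therefore has these coefficients, and it has no component in $\ker T_{h,\ell}$, since membership in $\HH_{h,\ell}$ requires $f=\sum_j a_j e_{j,\ell}$.

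A second possible subtlety is the ordering and multiplicity of eigenvalues, but nothing in the argument depends on it. The derivation is a single norm identity for each fixed $n$.
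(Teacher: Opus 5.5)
Your proposal is correct and is essentially the paper's proof: Theorem \ref{Thrm:RKHS_SpectralRep} at orders $k$ and $\ell$ equates the two spectral norms on $H_h$, and you then substitute $e_{n,k}$ (resp.\ $e_{n,\ell}$), whose norm on its own side is $1/\lambda_n$ by orthonormality. Your extra remarks — that only positive eigenvalues are indexed, and that the identification is one of genuine continuous functions via Proposition \ref{Prop:RKHSisSubsetOfC^k} — make explicit what the paper leaves implicit.
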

\begin{proof}
According to Theorem \ref{Thrm:RKHS_SpectralRep}, we have $\|f\|_{\HH_{h,k}} = \| f \|_{H_h} = \|f\|_{\HH_{h,\ell}}$ for any $f\in H_h$. 
By \eqref{Eq:RKHS_SpectralInnerProduct}, this implies 
\begin{align}\label{Eq:CorRKHSNorm}
	\sum_j \frac{|\<f,e_{j,k}\>_{H^k}|^2}{\lambda_{j,k}}
	= \sum_j \frac{|\<f,e_{j,\ell}\>_{H^\ell}|^2}{\lambda_{j,\ell}}.
\end{align}
Moreover, $e_{n,k}, e_{n,\ell}\in H_h \subset C^k(\ol\Theta)$ by Proposition \ref{Prop:RKHSisSubsetOfC^k}. Substituting $e_{n,k}$, respectively, $e_{n,\ell}$
into \eqref{Eq:CorRKHSNorm} yields the assertion.
\end{proof}

\section{Applications to Stochastic Processes}\label{Sec:ApplicatonStochasticProcesses}
Before applying the results of the previous section to stochastic processes with paths in a Sobolev space, a brief review of some fundamentals 
of probability theory on Banach spaces is in order. For a comprehensive exposition of this topic, the reader may wish to consult 
\cite{Bogachev1998GaussianMeasures}, 
\cite{Hsing2015FoundationsOfFDA},
\cite{Hytonen2016AnalysisInBanachSpaces},
\cite{LedouxTalagrand1991ProbabilityBanachSpaces},  
and \cite{Vakhania1987ProbabilityDistributionsOnBanachSpaces}.

\subsection{Random Elements in Sobolev Spaces}\label{SubSec:Random Elements in Sobolev Spaces}
Throughout this section, $(\Omega,\A,\P)$ denotes a complete probability space, meaning that all $\P$-negligible sets are contained in $\A$. 
Given a real Banach space $E$, a map $X\colon \Omega \to E$ is called an $E$-valued random element, if it is $\A$-$\B(E)$-measurable, where $\B(E)$ 
denotes the Borel-$\sigma$-algebra of $E$. If $E$ is separable, this is equivalent to the $\A$-measurability of $\varphi(X)\colon \Omega \to \R$ 
for all $\varphi \in E_0'$, where $E_0'$ is a separating subspace of $E'$.  
Moreover, in this case, the distribution or law of an $E$-valued random element $X$ - defined as the push-forward measure $\P_X := \P \circ X^{-1}$ 
on $(E, \B(E)$ - is uniquely determined by the joint distribution of the marginals $\varphi(X)$ for $\varphi \in E_0'$.
A measure $\gamma$ on $(E, \B(E))$ is called Gaussian, if $\gamma\circ \varphi^{-1}$ is a Gaussian measure on $(\R,\B(\R))$ for all $\varphi \in E'$.
Similarly, an $E$-valued random element $X$ is called Gaussian, if its law $\P_X$ is a Gaussian measure; that is, if $\varphi(X)$ is a normally distributed 
for all $\varphi \in E'$.

\begin{Lem}\label{Lem:SobolevRandomElement}
Let $X\colon \Omega \to H^k(\Theta)$ be a map. The following statements are equivalent:
\begin{enumerate}
	\item[$\mathrm{(i)}$] $X$ is a $H^k(\Theta)$-valued random element; that is $\A$-$\B(H^k)$-measurable.
	\item[$\mathrm{(ii)}$] $X^{(\alpha)}$ is a $L^2(\Theta)$-valued random element for $|\alpha| \leq k$; that is $\A$-$\B(L^2)$-measurable.
\end{enumerate}
\end{Lem}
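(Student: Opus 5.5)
The equivalence will follow from the isometric embedding $\J\colon H^k(\Theta)\to L_\kappa^2(\Theta)$, $f\mapsto (f^{(\alpha)})_{|\alpha|\leq k}$, of Remark \ref{Rem:H_Kappa}, combined with the separability of all spaces involved. Throughout, I use the characterization recalled above: for a separable Banach space $E$, a map into $E$ is $\B(E)$-measurable if and only if $\varphi\circ X$ is $\A$-measurable for all $\varphi$ in a separating subspace $E_0'\subseteq E'$. Both $H^k(\Theta)$ and $L^2(\Theta)$ are separable, so the criterion applies to each.

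For (i) $\Rightarrow$ (ii), note that for each $|\alpha|\leq k$ the map $D^\alpha\colon H^k(\Theta)\to L^2(\Theta)$, $f\mapsto f^{(\alpha)}$, is linear and bounded, since $\|f^{(\alpha)}\|_{L^2}\leq\|f\|_{H^k}$. It is therefore continuous and hence Borel measurable. The composition $X^{(\alpha)}=D^\alpha\circ X$ is consequently $\A$-$\B(L^2)$-measurable.

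For (ii) $\Rightarrow$ (i), I identify $H^k(\Theta)'$ with $H^k(\Theta)$ via the Riesz representation theorem. Then every $\varphi\in H^k(\Theta)'$ has the form
\begin{align*}
	\varphi(f)=\<f,g\>_{H^k}=\sum_{|\alpha|\leq k}\<f^{(\alpha)},g^{(\alpha)}\>_{L^2}
\end{align*}
for some $g\in H^k(\Theta)$. Hence
\begin{align*}
	\varphi(X)=\sum_{|\alpha|\leq k}\<X^{(\alpha)},g^{(\alpha)}\>_{L^2}
\end{align*}
is a finite sum of functions of the form $\psi_\alpha(X^{(\alpha)})$, where $\psi_\alpha=\<\cdot,g^{(\alpha)}\>_{L^2}\in L^2(\Theta)'$. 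Each such term is $\A$-measurable by (ii), because continuous linear functionals are Borel. So $\varphi(X)$ is $\A$-measurable for every $\varphi$ in the full dual $H^k(\Theta)'$, which is certainly separating by Hahn--Banach. Separability of $H^k(\Theta)$ then yields (i).

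The only step needing care is this last appeal to separability, namely the passage from measurability of all functionals $\varphi(X)$ to Borel measurability of $X$. In a separable metric space the Borel $\sigma$-algebra is generated by the open balls, and each ball can be expressed via countably many functionals: $\|f-f_0\|=\sup_n \varphi_n(f-f_0)$ for a norming sequence $(\varphi_n)$. This is exactly the Pettis-type fact recalled in the paragraph preceding the lemma, so I will simply cite it.

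As an alternative route, one could argue via the product map $\omega\mapsto(X^{(\alpha)}(\omega))_{|\alpha|\leq k}$. By (ii) and separability, this map is measurable into $L_\kappa^2(\Theta)$, since the product Borel $\sigma$-algebra equals the Borel $\sigma$-algebra of the product. It takes values in the closed subspace $H_\kappa(\Theta)$. Composing with the continuous inverse $\J^{-1}$ then recovers $X$ as a measurable map.
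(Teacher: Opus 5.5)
Your proposal is correct and is essentially the paper's proof. The direction (ii)$\Rightarrow$(i) is exactly the paper's argument: write $\langle X, f\rangle_{H^k}=\sum_{|\alpha|\leq k}\langle X^{(\alpha)},f^{(\alpha)}\rangle_{L^2}$ and invoke the functional criterion for measurability in the separable space $H^k(\Theta)$. For (i)$\Rightarrow$(ii) you compose $X$ directly with the bounded operator $\partial^{(\alpha)}\colon H^k(\Theta)\to L^2(\Theta)$, whereas the paper tests $X^{(\alpha)}$ against the separating family $\langle\cdot,\varphi\rangle_{L^2}$, $\varphi\in C_c^{\infty}(\Theta)$, via integration by parts---a minor and slightly more direct variation.
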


\begin{proof}
Since $X(\omega) \in H^k(\Theta)$ for all $\omega \in \Omega$ by assumption, the map $X^{(\alpha)}\colon \Omega \to L^2(\Theta)$ is well-defined 
for $|\alpha| \leq k$.\\
(i) $\Longrightarrow$ (ii):
It suffices to verify that for $\varphi \in C_c^{\infty}(\Theta)$ (which is separating for $L^2(\Theta)$) the map 
\begin{align*}
	\omega \mapsto \< X^{(\alpha)}(\omega), \varphi \>_{L^2} = (-1)^{|\alpha|} \< X(\omega), \varphi^{[\alpha]} \>_{L^2}
\end{align*}
is measurable. But this follows immediately from the $\A$-$\B(H^k)$-measurability of $X$ and the continuity of the functional 
$\<\,\cdot\,, \varphi^{[\alpha]}\>_{L^2}\colon H^k(\Theta) \to \R$.\\
(ii) $\Longrightarrow$ (i): Since each $X^{(\alpha)}$ is $\A$-$\B(L^2)$-measurable, it follows that
\begin{align*}
	\omega \mapsto \< X(\omega), f \>_{H^k} = \sum_{|\alpha| \leq k} \< X^{(\alpha)}(\omega), f^{(\alpha)} \>_{L^2}
\end{align*}
is measurable for any $f\in H^k(\Theta)$. Hence, $X$ is $\A$-$\B(H^k)$-measurable.
\end{proof}

\paragraph{Expectation}
The separability of $H^k(\Theta)$ implies that $\A$-$\B(H^k)$-measurability of a map $X:\Omega \to H^k(\Theta)$ is equivalent to the notion of strong 
measurability (or Bochner measurability). Consequently, for any $H^k(\Theta)$-valued random element $X$, the Bochner-integral 
$\E[X]:= \int_{\Omega} X \,d\P \in H^k(\Theta)$ exists if and only if 
$\E\|X\|_{H^k}<\infty$, for which $\E\|X^{(\alpha)}\|_{L^2} < \infty$ for all $|\alpha| \leq k$ is sufficient. In this case, 
the expectation $\E[X] \in H^k(\Theta)$ is uniquely determined by the relation $\E \< X, f \>_{H^k} = \< \E[X], f \>_{H^k}$ for all $f\in H^k(\Theta)$, 
that is
\begin{align}\label{Eq:ExpectationRelation}
	\sum_{|\alpha| \leq k} \E \big[\int_\Theta X^{(\alpha)}(t) f^{(\alpha)}(t) \,dt \big]
	= \sum_{|\alpha| \leq k} \int_\Theta \E[X]^{(\alpha)}(t) f^{(\alpha)}(t) \,dt.
\end{align}
Moreover, $\partial^{(\alpha)}\in \LL(H^k, L^2)$, so we also have $\E[X]^{(\alpha)} = \E[X^{(\alpha)}]$ as elements in $L^2(\Theta)$, that is 
\begin{align*}
	\int_{\Theta} ( \E[X]^{(\alpha)}(t) - \E[X^{(\alpha)}](t) )^2 \,dt = 0. 
\end{align*}
A $H^k(\Theta)$-valued random element $X$ with $\E[X] \equiv 0$ is called centred.

\paragraph{Covariance operator}  
Let $X$ and $Y$ be two $H^k(\Theta)$-valued random elements such that $\E \|X\|_{H^k}^2 < \infty$ and $\E \|Y\|_{H^k}^2 < \infty$. 
Then their cross-covariance operator
\begin{align}\label{Eq:CovarianceOperatorDefinition}
	\Cov_k(X,Y) := \int_{\Omega} (X-\E[X]) \otimes (Y-\E[Y]) \,d\P \in N(H^k)
\end{align} 
is well-defined - here, as a Bochner integral in the space $N(H^k)$ (e.g., \cite{Rademacher2024AsymNormalitySpectralMeans} 
or \cite{Vakhania1987ProbabilityDistributionsOnBanachSpaces}). Specifically, $\Cov_k(X) := \Cov_k(X,X)$ is called the covariance operator of $X$.  
Recall that $N(H^k)$ denotes the space of nuclear operators on $H^k(\Theta)$ and $\otimes\colon H^k(\Theta) \times H^k(\Theta) \to N(H^k)$ is defined by 
$f\otimes g(u) = \<u,g\>_{H^k}f$ for $f,g,u \in H^k(\Theta)$.
The following Lemma establishes a crucial link to the kernel operators of higher order discussed in the previous Section \ref{Sec:KernelExpansions}.

\begin{Lem}\label{Lem:CovarianceOperator}
Let $X$ be a $H^k(\Theta)$-valued random element such that $\E\|X\|_{H^k}^2<\infty$.
\begin{enumerate}
\item[(a)] $\Cov_k(X) \in N(H^k)$ is self-adjoint, non-negative and satisfies $\|\Cov_k(X)\|_{N(H^k)} = \E\|X\|_{H^k}^2$.
\item[(b)] There exists a unique kernel $c_X \in H^{k,k}(\Theta\times\Theta)$, such that $\Cov_k(X) = T_{c_X,k}$; that is, for $f\in H^k(\Theta)$,
\begin{align}\label{Eq:CovarianceOperator}
	[\Cov_k(X)f](t) 
	=[T_{c_X,k}f](t)
	= \int_{\Theta} \sum_{|\alpha| \leq k} \partial^{(\alpha,0)}c_X(s,t) f^{(\alpha)}(s) \,ds \quad\text{a.e.}
\end{align}
Moreover, $\Cov_0(X^{(\alpha)}, X^{(\beta)}) = T_{\partial^{(\beta,\alpha)}c_X,0} \in N(L^2)$, for $|\alpha|, |\beta| \leq k$; that is, for $f\in L^2(\Theta)$,
\begin{align}\label{Eq:CovarianceOperatorDerivatives}
	[\Cov_0(X^{(\alpha)}, X^{(\beta)})f](t) 
	=[T_{\partial^{(\beta,\alpha)}c_X,0}f](t)
	= \int_{\Theta} \partial^{(\beta,\alpha)}c_X(s,t) f(s) \,ds \quad\text{a.e.}
\end{align}
and 
\begin{align}\label{Eq:CovarianceNuclearNorm}
	\| \Cov_k(X) \|_{N(H^k)}
	=\sum_{|\alpha|\leq k} \| \Cov_0(X^{(\alpha)}) \|_{N(L^2)}
\end{align}
\end{enumerate}
\end{Lem}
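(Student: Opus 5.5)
Without loss of generality assume $X$ is centred (otherwise replace $X$ by $X-\E[X]$). For part (a), I would work directly from the Bochner integral \eqref{Eq:CovarianceOperatorDefinition}. For $f,g\in H^k(\Theta)$, continuity of the evaluation $T\mapsto\<Tf,g\>_{H^k}$ on $N(H^k)$ gives
\begin{align*}
\<\Cov_k(X)f,g\>_{H^k}=\E\big[\<f,X\>_{H^k}\<X,g\>_{H^k}\big].
\end{align*}
This expression is symmetric in $f,g$, and for $g=f$ it equals $\E[\<f,X\>_{H^k}^2]\geq0$. Hence $\Cov_k(X)$ is self-adjoint and non-negative. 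Nuclearity follows from the Bochner integrability of $X\otimes X$ in $N(H^k)$, since $\E\|X\otimes X\|_N=\E\|X\|_{H^k}^2<\infty$. For a non-negative nuclear operator the nuclear norm equals the trace. Take any orthonormal basis $\{u_n\}$ and use monotone convergence and Parseval:
\begin{align*}
\|\Cov_k(X)\|_N=\sum_n\E\<X,u_n\>_{H^k}^2=\E\|X\|_{H^k}^2.
\end{align*}

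For part (b), since $N(H^k)\subset HS(H^k)$, Lemma \ref{Lem:CharacterizationHS-Opertors} yields a kernel $c_X\in H^{k,k}(\Theta\times\Theta)$ with $\Cov_k(X)=T_{c_X,k}$. Uniqueness follows because $\Phi$ in that proof is an isometry. To identify $c_X$ concretely, I would show $c_X(s,t)=\E[X(s)X(t)]$ as an element of $H^{k,k}$, with weak derivatives
\begin{align*}
\partial^{(\beta,\alpha)}c_X(s,t)=\E[X^{(\beta)}(s)X^{(\alpha)}(t)].
\end{align*}
The plan is as follows. First, the map $\omega\mapsto X(\omega)\otimes X(\omega)$, viewed in $H^{k,k}(\Theta\times\Theta)$ as the kernel $X(s)X(t)$, has norm $\|X\|_{H^k}^2$, exactly as in the computation in Remark \ref{Rem:EmbeddingsHSProperty}. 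Its Bochner expectation therefore exists in $H^{k,k}$. Second, $\Phi$ is a bounded linear isometry and commutes with Bochner integration. Third, $\Phi$ maps the kernel $X(s)X(t)$ to $X\otimes X$. Together these give $\Phi(\E[X(\cdot)X(\cdot)])=\Cov_k(X)$. Since derivative maps $\partial^{(\beta,\alpha)}\colon H^{k,k}\to L^2$ are bounded, they also commute with the expectation, which gives the formula for the weak derivatives above.

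To obtain \eqref{Eq:CovarianceOperatorDerivatives}, I would apply the same reasoning to the $L^2$-valued elements $X^{(\alpha)},X^{(\beta)}$, which are random elements by Lemma \ref{Lem:SobolevRandomElement}. The kernel of $\Cov_0(X^{(\alpha)},X^{(\beta)})$ is $\E[X^{(\beta)}(s)X^{(\alpha)}(t)]=\partial^{(\beta,\alpha)}c_X(s,t)$, using the $L^2$ ($k=0$) case of the identification above. A simpler alternative is to evaluate $\<\Cov_0(X^{(\alpha)},X^{(\beta)})f,g\>_{L^2}=\E[\<f,X^{(\beta)}\>\<X^{(\alpha)},g\>]$ and apply Fubini. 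Nuclearity of this operator comes from the same Bochner-integral argument with $\E\|X^{(\alpha)}\|\|X^{(\beta)}\|<\infty$.

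Finally, \eqref{Eq:CovarianceNuclearNorm} follows by applying part (a) twice. Part (a) at level $k$ gives $\|\Cov_k(X)\|_N=\E\|X\|_{H^k}^2=\sum_{|\alpha|\leq k}\E\|X^{(\alpha)}\|_{L^2}^2$. Part (a) at level $0$, applied to each $X^{(\alpha)}$, gives $\E\|X^{(\alpha)}\|_{L^2}^2=\|\Cov_0(X^{(\alpha)})\|_{N(L^2)}$.

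I expect the main obstacle to be the rigorous identification of the kernel: measurability of $\omega\mapsto X(\cdot)X(\cdot)$ into $H^{k,k}$, and the interchange of expectation with weak differentiation in the mixed Sobolev space. Measurability follows from continuity of the bilinear map $H^k\times H^k\to H^{k,k}$, $(u,v)\mapsto u(s)v(t)$. This bilinear map is also what turns the interchange into a routine property of Bochner integrals.
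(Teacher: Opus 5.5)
Your proof is correct, but part (b) follows a different (dual) route from the paper's. The paper cites the literature for (a). For the derivative kernels it stays on the operator side:
\begin{itemize}
\item it introduces the adjoint $\partial^{(\beta)*}\in\LL(L^2,H^k)$ and observes $X^{(\alpha)}\otimes X^{(\beta)}=\partial^{(\alpha)}\circ(X\otimes X)\circ\partial^{(\beta)*}$;
\item it pulls the bounded map $A\mapsto\partial^{(\alpha)}A\,\partial^{(\beta)*}$ through the Bochner integral;
\item it then computes the kernel of $\partial^{(\alpha)}T_{c_X,k}\,\partial^{(\beta)*}$ from the defining identity of $\partial^{(\beta)*}$ and Proposition~\ref{Prop:AssociatedOperator}(a).
\end{itemize}
You work on the kernel side instead. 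You realise $c_X$ as the $H^{k,k}$-valued Bochner mean of the random tensor kernel $X(s)X(t)$, and push $\Phi$, $\partial^{(\beta,\alpha)}$ and the $k=0$ isometry through the expectation. Both arguments rest on bounded linear maps commuting with Bochner integrals. They express the same identity $\partial^{(\alpha)}T_{h,k}\,\partial^{(\beta)*}=T_{\partial^{(\beta,\alpha)}h,0}$, read from the two sides of the kernel--operator isometry.

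Your version has two costs: measurability of $\omega\mapsto X(s)X(t)$ in $H^{k,k}(\Theta\times\Theta)$, and the product rule $\partial^{(\alpha,\beta)}\big(u(s)v(t)\big)=u^{(\alpha)}(s)v^{(\beta)}(t)$ (a routine Fubini check) behind the norm identity of your bilinear map. In return it gives the concrete description $c_X=\E[X(\cdot)X(\cdot)]$, with all derivatives $\partial^{(\beta,\alpha)}c_X=\E[X^{(\beta)}(s)X^{(\alpha)}(t)]$ at once. This essentially anticipates Lemma~\ref{Lem:MeasurableRepresentative}(c); only the Bochner-sense identity is needed here, and reading it pointwise would require a jointly measurable version. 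It also makes (a) self-contained. For \eqref{Eq:CovarianceNuclearNorm} the paper uses $\E\|X-f\|_{H^k}^2=\|\E[X]-f\|_{H^k}^2+\|\Cov_k(X)\|_{N(H^k)}$ with $f=\E[X]$. Your direct use of (a) at levels $k$ and $0$, together with $\E[X^{(\alpha)}]=\E[X]^{(\alpha)}$, is shorter and equivalent.

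One caveat about your reduction. Centring leaves $\Cov_k(X)$ and all of (b) unchanged, but it changes $\E\|X\|_{H^k}^2$, so it is not a valid WLOG for the trace identity in (a). What your computation actually proves is $\|\Cov_k(X)\|_{N(H^k)}=\E\|X-\E[X]\|_{H^k}^2$. The identity as literally stated fails for $X\equiv f\neq 0$, so it needs $X$ centred. The statement tacitly assumes this, and you should say so explicitly rather than present it as a WLOG.
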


\begin{proof}
(a) These properties of a covariance operator are well-known (e.g., \cite{Vakhania1987ProbabilityDistributionsOnBanachSpaces}, \cite{Bosq2000LinearProcesses} 
or \cite{Rademacher2024AsymNormalitySpectralMeans}, Theorem A.1).\\
(b) Because $N(H^k) \subset HS(H^k)$, Lemma \ref{Lem:CharacterizationHS-Opertors} ensures the existence of a unique kernel $c_X \in H^{k,k}(\Theta\times\Theta)$
such that $\Cov_k(X) = T_{c_X,k}$. To establish \eqref{Eq:CovarianceOperatorDerivatives}, assume without loss that $X$ is centred. Moreover, note that
$\partial^{(\alpha)} \in \LL(H^k,L^2)$, so there exists an adjoint operator $\partial^{(\alpha)*} \in \LL(L^2,H^k)$ such that 
$\< \partial^{(\alpha)}f,g \>_{L^2} = \< f, \partial^{(\alpha)*}g \>_{H^k}$ for $f\in H^k(\Theta)$ and $g\in L^2(\Theta)$.
This implies $\partial^{(\alpha)*}g \in H^k(\Theta)$ is the unique element satisfying for all $f\in H^k(\Theta)$,
\begin{align}\label{Eq:AdjointWeakDerivative}
	\int_\Theta f^{(\alpha)}(t)\,g(t)\,dt
	= \int_\Theta \sum_{|\beta|\leq k} f^{(\beta)}(t)\, [\partial^{(\alpha)*}g]^{(\beta)}(t) \,dt. 
\end{align}
Let $\partial^{(\alpha)}\otimes\partial^{(\beta)} \in \LL(HS(H^k), HS(L^2))$ denote the operator tensor product, defined via
$\partial^{(\alpha)}\otimes\partial^{(\beta)}(A) = \partial^{(\alpha)} \circ A \circ \partial^{(\beta)*}$ for $A\in HS(H^k)$ 
(e.g., \cite{Kadison1991FundamentalsTheoryOfOperatorAlgebras}). Here, we used that $HS(H^k,L^2)\cong H^{k,0}(\Theta \times \Theta) \supset H^{k,k}(\Theta \times \Theta)\cong HS(H^k)$ and 
$HS(L^2,H^k) \cong H^{0,k}(\Theta\times\Theta) \subset L^2(\Theta\times\Theta) \cong HS(L^2)$ by Lemma \ref{Lem:CharacterizationHS-Opertors}.
It follows by commutativity of the Bochner-integral with linear operators, that 
\begin{align*}
	\Cov_0(X^{(\alpha)}, X^{(\beta)})
	&= \int_{\Omega} X^{(\alpha)} \otimes X^{(\beta)} \,d\P\\
	&= \int_{\Omega} \partial^{(\alpha)}\otimes\partial^{(\beta)} (X \otimes X) \, d\P\\
	&= \partial^{(\alpha)}\otimes\partial^{(\beta)}(\Cov_k(X)).
\end{align*}
Applying this operator to an element $f\in L^2(\Theta)$ and using \eqref{Eq:AdjointWeakDerivative} therefore yields 
\begin{align*}
	[\Cov_0(X^{(\alpha)}, X^{(\beta)})f](t)
	&= \partial^{(\alpha)}\, \Cov_k(X) \,\partial^{(\beta)*}f](t)\\
	&= \partial^{(\alpha)} \int_{\Theta} \sum_{|\gamma|\leq k} \partial^{(\gamma,0)}c_X(s,t)\,[\partial^{(\beta)*}f(s)]^{(\gamma)} \,ds\\
	&= \partial^{(\alpha)} \int_{\Theta} \partial^{(\beta,0)}c_X(s,t)\,f(s)\,ds\\
	&= \int_{\Theta} \partial^{(\beta,\alpha)}c_X(s,t)\,f(s)\,ds. \qedhere
\end{align*}
Finally, note that (e.g., \cite{Rademacher2024AsymNormalitySpectralMeans}, Lemma A.2)
\begin{align*}
	\sum_{|\alpha|\leq k} \| \E[X^{(\alpha)}] - f^{(\alpha)} \|_{L^2}^2 + \| \Cov_0(X^{(\alpha)}) \|_{N(L^2)}
	&= \sum_{|\alpha|\leq k} \E \|X^{(\alpha)} - f^{(\alpha)}  \|_{L^2}^2\\
	&= \E \| X-f\|_{H^k}^2\\
	&= \| \E[X] - f \|_{H^k}^2 + \|\Cov_k(X)\|_{N(H^k)}.
\end{align*}
Minimizing over all $f\in H^k(\Theta)$, i.e. substituting $f=\E[X]$, then yields \eqref{Eq:CovarianceNuclearNorm}.
\end{proof}

\begin{Rem}[$H_{\kappa}(\Theta)$-valued random elements]\label{Rem:ExAndCovInH_kappa}
Recall from Remark \ref{Rem:H_Kappa} that there exists an isometric isomorphism $\J\colon H^k(\Theta) \to H_{\kappa}(\Theta)$. 
Accordingly, a $H^k(\Theta)$-valued random element $X$ can also be viewed as a $H_{\kappa}(\Theta)$-valued random element 
$\J X = (X^{(\alpha)})_{|\alpha|\leq k}$ with distribution $\P_{\J(X)} = \P_X \circ \J^{\ast}$. 
If $\E\|X\|_{H^k} < \infty$, the commutativity of the Bochner integral 
with bounded linear operators implies that the expectation of $\J X$ satisfies
\begin{align}\label{Eq:ExpectationPX}
	\E[\J X] = \J(\E [X]) = (\E[X]^{(\alpha)})_{|\alpha|\leq k} \in H_\kappa(\Theta).
\end{align}
Similarly, let $g=\J f\in H_\kappa(\Theta)$. Then, granted $\E\|X\|_{H^k}^2 < \infty$, the covariance operator of $\J X$ satisfies, 
\begin{align}\label{Eq:CovariancePX2}
	\Cov(\J X)g 
	&= \J \,\Cov_k(X) \J^*\J f
	= \J (\Cov_k(X)f)\nonumber\\
	&= \Big( \sum_{|\alpha|\leq k} \int_{\Theta} \partial^{(\alpha,\beta)}c_X(s,\cdot)\,f^{(\alpha)}(s) \,ds \Big)_{|\beta|\leq k}\nonumber\\
	&= \Big( \sum_{|\alpha|\leq k} \Cov_0(X^{(\beta)}, X^{(\alpha)}) f^{(\alpha)} \Big)_{|\beta|\leq k}
\end{align}
with respect to $\|\cdot\|_{L^2_\kappa}$. 
Consequently, 
\begin{align}\label{Eq:CovariancePX}
	\Cov(\J X) = (\Cov_0(X^{(\beta)},X^{(\alpha)}))_{|\alpha|,|\beta| \leq k} \in N(H_\kappa),
\end{align}
where the operator matrix on the right-hand side acts according to \eqref{Eq:CovariancePX2}.
\end{Rem}

It is well-known that the law of a Gaussian random element is fully characterized by its expectation and covariance operator (e.g.,
\cite{Vakhania1987ProbabilityDistributionsOnBanachSpaces}, Proposition 2.8). In the context of Sobolev spaces, this characterization takes the 
following form:

\begin{Lem}[Gaussian elements]\label{Lem:GaussianElements}
Let $X\colon \Omega \to H^k(\Theta)$ be a map. The following statements are equivalent: 
\begin{enumerate}
	\item[$\mathrm{(i)}$] $X$ is a Gaussian $H^k(\Theta)$-valued random element with expectation $\E[X] \in H^k(\Theta)$ 
	and covariance operator $\Cov_k(X) \in N(H^k)$ given by \eqref{Eq:CovarianceOperator}.
	
	\item[$\mathrm{(ii)}$] $\<X,f\>_{H^k} \sim \NN(\< \E[X], f \>_{H^k}, \< \Cov_k(X)f,f \>_{H^k})$ for all $f\in H^k(\Theta)$.
	
	\item[$\mathrm{(iii)}$]  $PX=(X^{(\alpha)})_{|\alpha|\leq k}$ is a Gaussian $H_{\kappa}(\Theta)$-valued random element with expectation 
	$\E[PX] = (\E[X]^{(\alpha)})_{|\alpha|\leq k}\in H_\kappa(\Theta)$ and covariance operator 
	$\Cov(PX) = (\Cov_0(X^{(\beta)},X^{(\alpha)}))_{|\alpha|,|\beta|\leq k} \in N(H_{\kappa})$ 
	given by \eqref{Eq:CovariancePX}.
\end{enumerate}
In the affirmative case, this is denoted by $X \sim \NN_{H^k}(\E[X], \Cov_k(X))$.
\end{Lem}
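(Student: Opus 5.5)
The plan is to prove the cycle (i)$\Rightarrow$(ii)$\Rightarrow$(iii)$\Rightarrow$(i), using two facts: a Gaussian measure on a separable Hilbert space is determined by its one-dimensional marginals, and bounded linear images of Gaussian elements are Gaussian. Throughout we use the Riesz identification $H^k(\Theta)'\cong H^k(\Theta)$, so every continuous functional on $H^k(\Theta)$ has the form $\varphi=\<\cdot,f\>_{H^k}$. In the same way every functional on $H_\kappa(\Theta)$ has the form $\<\cdot,\J f\>_{L^2_\kappa}$, with $\J$ the isometric isomorphism of Remark \ref{Rem:H_Kappa}. (The statement writes $PX$ for $\J X$; we use $\J$.)

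\emph{(i)$\Rightarrow$(ii).} If $X$ is Gaussian, then $\<X,f\>_{H^k}$ is normal by definition, since it is a continuous functional of $X$. Gaussian measures have finite moments of all orders (Fernique), so $\E\|X\|_{H^k}^2<\infty$, and the expectation and covariance operator exist. The mean of $\<X,f\>_{H^k}$ is $\<\E[X],f\>_{H^k}$ because the Bochner integral commutes with continuous functionals, by \eqref{Eq:ExpectationRelation}. Applying $f$ on both sides of \eqref{Eq:CovarianceOperatorDefinition} gives the variance
\begin{align*}
\E\<X-\E X,f\>_{H^k}^2=\<\Cov_k(X)f,f\>_{H^k}.
\end{align*}
The kernel representation \eqref{Eq:CovarianceOperator} then comes from Lemma \ref{Lem:CovarianceOperator}(b).

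\emph{(ii)$\Rightarrow$(iii).} Measurability of $X$ follows from (ii) and the separability of $H^k$, or equivalently from Lemma \ref{Lem:SobolevRandomElement}. Since $\J$ is a bounded linear map, $\J X$ is a random element of $H_\kappa(\Theta)$. For $g=\J f$ we have $\<\J X,g\>_{L^2_\kappa}=\<X,\J^*\J f\>_{H^k}=\<X,f\>_{H^k}$, which is normal by (ii). Hence every marginal of $\J X$ is Gaussian, and $\J X$ is a Gaussian element. Its expectation and covariance operator are computed by \eqref{Eq:ExpectationPX} and \eqref{Eq:CovariancePX2}--\eqref{Eq:CovariancePX}. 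These formulas require $\E\|X\|_{H^k}^2<\infty$, which we again get from Fernique's theorem.

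\emph{(iii)$\Rightarrow$(i).} We have $X=\J^{-1}(\J X)$, where $\J^{-1}=\J^*|_{H_\kappa}$ is bounded and linear, so $X$ is a Gaussian random element of $H^k$. For its moments, $\E[X]=\J^{-1}\E[\J X]$, and by \eqref{Eq:CovariancePX2}
\begin{align*}
\Cov_k(X)=\J^{-1}\Cov(\J X)\J,
\end{align*}
whose kernel is $c_X$ by Lemma \ref{Lem:CovarianceOperator}.

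\emph{Main obstacle.} The hardest point is bookkeeping rather than depth: making sure the operator-matrix representation in (iii) is consistent with $\Cov_k(X)=T_{c_X,k}$, including the index order $(\beta,\alpha)$ against $(\alpha,\beta)$ coming from Lemma \ref{Lem:AdjointOperator}. A secondary point is justifying finite second moments, which Fernique's theorem settles.
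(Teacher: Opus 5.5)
Your proposal is correct and uses the same ingredients as the paper's proof: Fernique's theorem for the moments, the identity $\langle \J X,\J f\rangle_{L^2_\kappa}=\langle X,f\rangle_{H^k}$ together with Riesz representation, the formulas of Remark \ref{Rem:ExAndCovInH_kappa}, and $X=\J^*\J X$ for the return direction. The differences are only organizational: the paper proves (i)$\Leftrightarrow$(ii) and (i)$\Leftrightarrow$(iii) with (i) as the hub instead of a cycle, and in (ii)$\Rightarrow$(i) it allows the marginal parameters to be an arbitrary pair $(\mu,T)$ and checks that they equal $\E[X]$ and $\Cov_k(X)$, which is automatic under your literal reading of (ii).
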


The assumption $\E\|X\|_{H^k}^2<\infty$ is quite useful as it implies existence of a well-behaved representative for $X$.

\begin{Lem}\label{Lem:MeasurableRepresentative}
Let $X$ be a $H^k(\Theta)$-valued random element such that $\E\|X\|_{H^k}^2 < \infty$.
Then there exists a $\A\otimes \B(\Theta)$-measurable function 
$Y\colon \Omega \times \Theta \to \R$, unique only up to a $\P\otimes \lambda^d$-null set, such that
\begin{enumerate}
	\item[(a)] $\Theta \ni t \mapsto Y(\omega,t)$ is a representative of $X(\omega) \in H^k(\Theta)$ for almost every $\omega \in\Omega$.
	\item[(b)] $\Theta\ni t\mapsto \E[Y(t)] = \mu_Y(t)$ is a representative of $\E[X] \in H^k(\Theta)$ and $\mu_Y^{(\alpha)}(t) = \E[Y^{(\alpha)}(t)]$ a.e.
	\item[(c)] $\Theta\times\Theta\ni (s,t) \mapsto \Cov(Y(s),Y(t)) = \gamma_{Y}(s,t)$ is a representative of $c_X \in H^{k,k}(\Theta \times \Theta)$ and 
		$\partial^{(\alpha,\beta)}\gamma_{Y}(s,t) = \Cov[ Y^{(\alpha)}(s), Y^{(\beta)}(t) ]$ a.e.
\end{enumerate}
\end{Lem}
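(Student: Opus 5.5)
The plan is to build $Y$ from a countable orthonormal expansion and then read off (a)--(c) from $L^2$-convergence in the product space. Fix an orthonormal basis $\{u_n\mid n\in\N\}$ of $H^k(\Theta)$, which exists by separability. For each $n$ choose a Borel representative of $u_n$ and of $u_n^{(\alpha)}$, $|\alpha|\leq k$. The scores $Z_n:=\<X,u_n\>_{H^k}$ are $\A$-measurable because $X$ is a random element. Moreover $\sum_n \E|Z_n|^2=\E\|X\|_{H^k}^2<\infty$ by Parseval and monotone convergence.

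Set $S_N(\omega,t):=\sum_{n\leq N}Z_n(\omega)u_n(t)$, which is jointly $\A\otimes\B(\Theta)$-measurable. Its derivatives are $S_N^{(\alpha)}(\omega,t)=\sum_{n\leq N}Z_n(\omega)u_n^{(\alpha)}(t)$. Fubini together with orthonormality gives
\begin{align*}
\sum_{|\alpha|\leq k}\E\int_\Theta |S_N^{(\alpha)}-S_M^{(\alpha)}|^2\,dt=\sum_{M<n\leq N}\E|Z_n|^2 .
\end{align*}
Hence $(S_N^{(\alpha)})_N$ is Cauchy in $L^2(\Omega\times\Theta,\P\otimes\lambda^d)$ for every $|\alpha|\leq k$. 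Let $Y_\alpha$ denote the limits, and put $Y:=Y_0$. These limits are jointly measurable, unique up to $\P\otimes\lambda^d$-null sets, and can be obtained as a.e.\ limits along a common subsequence. Uniqueness of $Y$ in general follows from Fubini: two jointly measurable functions that agree with $X(\omega)$ a.e.\ in $t$ for a.e.\ $\omega$ agree $\P\otimes\lambda^d$-a.e.

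\emph{Step (a).} For a.e.\ $\omega$, $S_N(\omega,\cdot)\to X(\omega)$ in $H^k$, since $\sum_n|Z_n(\omega)|^2<\infty$ a.s. Fubini shows that along the subsequence $S_N^{(\alpha)}(\omega,\cdot)\to Y_\alpha(\omega,\cdot)$ in $L^2(\Theta)$ for a.e.\ $\omega$. Therefore $Y(\omega,\cdot)$ represents $X(\omega)$, and $Y_\alpha(\omega,\cdot)$ represents $X(\omega)^{(\alpha)}$. This justifies writing $Y^{(\alpha)}:=Y_\alpha$ as a jointly measurable version of the weak derivatives.

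\emph{Step (b).} Because $\E\int|Y_\alpha|^2<\infty$, Fubini makes $\mu_\alpha(t):=\E[Y_\alpha(t)]$ defined a.e.\ and square integrable. For $\varphi\in C_c^\infty(\Theta)$, Fubini and \eqref{Eq:WeakDerivativeFunction} applied pathwise give
\begin{align*}
\int\mu_0\,\varphi^{[\alpha]}\,dt=\E\int Y\varphi^{[\alpha]}\,dt=(-1)^{|\alpha|}\int\mu_\alpha\varphi\,dt .
\end{align*}
So $\mu_Y:=\mu_0\in H^k(\Theta)$ with $\mu_Y^{(\alpha)}=\mu_\alpha$. Testing against an arbitrary $f\in H^k$ and comparing with \eqref{Eq:ExpectationRelation} identifies $\mu_Y=\E[X]$.

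\emph{Step (c).} This is done identically. Set $g_{\alpha\beta}(s,t):=\Cov(Y_\alpha(s),Y_\beta(t))$. It lies in $L^2(\Theta\times\Theta)$ by Cauchy--Schwarz, since
\begin{align*}
\int\int|g_{\alpha\beta}|^2\leq\E\|Y_\alpha\|_{L^2}^2\,\E\|Y_\beta\|_{L^2}^2 .
\end{align*}
Testing with $\varphi\in C_c^\infty(\Theta\times\Theta)$ and applying Fubini shows that $\partial^{(\alpha,\beta)}g_{00}=g_{\alpha\beta}$. For this step I would test first with product functions $\varphi(s)\psi(t)$, where the pathwise weak derivative applies directly, and then pass to general test functions by density of linear combinations of products in $C_c^\infty$ (in the $C^{2k}$ sense on compacts). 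I expect this density step to be the main technical point. Finally, take $f\in H^k$, assume wlog that $X$ is centred, and use $\Cov_k(X)f=\E[\<X,f\>_{H^k}X]$ together with Fubini. This gives $\Cov_k(X)=T_{g_{00},k}$, and uniqueness in Lemma \ref{Lem:CharacterizationHS-Opertors} yields $g_{00}=c_X$.
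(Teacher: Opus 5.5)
Your proposal is correct and follows the same architecture as the paper's proof: jointly measurable versions of all weak derivatives, Fubini against the expectation relation for (b), and product test functions plus uniqueness of the kernel from Lemma~\ref{Lem:CharacterizationHS-Opertors} for (c). The differences are minor. First, you build the jointly measurable version directly from an orthonormal-basis expansion of $H^k(\Theta)$, whereas the paper cites the identification $L^2(\Omega;L^2(\Theta))\cong L^2(\Omega\times\Theta)$ (Proposition 1.2.24 in Hyt\"onen et al.) separately for each $X^{(\alpha)}$. Second, you explicitly supply the density of tensor products in $C_c^\infty(\Theta\times\Theta)$ needed to pass from test functions $\varphi(s)\psi(t)$ to general ones, a step the paper's proof leaves implicit.
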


\begin{proof}
(a) It follows from Lemma \ref{Lem:SobolevRandomElement} that $X^{(\alpha)}\colon \Omega \to L^2(\Theta)$ is $\A$-$\B(L^2)$-measurable for all $|\alpha|\leq k$.
Moreover, by assumption, $\E \|X^{(\alpha)}\|_{L^2}^2 < \infty$. By Proposition 1.2.24 in \cite{Hytonen2016AnalysisInBanachSpaces}, this implies 
existence of a
$Y^{(\alpha)} \in L^2(\Omega \times \Theta) = \{ f\colon \Omega \times \Theta \to \R\mid f~ \text{is}~ \A\otimes\B(\Theta)\text{-measurable and} 
\int_{\Omega}\int_{\Theta} |f(\omega,t)|^2 \,dt\,d\P(\omega) <\infty \}$ such that $Y^{(\alpha)}(\omega,\cdot)$ is a representative for 
$X^{(\alpha)}(\omega) \in L^2(\Theta)$ for all $\omega \in \Omega\backslash N_{\alpha}$, where $\P(N_\alpha) = 0$.
Let $N= \bigcup_{|\alpha|\leq k} N_\alpha$, then  
$\< Y^{(\alpha)}(\omega,\cdot), \varphi \>_{L^2} = (-1)^{|\alpha|} \< X(\omega),\varphi^{[\alpha]} \>_{L^2}$ for 
$\omega \in \Omega\backslash N$ and $|\alpha|\leq k$.
This shows, $(Y^{(\alpha)}(\omega,\cdot))_{|\alpha|\leq k}$ is a representative for $\J X(\omega) = (X^{(\alpha)}(\omega))_{|\alpha|\leq k} \in H_{\kappa}(\Theta)$ 
for all $\omega \in \Omega\backslash N$, see Remark \ref{Rem:H_Kappa}.  
Accordingly, $Y(\omega,\cdot) = \J^*( (Y^{(\alpha)}(\omega,\cdot))_{|\alpha|\leq k} )$
is a representative for $X(\omega) \in H^k(\Theta)$ for all $\omega \in \Omega\backslash N$.\\
(b) Substituting $Y$ into the left hand side of \eqref{Eq:ExpectationRelation} and applying Fubini shows that $t\mapsto \E[Y^{(\alpha)}(t)]$ is a representative 
for $\E[X]^{(\alpha)}$. Therefore, $\E[Y^{(\alpha)}(t)] = \E[X]^{(\alpha)}(t) = \E[X^{(\alpha)}](t) = \E[Y^{(\alpha)}](t)$ for almost every $t\in\Theta$.\\
(c) We can assume, without loss, that $X$ is centred. It follows from \eqref{Eq:DerivativeThf}, \eqref{Eq:CovarianceOperatorDefinition} 
and \eqref{Eq:CovarianceOperator}, that for any $f,g\in H^k(\Theta)$, 
\begin{align*}
	\sum_{|\alpha|,|\beta|\leq k} \int_{\Theta}\int_{\Theta} \partial^{(\alpha,\beta)}c_X(s,t) f^{(\alpha)}(s) g^{(\beta)}(t) \,ds\,dt
	&=\sum_{|\beta|\leq k} \int_\Theta [\Cov_k(X)f]^{(\beta)}(t)\, g^{(\beta)}(t) \,dt\nonumber\\
	&= \< \Cov_k(X)f,g \>_{H^k}\nonumber\\
	&= \E \< \<f,X\>_{H^k} X, g \>_{H^k}\nonumber\\
	&= \E \sum_{|\beta|\leq k} \int_\Theta \Bigg( \sum_{|\alpha|\leq k} \int_\Theta X^{(\alpha)}(s) X^{(\beta)}(t) f^{(\alpha)}(s) \,ds \Bigg) 
	g^{(\beta)}(t) \,dt. 
\end{align*}
Substituting $Y$ into the right hand side and applying Fubini yields
\begin{align}\label{Eq:SobolevCovarianceEquation}
	\sum_{|\alpha|,|\beta|\leq k} \int_{\Theta}\int_{\Theta} \partial^{(\alpha,\beta)}c_X(s,t) f^{(\alpha)}(s) g^{(\beta)}(t) \,ds\,dt
	= \sum_{|\alpha|,|\beta|\leq k} \int_{\Theta}\int_{\Theta} \E[Y^{(\alpha)}(s) Y^{(\beta)}(t)] f^{(\alpha)}(s) g^{(\beta)}(t) \,ds\,dt.
\end{align}
Moreover, also by Fubini, for $\varphi,\psi \in C_c^{\infty}(\Theta)$, 
\begin{align*}
	 \int_{\Theta}\int_{\Theta} \E[Y^{(\alpha)}(s) Y^{(\beta)}(t)] \varphi(s) \psi(t) \,ds\,dt
	 = (-1)^{|\alpha|+|\beta|} \int_{\Theta}\int_{\Theta} \E[ Y(s) Y(t) ] \varphi^{[\alpha]}(s) \psi^{[\beta]}(t) \,ds\,dt,
\end{align*}
which shows, that the weak derivatives of $\gamma_Y(s,t)=\E[Y(s) Y(t)]$ have representatives $\E[Y^{(\alpha)}(s) Y^{(\beta)}(t)]$, i.e.  
$\partial^{(\alpha,\beta)} c_Y(s,t) = \E[Y^{(\alpha)}(s) Y^{(\beta)}(t)]$ almost everywhere. The assertion thus follows from \eqref{Eq:SobolevCovarianceEquation}
\end{proof}

To avoid a overly pedantic notation, we will adhere to a common practice and identify a $H^k(\Theta)$-valued random element $X$ with its
representative $Y$ from Lemma \ref{Lem:MeasurableRepresentative}, as $\E\|X\|_{H^k}^2 < \infty$ will be our minimum requirement in the following.

\subsection{Weakly Differentiable Random Fields}
The abstract notion of a $H^k(\Theta)$-valued random element does not, by itself, illustrate how such an object can be constructed in practice.
However, Lemma \ref{Lem:MeasurableRepresentative} suggest that a random field with finite variance and sample paths in $H^k(\Theta)$ may act as a representative for such a random element. 
It is therefore desirable to identify sufficient conditions for random fields that guarantee
the required sample path properties, specifically weak differentiability.
Previous investigations in this direction suggest that the regularity of the sample paths is primarily, tough not exclusively, determined by the 
second order dynamics of the process. 
Accordingly, given the scope of this paper, only conditions that are directly related 
to the covariance kernel are discussed in the following. 
Regarding weak differentiability, extensive studies have been conducted: Scheuerer 
\cite{Scheuerer2010RegularitySecondOrderFields} investigated general second-order random fields, while more recently, Henderson 
\cite{Henderson2024SobolevRegularityGaussianFields} refined these results for Gaussian fields by 
sharpening them from sufficient to necessary.\\ 
In this context, we recall that a measurable random field $(X_t)_{t\in\Theta}$ is a $\B(\Theta)\otimes\A$-measurable map 
$\Theta \times \Omega \ni (t,\omega) \mapsto X_t(\omega) \in \R$.
If $\E[|X_t|]<\infty$ for all $t\in\Theta$, the mean function $\mu_X(t) := \E[X_t]$, $t\in \Theta$, is well-defined 
and measurable; the field is called centred if $\mu_X = 0$. 
Furthermore, if $\Var(X_t) < \infty$ for all $t\in\Theta$, the covariance kernel $\gamma_X(s,t) := \Cov(X_s,X_t)$, $s,t \in \Theta$,
is also well-defined and measurable. Specifically, $\sigma_X(t) = \sqrt{\gamma_X(t,t)}$ is called standard deviation function. 
In this case, $(X_t)_{t\in\Theta}$ is referred to as a second-order random field. A random field is called 
Gaussian, if for any $n\in \N$ and $t_1,\ldots,t_n \in \Theta$, the vector $(X_{t_1},\ldots,X_{t_n})$ follows a multivariate normal distribution.
We then write $(X_t)_{t\in\Theta} \sim \mathcal{GP}(\mu_X,\gamma_X)$. 

\begin{Prop}[\cite{Scheuerer2010RegularitySecondOrderFields}, \cite{Henderson2024SobolevRegularityGaussianFields}]\label{Prop:ScheuererHenderson}
Let $(X_t)_{t\in\Theta}$ be a measurable centred second order random field with covariance kernel $\gamma_X$. 
The sample paths of $(X_t)_{t\in\Theta}$ are in $H^k(\Theta)$ a.s.,
\begin{enumerate}
	\item[(a)] if for all $|\alpha| \leq k$, the partial derivative $\partial^{[\alpha,\alpha]}\gamma_X$ exists and
	is continuous on the diagonal of $\Theta \hspace{-0.05cm} \times \hspace{-0.05cm} \Theta$ such that
	\begin{align*}
		\int_{\Theta} \partial^{[\alpha,\alpha]}\gamma_X(t,t) \,dt < \infty.
	\end{align*}
	
	\item[(b)] if and only if: $(X_t)_{t\in\Theta} \sim \mathcal{GP}(\mu_X,\gamma_X)$ with $\sigma_X \in L^1_{loc}(\Theta)$. 
	Moreover, for all $|\alpha| \leq k$ the weak derivative $\partial^{(\alpha,\alpha)}\gamma_X$ exists and is in 
	$L^2(\Theta \times \Theta)$ such that the associated $L^2$-integral operator
	\begin{align}\label{Eq:NuclearityConditionL2}
		[T_{\partial^{(\alpha,\alpha)}\gamma_X,0}f](t) = \int_\Theta \partial^{(\alpha,\alpha)}\gamma_X(s,t) f(s) \,ds \quad \text{a.e.}
	\end{align}
	is nuclear.
\end{enumerate}
\end{Prop}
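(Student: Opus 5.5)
For (a), the plan is to show that each weak derivative $X^{(\alpha)}$, $|\alpha|\leq k$, exists pathwise as an $L^2$-limit of mean-square difference quotients and lies in $L^2(\Theta)$ almost surely. By induction on $|\alpha|$, I first construct mean-square derivative fields $X^{[\alpha]}_t$. For a unit direction $e_i$, set $D_h X_t = (X_{t+he_i}-X_t)/h$. Then
\begin{align*}
	\E|D_hX_t - D_{h'}X_t|^2
\end{align*}
is a second-order difference of $\gamma_X$ near the diagonal point $(t,t)$. Existence and diagonal continuity of $\partial^{[e_i,e_i]}\gamma_X$ make this Cauchy (the classical Loève criterion), so $X^{[e_i]}_t$ exists in $L^2(\P)$ with $\E|X^{[e_i]}_t|^2 = \partial^{[e_i,e_i]}\gamma_X(t,t)$. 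Iterating gives $X^{[\alpha]}_t$ with
\begin{align*}
	\E|X^{[\alpha]}_t|^2=\partial^{[\alpha,\alpha]}\gamma_X(t,t).
\end{align*}

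Next, I need a jointly measurable version of $X^{[\alpha]}$. I would take it as an $L^2(\P\otimes\lambda^d)$-limit of measurable difference quotients, passing along a subsequence. Tonelli and the trace condition then give
\begin{align*}
	\E\int_\Theta |X^{[\alpha]}_t|^2\,dt=\int_\Theta\partial^{[\alpha,\alpha]}\gamma_X(t,t)\,dt<\infty,
\end{align*}
so the paths satisfy $X^{[\alpha]}\in L^2(\Theta)$ a.s. It remains to identify $X^{[\alpha]}$ as the weak derivative of the path. For $\varphi\in C_c^\infty(\Theta)$, integration by parts for difference quotients yields
\begin{align*}
	\int D_hX\,\varphi = -\int X\,D_{-h}\varphi .
\end{align*}
Letting $h\to0$ in $L^1(\P)$ gives the weak-derivative identity a.s. for each $\varphi$. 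Taking a countable dense family of test functions, the identity holds simultaneously a.s. for all of them, hence for all $\varphi$. \textbf{The main obstacle} is this step: controlling the $L^2(\P\otimes\lambda)$-convergence of the difference quotients near $\partial\Theta$ with only diagonal continuity. I would localize to compact $K\subset\subset\Theta$ and then exhaust $\Theta$.

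For (b), sufficiency follows the same route. Weak differentiability of $\gamma_X$ plus nuclearity of $T_{\partial^{(\alpha,\alpha)}\gamma_X,0}$ gives the finite trace $\E\|X^{(\alpha)}\|_{L^2}^2 = \|T_{\partial^{(\alpha,\alpha)}\gamma_X,0}\|_{N(L^2)}$, with mollification in place of pointwise differentiation. For necessity, with $X\in H^k$ a.s. and Gaussian, the key is Fernique's theorem. Since $X$ is a Gaussian element of the separable Hilbert space $H^k(\Theta)$ (measurability via Lemma \ref{Lem:SobolevRandomElement}), we get $\E\|X\|_{H^k}^2<\infty$. Lemma \ref{Lem:MeasurableRepresentative} then identifies $\partial^{(\alpha,\alpha)}\gamma_X$ as the kernel of $\Cov_0(X^{(\alpha)})$, and Lemma \ref{Lem:CovarianceOperator} gives nuclearity. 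The condition $\sigma_X\in L^1_{loc}$ is what legitimizes treating the measurable field as an $L^2_{loc}$-valued Gaussian element. Since the statement is cited from \cite{Scheuerer2010RegularitySecondOrderFields,Henderson2024SobolevRegularityGaussianFields}, I would ultimately defer the delicate measure-theoretic details to those sources.
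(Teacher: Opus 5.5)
The paper gives no proof of this proposition; it quotes (a) from Scheuerer and (b) from Henderson, so your outline has to carry the argument itself. For (a) it does. The route through Loève's criterion, a jointly measurable version, and integration by parts for difference quotients is standard. The obstacle you flag disappears after localizing: on $K\subset\subset\Theta$, continuity of $\partial^{[e_i,e_i]}\gamma_X$ at the diagonal points gives, via the mean value theorem, a bound on $\E|D_hX_t|^2$ uniform in $t\in K$ and small $h$. Dominated convergence then gives convergence in $L^2(\P\otimes\lambda^d)$ on $K$, and Tonelli with the trace condition globalizes.

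\textbf{Gap in necessity for (b).} You invoke Fernique ``since $X$ is a Gaussian element of $H^k(\Theta)$''. But the hypothesis only gives normal finite-dimensional distributions of the field. That $\langle X,f\rangle_{H^k}$ is normal for every $f\in H^k(\Theta)$ is exactly what has to be shown. This is where $\sigma_X\in L^1_{loc}(\Theta)$ is really used; its role is not to put paths in $L^2_{loc}$, since here they are already in $H^k$.

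One way to close it: for $\psi\in C_c^\infty(\Theta)$ let $Z_\psi=\int_\Theta X_t\psi(t)\,dt$.
\begin{itemize}
\item Minkowski's integral inequality gives $(\E Z_\psi^2)^{1/2}\le\int_\Theta\sigma_X|\psi|\,dt<\infty$.
\item For every $W\in L^2(\P)$ orthogonal to $G=\ol{\Span}\{X_t\mid t\in\Theta\}$, Fubini gives $\E[Z_\psi W]=\int_\Theta\E[X_tW]\,\psi(t)\,dt=0$. Hence $Z_\psi\in G$, and all $Z_\psi$ are jointly centred Gaussian.
\item On the full-measure set of $H^k$-paths, $\langle X^{(\alpha)},\varphi\rangle_{L^2}=(-1)^{|\alpha|}Z_{\varphi^{[\alpha]}}$.
\item For general $f$, $\langle X,f\rangle_{H^k}$ is the a.s. limit of $\sum_{|\alpha|\le k}\langle X^{(\alpha)},\varphi_{n,\alpha}\rangle_{L^2}$ with test functions $\varphi_{n,\alpha}\to f^{(\alpha)}$ in $L^2(\Theta)$. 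That is an a.s. limit of centred Gaussians, hence Gaussian.
\end{itemize}
Corollary \ref{Cor:GaussianProcess} cannot simply be quoted here, because it assumes $\gamma_X\in H^{k,k}(\Theta\times\Theta)$, which is the conclusion you are after. Also, the bound $|\gamma_X(s,t)|\le\sigma_X(s)\sigma_X(t)$ is what makes $\gamma_X\in L^1_{loc}(\Theta\times\Theta)$, so that $\partial^{(\alpha,\alpha)}\gamma_X$ is meaningful at all. Only after this step do Fernique, Lemma \ref{Lem:MeasurableRepresentative} and Lemma \ref{Lem:CovarianceOperator} finish the argument as you describe.

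\textbf{Sufficiency for (b) is not ``the same route''.} Loève's criterion needs classical derivatives of $\gamma_X$ near the diagonal, which (b) does not provide. The identity $\E\|X^{(\alpha)}\|_{L^2}^2=\|T_{\partial^{(\alpha,\alpha)}\gamma_X,0}\|_{N(L^2)}$ presupposes that $X^{(\alpha)}$ exists, which is the whole content. One way to construct it, without needing Gaussianity:
\begin{itemize}
\item Put $\xi(\varphi)=(-1)^{|\alpha|}Z_{\varphi^{[\alpha]}}$. Fubini (again via $\sigma_X\in L^1_{loc}$) and the definition of the weak derivative give $\E[\xi(\varphi)\xi(\psi)]=\langle T\varphi,\psi\rangle_{L^2}$ with $T=T_{\partial^{(\alpha,\alpha)}\gamma_X,0}$.
\item So $T\ge0$, and $\xi$ extends boundedly to $L^2(\Theta)$.
\item For an orthonormal basis $(u_n)\subset C_c^\infty(\Theta)$ of $L^2(\Theta)$, $\E\sum_n\xi(u_n)^2=\tr(T)<\infty$. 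Hence $V=\sum_n\xi(u_n)u_n\in L^2(\Theta)$ a.s., with $\xi(\varphi)=\langle V,\varphi\rangle_{L^2}$ a.s. for each $\varphi$.
\item A countable family of test functions, dense in the $C^{|\alpha|}$-sense with common compact supports, identifies $V$ pathwise as the weak derivative $X^{(\alpha)}$.
\end{itemize}
Your mollification idea can also be completed. It needs the uniform bound $\E\|\partial^{[\alpha]}(X*\rho_\epsilon)\|_{L^2(K)}^2\le\tr(T)$, which follows from $T\ge0$ and the fact that convolution with $\rho_\epsilon$ is an $L^2$-contraction. It then needs a weak-compactness argument in $L^2(\Omega\times K)$. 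Neither appears in the proposal.
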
 

At this point, we may emphasize what almost sure $L^2$-weak differentiability of the paths of $(X_t)_{t\in\Theta}$ means precisely:
for each $|\alpha| \leq k$ there exists a measurable process $(X_t^{(\alpha)})_{t\in\Theta}$ with sample paths in $L^2(\Theta)$ almost surely, 
such that,
\begin{align}\label{Eq:AlmostSureWeakDiffPaths}
	\int_{\Theta} X_t\, \varphi^{[\alpha]}(t) \,dt 
	= (-1)^{|\alpha|} \int_{\Theta} X_t^{(\alpha)} \, \varphi(t)\,dt \quad\text{a.s.}
\end{align} 
for all $\varphi \in C_c^{\infty}(\Theta)$. 
Note, that we can always redefine $(X_t^{(\alpha)})_{t\in\Theta}$ on the null set, where the sample paths fail square-integrability, to
obtain an indistinguishable measurable modification whose sample paths are exclusively in $L^2(\Theta)$. For convenience, this modification is 
also denoted by $(X_t^{(\alpha)})_{t\in\Theta}$ as from now on, we will only consider these modifications. 
The following lemma confirms that random fields with almost surely weakly 
differentiable sample paths allow for the interpretation as random elements in $H^k(\Theta)$.

\begin{Lem}\label{Lem:RandomFieldsAsSobolevElements}
Let $(X_t)_{t\in\Theta}$ be a measurable centred random field.
Suppose the sample paths of $(X_t)_{t\in\Theta}$ are in $H^k(\Theta)$ a.s., 
then $\Omega \ni \omega \mapsto X(\omega) = (X_t(\omega))_{t\in\Theta}$ defines a $H^k(\Theta)$-valued random element. 
Moreover, if $\E[X_t^2] <\infty$ for all $t\in\Theta$ and 
$\gamma_X \in H^{k,k}(\Theta\times\Theta)$, then $\E\|X\|_{H^k}^2 <\infty$ and $\Cov_k(X) = T_{\gamma_X,k} \in N(H^k)$.
\end{Lem}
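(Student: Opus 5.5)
The proof splits into a measurability part and a second-moment part.

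\emph{Measurability.} First, modify $X$ on the $\P$-null set of $\omega$ where the path fails to lie in $H^k(\Theta)$, say by setting it to $0$ there. Completeness of $(\Omega,\A,\P)$ keeps the modified field measurable. Then $X(\omega)\in H^k(\Theta)$ for every $\omega$, and by \eqref{Eq:AlmostSureWeakDiffPaths} the maps $X^{(\alpha)}(\omega)=(X_t^{(\alpha)}(\omega))_{t\in\Theta}$ are its weak derivatives.

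By Lemma~\ref{Lem:SobolevRandomElement}, it suffices to show that each $X^{(\alpha)}\colon\Omega\to L^2(\Theta)$ is $\A$-$\B(L^2)$-measurable. Since $L^2(\Theta)$ is separable and $C_c^\infty(\Theta)$ is separating, this reduces to measurability of
\[
\omega\mapsto\int_\Theta X_t^{(\alpha)}(\omega)\,\varphi(t)\,dt
\]
for each $\varphi\in C_c^\infty(\Theta)$. That follows from joint $\B(\Theta)\otimes\A$-measurability of $(t,\omega)\mapsto X_t^{(\alpha)}(\omega)$ together with Fubini–Tonelli applied to positive and negative parts; the integral is finite because the paths lie in $L^2$. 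Alternatively, one can use the identity $(-1)^{|\alpha|}\int_\Theta X_t\,\varphi^{[\alpha]}(t)\,dt$ and the measurability of $X$ alone.

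\emph{Second moments.} Write
\[
\E\|X\|_{H^k}^2=\sum_{|\alpha|\le k}\int_\Theta \E\big[(X_t^{(\alpha)})^2\big]\,dt
\]
by Tonelli. The goal is to identify $\E[X_s^{(\alpha)}X_t^{(\beta)}]$ with $\partial^{(\alpha,\beta)}\gamma_X(s,t)$ almost everywhere, so that the diagonal integrals are controlled by $\gamma_X\in H^{k,k}$. This is the main obstacle. The hypothesis $\gamma_X\in H^{k,k}$ gives only $L^2(\Theta\times\Theta)$ control of $\partial^{(\alpha,\alpha)}\gamma_X$, not of its restriction to the diagonal, and finiteness of $\E[(X^{(\alpha)}_t)^2]$ is not assumed a priori.

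I would attack it as follows. Use the test-function identity: for $\varphi,\psi\in C_c^\infty(\Theta)$,
\[
\E\Big[\int X^{(\alpha)}\varphi\int X^{(\beta)}\psi\Big]=(-1)^{|\alpha|+|\beta|}\iint\gamma_X(s,t)\,\varphi^{[\alpha]}(s)\,\psi^{[\beta]}(t)\,ds\,dt=\iint\partial^{(\alpha,\beta)}\gamma_X\,\varphi\otimes\psi .
\]
The first equality uses Fubini, justified by $\E[X_t^2]<\infty$; integrability of $\sigma_X$ on compact supports is what must be checked or assumed. Hence the random functionals $Z_\alpha(\varphi)=\langle X^{(\alpha)},\varphi\rangle_{L^2}$ have second moments $\langle T_{\partial^{(\alpha,\alpha)}\gamma_X,0}\varphi,\varphi\rangle_{L^2}$. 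Taking an orthonormal basis $(\varphi_n)$ of $L^2(\Theta)$ drawn from $C_c^\infty$, monotone convergence gives
\[
\E\|X^{(\alpha)}\|_{L^2}^2=\sum_n\langle T_{\partial^{(\alpha,\alpha)}\gamma_X,0}\varphi_n,\varphi_n\rangle_{L^2},
\]
which is the trace of this nonnegative operator. Its finiteness is exactly nuclearity, as in Proposition~\ref{Prop:ScheuererHenderson}(b). In the setting at hand, I would obtain this by showing that $T_{\gamma_X,k}$ is nonnegative and nuclear on $H^k$. Nonnegativity follows from the same identity, applied to finite combinations approximating arbitrary $f\in H^k$. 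Nuclearity I would try to get from the decomposition \eqref{Eq:HilbertSchmidtNorm2} together with the fact that the trace over $\{\varphi_n\}$ is finite. If this step resists, I would fall back on the sufficient condition of Proposition~\ref{Prop:ScheuererHenderson}, or note that the traces are finite whenever the diagonal integrals $\int_\Theta\partial^{(\alpha,\alpha)}\gamma_X(t,t)\,dt$ are.

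\emph{Identification of the covariance operator.} Once $\E\|X\|_{H^k}^2<\infty$, Lemma~\ref{Lem:CovarianceOperator} gives $\Cov_k(X)=T_{c_X,k}$ with $c_X\in H^{k,k}$. Lemma~\ref{Lem:MeasurableRepresentative}(c), with $Y=X$, identifies $c_X=\gamma_X$ almost everywhere. Hence $\Cov_k(X)=T_{\gamma_X,k}\in N(H^k)$.
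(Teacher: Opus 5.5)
Your measurability argument is sound and matches the paper's in substance. The paper uses Tonelli on the jointly measurable derivative processes and then passes through $H_\kappa(\Theta)$ and $\J^*$; you use Lemma~\ref{Lem:SobolevRandomElement} directly, which works equally well. Identifying $c_X=\gamma_X$ via Lemma~\ref{Lem:MeasurableRepresentative}(c) is also a legitimate alternative to the paper's test-function identification, but only \emph{once} second moments are known. The genuine gap is the one you flag yourself: you never prove $\E\|X\|_{H^k}^2<\infty$, and the route you sketch is circular. By Lemma~\ref{Lem:CharacterizationHS-Opertors} and \eqref{Eq:HilbertSchmidtNorm2}, the hypothesis $\gamma_X\in H^{k,k}(\Theta\times\Theta)$ says exactly that $T_{\gamma_X,k}$ and the operators $T_{\partial^{(\alpha,\alpha)}\gamma_X,0}$ are Hilbert--Schmidt. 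A non-negative Hilbert--Schmidt operator need not be nuclear. Moreover, ``the trace over $\{\varphi_n\}$ is finite'' is, by your own monotone-convergence identity, the very statement $\E\|X^{(\alpha)}\|_{L^2}^2<\infty$ you are trying to prove.

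The gap cannot be closed, because the ``moreover'' part is false under the stated hypotheses. Take $\Theta=(0,1)$, $I_j=(2^{-j},2^{1-j})$, and $e_j\in C_c^\infty(I_j)$ with $\|e_j\|_{H^k}=1$. Let $\xi_j$ be independent with $\P(\xi_j=\pm\sqrt{j})=\frac{1}{2j^2}$ and $\P(\xi_j=0)=1-j^{-2}$, and set $X_t=\sum_j\xi_j e_j(t)$. At each $t$ at most one term is non-zero, so $X$ is measurable and centred with $\E[X_t^2]<\infty$. By Borel--Cantelli only finitely many $\xi_j$ are non-zero a.s., so the paths lie in $C_c^\infty(\Theta)\subset H^k(\Theta)$. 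The covariance $\gamma_X(s,t)=\sum_j j^{-1}e_j(s)e_j(t)$ satisfies $\|\gamma_X\|_{H^{k,k}}^2=\sum_j j^{-2}<\infty$. Yet the $e_j$ have disjoint supports and are orthonormal in $H^k(\Theta)$, so $\E\|X\|_{H^k}^2=\sum_j j^{-1}=\infty$, and $T_{\gamma_X,k}=\sum_j j^{-1}e_j\otimes e_j$ is not nuclear.

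The paper's proof has the same hole. It identifies the kernels of $\Cov_0(X^{(\beta)},X^{(\alpha)})$ on test functions and then simply asserts these operators lie in $N(L^2)$, which presupposes $\E\|X^{(\alpha)}\|_{L^2}^2<\infty$. An extra hypothesis is required. One option is Gaussianity: Proposition~\ref{Prop:ScheuererHenderson}(b) then supplies nuclearity and $\sigma_X\in L^1_{loc}(\Theta)$. Another is continuity of $\partial^{[\alpha,\alpha]}\gamma_X$ on $\ol\Theta\times\ol\Theta$, so that the trace equals $\int_\Theta\partial^{[\alpha,\alpha]}\gamma_X(t,t)\,dt<\infty$. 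These cover the paper's later uses, and under either one your monotone-convergence trace argument goes through.
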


\begin{proof}
By assumption, there exist measurable processes $(X_t^{(\alpha)})_{t\in\Theta}$ with sample paths in $L^2(\Theta)$ satisfying 
\eqref{Eq:AlmostSureWeakDiffPaths}. It then follows from Tonelli's theorem that $\omega \mapsto \int_\Theta X_t^{(\alpha)}(\omega)f(t)\,dt$ is 
$\A$-measurable for any $f \in L^2(\Theta)$ and, hence, $\omega \mapsto (X_t^{(\alpha)}(\omega))_{t\in\Theta}$ defines a $L^2(\Theta)$-valued random 
element for each $|\alpha| \leq k$ (c.f., \cite{Hsing2015FoundationsOfFDA}, Theorem 7.4.1). Observe, that for 
$\alpha=(\alpha_1,\ldots,\alpha_d) \in \N_0^d$ with $|\alpha| < k$ and $e_j=(\delta_{ij}, i=1,\ldots,d) \in\{0,1\}^d$ for $j\in\{1,\ldots,d\}$, the 
relation \eqref{Eq:AlmostSureWeakDiffPaths} implies 
\begin{align*}
	\int_{\Theta} X_t^{(\alpha)}\,\varphi^{[e_j]}(t)\, dt 
	= - \int_{\Theta} X_t^{(\alpha+e_j)}\, \varphi(t) \,dt.
\end{align*}
Successive iteration of this identity and the $\A$-$\B(L^2)$-measurability of each $(X_t^{(\alpha)})_{t\in\Theta}$ then allow to conclude that
$\omega \mapsto ( (X_t^{(\alpha)}(\omega))_{t\in\Theta} )_{|\alpha|\leq k}$ is a $H_{\kappa}(\Theta)$-valued random element. 
As $\J^*\colon H_{\kappa}(\Theta) \to H^k(\Theta)$ is continuous, this in turn yields $X=(X_t)_{t\in\Theta}$ is a $H^k(\Theta)$-valued random element.
Now, suppose $\E|X_t|^2 < \infty$ and $\gamma_X \in H^{k,k}(\Theta\times\Theta)$. Then, for $|\alpha|,|\beta|\leq k$ and $\phi,\psi \in C_c^{\infty}(\Theta)$, 
\begin{align*}
	\E[ \< X^{(\alpha)}_\bullet, \phi \>_{L^2}\,\< X^{(\beta)}_\bullet, \psi \>_{L^2} ]
	&= (-1)^{|\alpha|+|\beta|} \int_{\Theta}\int_{\Theta} \E[X_sX_t] \varphi^{[\alpha]}(s)\psi^{[\beta]}(t) \,ds\,dt\\
	&= \int_{\Theta}\int_{\Theta} \partial^{(\alpha,\beta)}\gamma_X(s,t) \varphi(s)\psi(t) \,ds\,d,
\end{align*}
by an application of Fubini.
Because $C_c^{\infty}(\Theta)$ is dense in $L^2(\Theta)$, this shows that the cross-covariance operators 
$\Cov_0(X^{(\beta)}_\bullet, X^{(\alpha)}_\bullet) \in N(L^2)$ are associated to $\partial^{(\alpha,\beta)}\gamma_X \in L^2(\Theta\times\Theta)$.
We can thus conclude from Remark \ref{Rem:ExAndCovInH_kappa}, that
\begin{align*}
	[\Cov_k(X)f](t)
	&= [\J^*\,\Cov(\J X)\J f](t)
	= \sum_{|\alpha|\leq k} \int_\Theta \partial^{(\alpha,0)}\gamma_X(s,t)\,f^{(\alpha)}(s) \,ds
\end{align*}
for $f\in H^k(\Theta)$, which establishes the second assertion.
\end{proof}
 
Gaussian random fields with weakly differentiable sample paths admit a natural identification as Gaussian elements in a 
Sobolev space, cf. \cite{Henderson2024SobolevRegularityGaussianFields}.

\begin{Cor}[Gaussian Fields] \label{Cor:GaussianProcess}
Let $(X_t)_{t\in\Theta} \sim \mathcal{GP}(0,\gamma_X)$ be measurable with $\gamma_X \in H^{k,k}(\Theta\times\Theta)$. 
Suppose the sample paths of $(X_t)_{t\in\Theta}$ are  in $H^k(\Theta)$ almost surely.
Then $X:=(X_t)_{t\in\Theta} \sim \NN_{H^k}(0,\Cov_k(X))$ with $\Cov_k(X) = T_{\gamma_X,k}\in N(H^k)$.
\end{Cor}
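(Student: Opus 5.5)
The plan is to combine Lemma \ref{Lem:RandomFieldsAsSobolevElements} with the characterisation of Gaussian elements in Lemma \ref{Lem:GaussianElements}. By Lemma \ref{Lem:RandomFieldsAsSobolevElements}, the map $\omega\mapsto X(\omega)=(X_t(\omega))_{t\in\Theta}$ is an $H^k(\Theta)$-valued random element. Since a Gaussian field has $\E[X_t^2]=\gamma_X(t,t)<\infty$ for all $t$ and $\gamma_X\in H^{k,k}(\Theta\times\Theta)$ by assumption, the same lemma gives $\E\|X\|_{H^k}^2<\infty$ and $\Cov_k(X)=T_{\gamma_X,k}\in N(H^k)$. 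Centredness of $X$ as a Sobolev element follows from Lemma \ref{Lem:MeasurableRepresentative}(b): the mean function $t\mapsto\E[X_t]=0$ represents $\E[X]$. It therefore only remains to prove Gaussianity, i.e. statement (ii) of Lemma \ref{Lem:GaussianElements}, namely that $\<X,f\>_{H^k}$ is centred normal for every $f\in H^k(\Theta)$. Its variance $\<T_{\gamma_X,k}f,f\>_{H^k}$ is then automatic from the covariance identity.

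For Gaussianity I will write $\<X,f\>_{H^k}=\sum_{|\alpha|\le k}\<X^{(\alpha)},f^{(\alpha)}\>_{L^2}$. The difficulty is that the weak derivatives $X^{(\alpha)}$ are not directly linear combinations of point evaluations. I will first reduce to test functions. For $\varphi\in C_c^\infty(\Theta)$, relation \eqref{Eq:AlmostSureWeakDiffPaths} gives
\begin{align*}
\<X^{(\alpha)},\varphi\>_{L^2}=(-1)^{|\alpha|}\int_\Theta X_t\,\varphi^{[\alpha]}(t)\,dt \quad\text{a.s.}
\end{align*}
So any finite linear combination $\sum_{\alpha}\<X^{(\alpha)},\varphi_\alpha\>_{L^2}$ equals a single integral $\int_\Theta X_t\,\psi(t)\,dt$ with $\psi\in C_c^\infty(\Theta)$.

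Such an integral is an $L^2(\P)$-limit of Riemann-type sums $\sum_i X_{t_i}\psi(t_i)\,\Delta_i$, which are centred Gaussian. Rather than deal with path regularity for Riemann sums, I would argue via Gaussian Hilbert spaces. The closed linear span $\mathcal G\subset L^2(\P)$ of $\{X_t\}$ consists of centred Gaussian variables and is closed in $L^2(\P)$. The random variable $\int X_t\psi(t)\,dt$ is the Bochner/Pettis integral of the map $t\mapsto X_t\psi(t)$ into $L^2(\P)$. This map is strongly measurable and Bochner integrable because $\int_\Theta \|X_t\|_{L^2(\P)}|\psi(t)|\,dt=\int\sigma_X|\psi|<\infty$; here $\sigma_X^2=\gamma_X(t,t)$, and local integrability of $\sigma_X$ follows from $\gamma_X\in L^2$ and the existence of the integral, which I will check. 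Hence the integral lies in $\mathcal G$, and by Fubini it agrees with the pathwise integral. I expect this identification to be the main technical obstacle, namely justifying measurability and the interchange of pathwise and $L^2(\P)$ integration.

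Finally I will pass from test functions to general $f\in H^k(\Theta)$. Since $\E\|X\|_{H^k}^2<\infty$, the map $g\mapsto\<X,g\>_{H^k}$ is continuous from $H^k(\Theta)$ into $L^2(\P)$. The vectors $\J^*(\varphi_\alpha)_{|\alpha|\le k}$ with $\varphi_\alpha\in C_c^\infty(\Theta)$ are dense in $H^k(\Theta)$, because $C_c^\infty(\Theta)^\kappa$ is dense in $L^2_\kappa(\Theta)$ and $\J^*$ is the orthogonal projection onto $H_\kappa(\Theta)$. For such vectors, $\<X,\J^*(\varphi_\alpha)\>_{H^k}=\sum_\alpha\<X^{(\alpha)},\varphi_\alpha\>_{L^2}\in\mathcal G$. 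Because $\mathcal G$ is closed in $L^2(\P)$, every $\<X,f\>_{H^k}$ lies in $\mathcal G$ and is therefore centred normal. Lemma \ref{Lem:GaussianElements} then yields $X\sim\NN_{H^k}(0,T_{\gamma_X,k})$.
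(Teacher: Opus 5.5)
Your proposal is correct, and its core is the paper's: both put the test-function functionals $\<X^{(\alpha)},\varphi\>_{L^2}=(-1)^{|\alpha|}\int_\Theta X_t\,\varphi^{[\alpha]}(t)\,dt$ into the closed Gaussian space $G=\ol\Span\{X_t\mid t\in\Theta\}\subset L^2(\P)$ using Fubini. The two surrounding steps differ.

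\emph{Showing $Z=\int_\Theta X_t\psi(t)\,dt\in G$.} The paper uses no vector-valued integral. It splits $Z=Z_1+Z_2$ with $Z_1\in G$ and $Z_2\perp G$, and Fubini gives $\E[ZZ_2]=\int_\Theta\E[X_tZ_2]\,\psi(t)\,dt=0$, so $Z_2=0$. This is exactly your Fubini identification, tested against the single element $Y=Z_2$. The obstacle you anticipate, strong measurability of $t\mapsto X_t\in L^2(\P)$ for the Bochner integral, therefore never arises. It could be handled: joint measurability makes every $X_t$ measurable with respect to one countably generated sub-$\sigma$-algebra, so the range is separable, and weak measurability follows from Fubini. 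It is simply unnecessary.

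\emph{Passing to arbitrary functionals.} The paper uses the separating family of Lemma \ref{Lem:TestfunctionsAreSeparating2} and the weak$^*$/Krein--Smulian argument of Lemma \ref{Lem:GaussianInBanchSpace}, which needs no moments. You instead use density of $\J^*(C_c^\infty(\Theta)^\kappa)$ in $H^k(\Theta)$ and $L^2(\P)$-continuity of $f\mapsto\<X,f\>_{H^k}$. This costs the bound $\E\|X\|_{H^k}^2<\infty$, which Lemma \ref{Lem:RandomFieldsAsSobolevElements} supplies anyway. In return it is more elementary, and because everything stays in the closed linear space $G$, it gives joint Gaussianity of all $\<X,f\>_{H^k}$ at once.

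One justification needs replacing. $\gamma_X\in L^2(\Theta\times\Theta)$ does not control $\sigma_X$, since the diagonal is a null set in $\Theta\times\Theta$. Use what you already have instead: by Tonelli, $\int_\Theta\sigma_X^2(t)\,dt=\E\|X\|_{L^2}^2\le\E\|X\|_{H^k}^2<\infty$. Since $\Theta$ is bounded, this gives $\sigma_X\in L^2(\Theta)\subset L^1(\Theta)$. The same integrability is what tacitly justifies the Fubini step in the paper's proof.
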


\begin{proof}	
In view of Lemma \ref{Lem:RandomFieldsAsSobolevElements} it remains to verify that $X=(X_t)_{t\in\Theta}$ is also Gaussian.
According to Lemmas \ref{Lem:TestfunctionsAreSeparating2} and \ref{Lem:GaussianInBanchSpace} it suffices
to establish that $\< X_\bullet^{(\alpha)},\varphi \>_{L^2}$ is normally distributed for all $\varphi \in C_c^{\infty}(\Theta)$ and $|\alpha| \leq k$. 
To that end, first note that by \eqref{Eq:AlmostSureWeakDiffPaths} the map 
$\omega \mapsto Z(\omega) :=\< X_\bullet^{(\alpha)}(\omega), \varphi \>_{L^2}$ is a well-defined square integrable random variable. 
Moreover, let $G:= \ol\Span\{ X_t\mid t\in\Theta \}$. Then $G$ is a closed linear subspace in $L^2(\P)$ and it suffices to establish 
$Z \in G$, since $G$ consists of Gaussian random variables exclusively. However, $Z = Z_1 + Z_2$ for $Z_1 \in G$ and $Z_2 \perp G$ by 
the projection theorem and, therefore, 
\begin{align*}
	\E[ Z Z_2 ]
	= \E \Big[ \int_{\Theta}  X_t^{(\alpha)}\varphi(t)\,dt\, Z_2 \Big]
	= (-1)^{|\alpha|} \int_\Theta \E [ X_t Z_2] \,\varphi^{[\alpha]}(t) \,dt = 0,
\end{align*}
due to Fubini's theorem. This shows $Z \perp Z_2$ in $L^2(\P)$, that is $Z \in G$. 
\end{proof}

\begin{Ex}[Positive Definite Kernels]\label{Ex:DPKernels}
Suppose $h\in C^{k,k}
(\ol\Theta\times\ol\Theta)$ is positive definite. Then there exists a $X\sim \NN_{H^k}(0,\Cov_k(X))$ with $\Cov_k(X)=T_{h,k} \in N(H^k)$,
where $T_{h,k}$ is defined via \eqref{Eq:AssociatedOperator}.
\begin{proof}
Since $h$ is assumed positive-definite, it is well-known that as a consequence of Kolmogorov's consistency theorem, there exists a centred 
Gaussian field $(X_t)_{t\in \Theta}$ with $\E[X_sX_t] = h(s,t)$, i.e. $h$ is the covariance kernel of $(X_t)_{t\in \Theta}$. 
Moreover, $h\in V^k(\ol\Theta\times \ol\Theta)$ implies that there exists a measurable modification with sample paths in $H^k(\Theta)$ a.s. by Proposition 
\ref{Prop:ScheuererHenderson}. 
Therefore, Corollary \ref{Cor:GaussianProcess} ensures existence of
a centred $H^k(\Theta)$-valued Gaussian element $X$ with $\Cov_k(X)=T_h$. 
\end{proof}
\end{Ex}

\begin{Ex}[Integrated Brownian motion]\label{Ex:IntegratedBM}
A prototypical $H^1((0,1))$-valued Gaussian element is obtained by simply integrating a Brownian motion (BM) to enforce the desired differentiability. 
To elaborate, let $(B_t, 0\leq t \leq 1)$ be a BM and for $t\in[0,1]$ define
\begin{align}\label{Eq:IntegratedBM}
	X_t := \int_0^t B_u \,du.
\end{align}
Then $(X_t, 0\leq t\leq 1)$ is called an \emph{integrated Brownian motion} (IBM). Note, that both processes are Gaussian and also measurable due to their continuity. 
Moreover, the sample paths of $(X_t)_t$ are (weakly) differentiable by construction such that $(X_t^{(1)})_t=(X_t^{[1]})_t = (B_t)_t$, which may also be verified directly, 
cf. \eqref{Eq:AlmostSureWeakDiffPaths}: For $\varphi \in C_c^{\infty}((0,1))$,
\begin{align*}
	-\int_0^1 X_t \,\varphi^{[1]}(t) \,dt 
	= -\int_0^1 \int_0^t B_u \,du \varphi^{[1]}(t) \,dt
	= \int_0^1 B_u\varphi(u) \,du.
\end{align*}
This shows $(X_t)_t$ has sample paths in $H^k(\Theta)$ with mean function $\mu_X$ and covariance kernel $\gamma_X$ given by:
\begin{align*}
	\mu_X(t) 
	&= \E[ X_t] = \int_0^t \E [B_u] \,du = 0,\\
	\gamma_X(s,t) 
	&= \E[X_s X_t] 
	= \int_0^t \int_0^s \min(u,v) \,dv\,du
	= \frac{1}{2}\min(s,t)^2\max(s,t) - \frac{1}{6}\min(s,t)^3.
\end{align*}
Differentiating $\gamma_X$ with respect to $s$ yields
\begin{align*}
	\partial^{[1,0]} \gamma_X(s,t) = 
	\begin{cases}
		st - \frac{1}{2}s^2 &, s\leq t; \\
		\frac{1}{2}t^2 &, s> t.
	\end{cases}
\end{align*}
On the other hand, we also have
\begin{align*}
	\E[X_s^{[1]} X_t] = \E[ B_s \hspace{-0.1cm}\int_0^t B_u \,du ]
	= \int_0^t \hspace{-0.1cm}\min(s,u) \,du
	= \begin{cases}
		st - \frac{1}{2}s^2 \hspace{-0.3cm}&, s\leq t; \\
		\frac{1}{2}t^2 \hspace{-0.3cm} &, s> t,
	\end{cases}
\end{align*}
which confirms $\partial^{[1,0]} \gamma_X(s,t) = \E[X_s^{[1]} X_t]$, cf. Lemma \ref{Lem:MeasurableRepresentative} (c). Similarly, 
\begin{align*}
	\partial^{[0,1]} \gamma_X(s,t) =
	\left.
	\begin{cases}
		st - \frac{1}{2}t^2 \hspace{-0.3cm}&, t\leq s \\
		\frac{1}{2}s^2 \hspace{-0.3cm} &, t> s
	\end{cases}
	\right\}
	= \E[X_s X_t^{[1]}] 
\end{align*}
and 
\begin{align*}
	\partial^{[1,1]} \gamma_X(s,t) = 
	\left.
	\begin{cases}
		s &, s\leq t \\
		t &, s> t
	\end{cases}
	\right\} 
	= \min(s,t)
	= \E[X_s^{[1]} X_t^{[1]}].
\end{align*}
Therefore, $\gamma_X \in C^{1,1}1([0,1]^2)$ and it follows from Corollary \ref{Cor:GaussianProcess}, that $X=(X_t)_t \sim \NN_{H^1}(0, \Cov_1(X))$ with 
$\Cov_1(X) = T_{\gamma_X,1}$. 
Specifically, the equivalence stated in Proposition \ref{Prop:ScheuererHenderson} (b) is demonstrated as follows: 
On the one hand, it is apparent that $\gamma_X,\, \partial^{[1,1]}\gamma_X \in L^2((0,1)^2)$ and $\partial^{[1,1]}\gamma_X$ is again positive definite by \eqref{Eq:Non-negativityThL2}. 
Since both kernels are continuous, the associated $L^2$-integral operators 
$T_{\gamma_x,0}$ and  $T_{\partial^{[1,1]}\gamma_X,0}$ are nuclear (cf. Remark \ref{Rem:NuclearNorm}), which in turn ensures $(X_t)_t$ has (weakly) 
differentiable sample paths. 
On the other hand, (weak) differentiability of $(X_t)_t$ yields that 
$\partial^{[1,1]}\gamma_X$ is the covariance kernel of $X^{[1]}$, which immediately implies $T_{\partial^{[1,1]}\gamma_X,0}$ is nuclear.\\
Furthermore, the trace-identity in Corollary \ref{Cor:NuclearNorm}, respectively, \eqref{Eq:CovarianceNuclearNorm} can be applied to obtain
\begin{align*}
	\|\Cov_1(X)\|_{N(H^1)}
	&= \|\Cov_0(X)\|_{N(L^2)} + \|\Cov_0(X^{[1]})\|_{N(L^2)}\\
	&= \int_0^1 \gamma_X(t,t) \,dt + \int_0^1 \partial^{[1,1]} \gamma_X(t,t)\,dt\\
	&= \frac{1}{3} \int_0^1 t^3 \,dt + \int_0^1 t\,dt 
	= \frac{1}{12} + \frac{1}{2} = \frac{7}{12}.
\end{align*} 
This illustrates how
the $H^1$-covariance operator incorporates the total variance of both, the original process (IBM) 
and of its (weak) derivative (BM) in an additive way.
\end{Ex}

\subsection{Mean Square Optimal Approximations}
In this final section, the results from Section \ref{SubSec:ExtensionsOfMercersTheorem} regarding kernel expansions in Sobolev spaces are applied 
to the covariance kernel of a weakly differentiable random field, i.e. a $H^k(\Theta)$-valued random element. This allows for the derivation of 
approximations that are not only mean square optimal in $H^k(\Theta)$ but also uniformly convergent.\\

Let $X$ be a $H^k(\Theta)$-valued random element. Then $X$ can be expanded in any orthonormal basis $\{ u_j\mid j\in\N \}$ of $H^k(\Theta)$,
that is $X = \sum_j \< X,u_j \>_{H^k}u_j$ in $H^k(\Theta)$ almost surely. Summing only a specific 
number $m\in\N$ of terms then creates a $m$-dimensional approximation for $X$. It is a long-studied problem to find a basis such that the corresponding
$m$-dimensional approximation is optimal with respect to a given error-measure. Suppose $X$ is centred and $\E\|X\|_{H^k}^2<\infty$, 
then a common error-measure in this regard is the $H^k$-mean square error given by 
\begin{align}\label{Eq:MeanSquareError}
	\E \big\| X - \sum_{|j|\leq m} \< X,u_j \>_{H^k} u_j  \big\|_{H^k}^2
	= \E \|X\|_{H^k}^2 - \sum_{|j|\leq m} \< \Cov_k(X)u_j, u_j \>_{H^k}.
\end{align} 
It is well-known (e.g., \cite{Hsing2015FoundationsOfFDA}, Theorem 7.28) that \eqref{Eq:MeanSquareError} is minimized by choosing the normalized 
eigenvectors of the covariance operator $\Cov_k(X) \in N(H^k)$. That is, the optimal $u_j = e_{j,k}$ are determined as solutions of
\begin{align}\label{Eq:EigenEquation_Sobolev}
	[\Cov_k(X)e_{j,k}](t) 
	= \int_\Theta \sum_{|\alpha|\leq k} \partial^{(\alpha,0)}c_X(s,t)\,e_{j,k}^{(\alpha)}(s) \,ds 
	= \lambda_j e_{j,k}(t) \quad \text{a.e.}
\end{align} 
subject to $\|e_{j,k}\|_{H^k}=1$ with corresponding eigenvalues $\lambda_{j,k}$ counted by multiplicity. 
Recall that $\sum_j \lambda_{j,k}=\|\Cov_k(X)\|_{N(H^k)}=\E\|X\|_{H^k}^2$ and let 
\begin{align}\label{Eq:SobolevApprox}
	X_{m,k} = \sum_{j\leq m} \<X,e_{j,k}\>_{H^k} \,e_{j,k}.
\end{align}
By construction, the scores $\<X,e_{j,k}\>_{H^k}$ in terms of the $H^k$-eigenbasis satisfy
\begin{align}\label{Eq:ScoresUncorrelated}
	\Cov( \<X,e_{i,k}\>_{H^k}, \<X,e_{j,k}\>_{H^k} ) = \<\Cov_k(X)e_{i,k}, e_{j,k}\>_{H^k} = \lambda_{j,k} \delta_{ij},
\end{align}
showing that the crucial biorthogonality property of the original KL-expansion is - of course - retained. 
Substituting \eqref{Eq:SobolevApprox} into \eqref{Eq:MeanSquareError} then yields
\begin{align}\label{Eq:SobolevApproxError}
	\E\| X - X_{m,k}\|_{H^k}^2 
	= \sum_{|\alpha|\leq k} \E \| X^{(\alpha)}-X_{m,k}^{(\alpha)} \|_{L^2}^2
	= \sum_{j>m} \lambda_{j,k}
\end{align}
and the right hand side can not be reduced any further by choosing another basis. 
This shows, the expansion \eqref{Eq:SobolevApprox}
approximates $X$ as well as all it's weak derivatives $X^{(\alpha)}$, $|\alpha| \leq k$, \emph{simultaneously} in a $L^2$-mean square optimal sense.

\paragraph{KL-Representations in Sobolev Spaces} 
Transitioning to (weakly) differentiable random fields, it is natural to ask whether $X_{m,k}$ and its (weak) derivatives also converge in $L^2(\P)$ uniformly in $t$, 
allowing for consistent control of the truncation error across the domain $\Theta$.  
The subsequent refinements of the original KL-expansion are immediate consequences of Theorem \ref{Thrm:Mercer} and 
Corollary \ref{Cor:MercerRateOfConvergence}.

\begin{Thrm}[Refined KL-Representation]\label{Thrm:KLExpansionSobolev}
Let $(X_t)_{t\in\Theta}$ be a measurable centred second order random field with covariance kernel
$\gamma_X \in C^{k,k}(\ol\Theta\times \ol\Theta)$ and let $\{ (\lambda_{j,k}, e_{j,k})\mid j\in\N \}$ 
denote the eigensystem of the associated kernel operator $T_{\gamma_X,k} \in N(H^k)$.  
Then $X=(X_t)_{t\in\Theta}$ is a $H^k(\Theta)$-valued random element and, for $|\alpha| \leq k$,
\begin{align}\label{Eq:UniformConvergence}
	\lim_{m\to\infty} \,\sup_{t\in \Theta} \,\E \Big| X_t^{(\alpha)} - \sum_{j\leq m} \< X, e_{j,k} \>_{H^k} \,e_{j,k}^{[\alpha]}(t) \Big|^2 = 0.
\end{align}
\end{Thrm}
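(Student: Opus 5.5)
First I show that $X$ is an $H^k(\Theta)$-valued random element with $\Cov_k(X)=T_{\gamma_X,k}$. Since $\gamma_X\in C^{k,k}(\ol\Theta\times\ol\Theta)$, every $\partial^{[\alpha,\alpha]}\gamma_X$ is continuous on the diagonal and bounded, so $\int_\Theta \partial^{[\alpha,\alpha]}\gamma_X(t,t)\,dt<\infty$ because $\Theta$ is bounded. Proposition \ref{Prop:ScheuererHenderson}(a) then puts the sample paths in $H^k(\Theta)$ almost surely, after passing to a measurable modification if needed. Because $\gamma_X$ is a covariance kernel it is positive definite, and $C^{k,k}(\ol\Theta\times\ol\Theta)\subset H^{k,k}(\Theta\times\Theta)$ on a bounded domain. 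Lemma \ref{Lem:RandomFieldsAsSobolevElements} therefore gives $\E\|X\|_{H^k}^2<\infty$ and $\Cov_k(X)=T_{\gamma_X,k}$, which is non-negative and nuclear by Lemma \ref{Lem:NuclearityOfT_h}. By Lemma \ref{Lem:MeasurableRepresentative}(c), for almost all $s,t$,
\begin{align*}
\Cov\big(X_s^{(\alpha)},X_t^{(\beta)}\big)=\partial^{[\alpha,\beta]}\gamma_X(s,t).
\end{align*}
I then fix the representative of $X^{(\alpha)}_t$ for which this identity holds for every $t$ on the diagonal. I expect this representative issue to be the main technical obstacle, discussed below.

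Next I expand the mean-square error. Write $Z_j=\<X,e_{j,k}\>_{H^k}$. By \eqref{Eq:ScoresUncorrelated}, $\E[Z_iZ_j]=\lambda_{j,k}\delta_{ij}$. For the cross term I use \eqref{Eq:DerivativeT_hfAsInnerProduct} together with Fubini:
\begin{align*}
\E\big[X_t^{(\alpha)}Z_j\big]=\sum_{|\beta|\leq k}\int_\Theta \partial^{[\alpha,\beta]}\gamma_X(t,s)\,e_{j,k}^{[\beta]}(s)\,ds=[T_{\gamma_X}e_{j,k}]^{[\alpha]}(t)=\lambda_{j,k}e_{j,k}^{[\alpha]}(t).
\end{align*}
This holds for every $t$ because both sides are continuous: Step 1 of Theorem \ref{Thrm:Mercer} gives $e_{j,k}\in C^k(\ol\Theta)$ and shows that $T_{\gamma_X}$ maps into $C^k(\ol\Theta)$. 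Expanding the square then gives
\begin{align*}
\E\Big|X_t^{(\alpha)}-\sum_{j\leq m}Z_j\,e_{j,k}^{[\alpha]}(t)\Big|^2=\partial^{[\alpha,\alpha]}\gamma_X(t,t)-\sum_{j\leq m}\lambda_{j,k}\,|e_{j,k}^{[\alpha]}(t)|^2 .
\end{align*}

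Finally, Theorem \ref{Thrm:Mercer} with $\beta=\alpha$, restricted to the diagonal $x=y=t$, says that the right-hand side tends to $0$ uniformly in $t\in\ol\Theta$. This yields \eqref{Eq:UniformConvergence}.

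The main obstacle is making the identity $\E[X_t^{(\alpha)}X_t^{(\alpha)}]=\partial^{[\alpha,\alpha]}\gamma_X(t,t)$ hold pointwise rather than only almost everywhere, which the supremum over $t$ requires. My plan is to show that $X^{(\alpha)}$ has a mean-square continuous version given by the $L^2(\P)$-derivative of $X$. Continuity of the mixed partials $\partial^{[\alpha,\alpha]}\gamma_X$ gives mean-square differentiability by the classical Lo\`eve criterion. Integrating this version against test functions shows that it coincides almost surely with the weak derivative for almost every $t$, so it is a legitimate representative of $X^{(\alpha)}$. For it, $\E[X_s^{(\alpha)}X_t^{(\beta)}]=\partial^{[\alpha,\beta]}\gamma_X(s,t)$ holds for every $s,t$. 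If that route stalls, a weaker fallback is to replace $\sup$ by $\operatorname{ess\,sup}$, which the almost-everywhere identities already deliver.
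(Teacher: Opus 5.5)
Your proposal is correct and follows the paper's proof: you use Proposition \ref{Prop:ScheuererHenderson}(a) and Lemma \ref{Lem:RandomFieldsAsSobolevElements} to get $\Cov_k(X)=T_{\gamma_X,k}$, reduce the pointwise mean-square error to $\partial^{[\alpha,\alpha]}\gamma_X(t,t)-\sum_{j\leq m}\lambda_{j,k}|e_{j,k}^{[\alpha]}(t)|^2$ via the eigen-relation and the uncorrelated scores, and finish with the diagonal case of Theorem \ref{Thrm:Mercer}. The paper uses $\E|X_t^{(\alpha)}|^2=\partial^{[\alpha,\alpha]}\gamma_X(t,t)$ for every $t$ without comment, so choosing the mean-square-continuous (mean-square derivative) version of $X^{(\alpha)}$ legitimately justifies a step the paper leaves implicit; for $\alpha=0$ no choice is needed, since $\E X_t^2=\gamma_X(t,t)$ holds by definition and the cross term can be checked for each fixed $t$ by Fubini.
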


\begin{proof}
First, note that Proposition \ref{Prop:ScheuererHenderson} (a) assesses that the sample paths of $(X_t)_{t\in\Theta}$ are in $H^k(\Theta)$. 
Therefore, by Lemma \ref{Lem:RandomFieldsAsSobolevElements}, $X=(X_t)_{t\in\Theta}$ is a $H^k(\Theta)$-valued random element with covariance operator
$\Cov_k(X) = T_{\gamma_X,k}$. 
It follows from Theorem \ref{Thrm:Mercer} that $e_j \in C^k(\ol\Theta)$, so the pointwise evaluation in \eqref{Eq:UniformConvergence} is well-defined 
and we have 
\begin{align}\label{Eq:PointwiseMSError}
	&\E \Big| X_t^{(\alpha)} - \sum_{j\leq m} \< X, e_j \>_{H^k}\, e_j^{[\alpha]}(t) \Big|^2\nonumber\\
	&= \partial^{[\alpha,\alpha]}\gamma_X(t,t) 
	- 2 \sum_{j\leq m} \int_\Theta \sum_{|\beta|\leq k} \E[ X_s^{(\beta)}X_t^{(\alpha)}]\, e_j^{[\beta]}(s) \,ds\, e_j^{[\alpha]}(t)
	+ \sum_{i,j\leq m} \< \E\< e_i,X \>_{H^k}X, e_j \>_{H^k} e_i^{[\alpha]}(t) \,e_j^{[\alpha]}(t)\nonumber\\
	&= \partial^{[\alpha,\alpha]}\gamma_X(t,t) 
	- 2 \sum_{j\leq m} \int_\Theta \sum_{|\beta|\leq k} \partial^{[\beta,\alpha]}\gamma_X(s,t)\, e_j^{[\beta]}(s) \,ds\, e_j^{[\alpha]}(t)
	+ \sum_{i,j\leq m} \<\Cov_k(X)e_j, e_j \>_{H^k} e_i^{[\alpha]}(t) \,e_j^{[\alpha]}(t)\nonumber\\
	&= \partial^{[\alpha,\alpha]}\gamma_X(t,t) 
	- 2 \sum_{j\leq m} [\Cov_k(X)e_j]^{[\alpha]}(t)\, e_j^{[\alpha]}(t)
	 + \sum_{j\leq m} \lambda_j\, e_j^{[\alpha]}(t) \, 
	e_j^{[\alpha]}(t)\nonumber\\
	&= \partial^{[\alpha,\alpha]}c_X(t,t) - \sum_{j\leq m} \lambda_j \, e_j^{[\alpha]}(t) \, e_j^{[\alpha]}(t).
\end{align}
By Theorem \ref{Thrm:Mercer} the right hand side vanishes uniformly over $\ol\Theta$ as $m$ increases to infinity.
\end{proof}

\begin{Cor}\label{Cor:KLExpansionSobolev}
Granted the conditions of Theorem \ref{Thrm:KLExpansionSobolev} are satisfied. Suppose $\Theta$ has the cone property and $k-d/2 > s$ for some $s\in\N_0$.
Then, for $|\alpha|\leq s$,
\begin{align}\label{Eq:UniformConvergenceRate}
	\sup_{t\in \ol\Theta} \,\E \Big| X_t^{(\alpha)} - \sum_{j\leq m} \< X, e_{j,k} \>_{H^k} \,e_{j,k}^{[\alpha]}(t) \Big|^2
	\leq C^2 \sum_{j>m} \lambda_{j,k},
\end{align}
where $C>0$ is the constant in \eqref{Eq:UniformBoundEigenfunctions}.
\end{Cor}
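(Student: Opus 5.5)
The estimate follows by combining the pointwise mean square error identity \eqref{Eq:PointwiseMSError}, from the proof of Theorem \ref{Thrm:KLExpansionSobolev}, with the rate in Corollary \ref{Cor:MercerRateOfConvergence}. The hypotheses of Theorem \ref{Thrm:KLExpansionSobolev} make $X$ an $H^k(\Theta)$-valued random element with $\Cov_k(X)=T_{\gamma_X,k}$, and they give $e_{j,k}\in C^k(\ol\Theta)$. So for every $t\in\Theta$ and $|\alpha|\leq k$ the computation \eqref{Eq:PointwiseMSError} applies. It gives
\begin{align*}
	\E \Big| X_t^{(\alpha)} - \sum_{j\leq m} \< X, e_{j,k} \>_{H^k} \,e_{j,k}^{[\alpha]}(t) \Big|^2
	= \partial^{[\alpha,\alpha]}\gamma_X(t,t) - \sum_{j\leq m} \lambda_{j,k}\, |e_{j,k}^{[\alpha]}(t)|^2 .
\end{align*}

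Next I would apply Theorem \ref{Thrm:Mercer} to $h=\gamma_X$. Since $\gamma_X$ is a covariance kernel in $C^{k,k}(\ol\Theta\times\ol\Theta)$, it is positive definite. On the diagonal $x=y=t$ with $\beta=\alpha$, the theorem lets the right-hand side be written as the tail $\sum_{j>m}\lambda_{j,k}|e_{j,k}^{[\alpha]}(t)|^2$. This tail is nonnegative, because $\lambda_{j,k}\geq 0$ by Lemma \ref{Lem:NuclearityOfT_h}. Now assume the cone property and $k-d/2>s$, and take $|\alpha|\leq s$. The universal bound \eqref{Eq:UniformBoundEigenfunctions} of Theorem \ref{Thrm:KernelExpansion} gives $|e_{j,k}^{[\alpha]}(t)|\leq C$ for all $j$ and $t$. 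Hence the tail is at most $C^2\sum_{j>m}\lambda_{j,k}$. Equivalently, this is the diagonal case $\alpha=\beta$ of Corollary \ref{Cor:MercerRateOfConvergence}.

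Last, I would take the supremum over $t\in\Theta$. The bound is uniform in $t$, so it carries over to $\ol\Theta$: both sides of the pointwise identity are continuous extensions, since $\partial^{[\alpha,\alpha]}\gamma_X$ and $e_{j,k}^{[\alpha]}$ extend continuously to $\ol\Theta$. One could also simply read the supremum over $\Theta$.

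The only delicate point is that \eqref{Eq:PointwiseMSError} identifies the mixed moments $\E[X_s^{(\beta)}X_t^{(\alpha)}]$ with $\partial^{[\beta,\alpha]}\gamma_X(s,t)$ pointwise. Lemma \ref{Lem:MeasurableRepresentative}(c) only gives this almost everywhere. Since this step is already part of the proof of Theorem \ref{Thrm:KLExpansionSobolev}, I would take it as established there and not redo it.
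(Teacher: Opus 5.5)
Your proposal is correct and matches the paper's proof, which simply combines the pointwise identity \eqref{Eq:PointwiseMSError} with Corollary \ref{Cor:MercerRateOfConvergence}; you unpack the diagonal case $\alpha=\beta$ of that corollary explicitly via Theorem \ref{Thrm:Mercer}, $\lambda_{j,k}\geq 0$, and the bound \eqref{Eq:UniformBoundEigenfunctions}. The pointwise identification of $\E[X_s^{(\beta)}X_t^{(\alpha)}]$ with $\partial^{[\beta,\alpha]}\gamma_X(s,t)$, which you flag, is also taken over from the proof of Theorem \ref{Thrm:KLExpansionSobolev} in the paper, so deferring to it is consistent with the paper's argument.
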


\begin{proof}
This follows immediately from \eqref{Eq:PointwiseMSError} and Corollary \ref{Cor:MercerRateOfConvergence}.
\end{proof}

\begin{Ex}\label{Ex:KLExpansionSobolev}
Let $k\geq 1$ and $\Theta=(0,b) \subset \R$. 
Suppose $(X_t)_{t\in\Theta}$ is a measurable centred second order stochastic process with covariance kernel
$\gamma_X \in C^{k,k}([0,b]^2)$. 
Then by Corollary \ref{Cor:KLExpansionSobolev}, substituting $C=\sqrt{\coth(b)}$ from Remark \ref{Rem:UniformBoundEigenfunction} 
into \eqref{Eq:UniformConvergenceRate} yields,
\begin{align*}
	\sup_{t\in [0,b]} \,\E \Big| X_t^{(\alpha)} - \sum_{j\leq m} \< X, e_{j,k} \>_{H^k} \,e_{j,k}^{[\alpha]}(t) \Big|^2
	\leq \coth(b) \sum_{j>m} \lambda_{j,k}.
\end{align*}   
\end{Ex} 

In the Gaussian case, the expansion \eqref{Eq:SobolevApprox} also converges almost surely uniformly - that is, pathwise. 

\begin{Cor}\label{Cor:KLGaussian}
Let $(X_t)_{t\in\Theta}\sim \mathcal{GP}(0, \gamma_X)$ be measurable with $\gamma_X \in C^{k,k}(\ol\Theta \times \ol\Theta)$ and
let $\{ (\lambda_{j,k}, e_{j,k})\mid j\in\N \}$ denote the eigensystem of the associated kernel operator $T_{\gamma_X,k} \in N(H^k)$. 
Then $X=(X_t)_{t\in\Theta} \sim \NN_{H^k}(0, T_{\gamma_X,k})$ and if $X^{(\alpha)}=(X_t^{(\alpha)})_{t\in\Theta}$ has uniformly continuous sample paths  
\begin{align}\label{Eq:KLGaussian}
	\P \Big( \lim_{m\to \infty} \,\sup_{t\in \ol\Theta}\Big|X_t^{(\alpha)} - \sum_{j\leq m} Z_{j,k}\, e_{j,k}^{[\alpha]}(t) \Big| =0\Big) =1,
\end{align}
where $Z_{j,k} = \<X, e_{j,k}\>_{H^k} \sim \NN(0,\lambda_{j,k})$ are independent.
\end{Cor}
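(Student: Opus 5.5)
The argument has three parts: identify $X$ as a Gaussian element of $H^k(\Theta)$, show the scores $Z_{j,k}$ are independent $\NN(0,\lambda_{j,k})$, and upgrade the mean-square uniform convergence of Theorem \ref{Thrm:KLExpansionSobolev} to almost sure uniform convergence with the It\^o--Nisio theorem.

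\emph{Gaussianity.} Since $\gamma_X\in C^{k,k}(\ol\Theta\times\ol\Theta)$, Proposition \ref{Prop:ScheuererHenderson}(a) applies, because $\partial^{[\alpha,\alpha]}\gamma_X(t,t)$ is bounded and $\Theta$ is bounded. Hence the sample paths lie in $H^k(\Theta)$ almost surely. Corollary \ref{Cor:GaussianProcess} then gives $X\sim\NN_{H^k}(0,T_{\gamma_X,k})$.

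\emph{The scores.} The vector $(Z_{1,k},\ldots,Z_{n,k})$ is jointly Gaussian: by Lemma \ref{Lem:GaussianElements}(ii), every linear combination $\sum c_jZ_{j,k}=\<X,\sum c_je_{j,k}\>_{H^k}$ is normal. By \eqref{Eq:ScoresUncorrelated} the scores are uncorrelated with variances $\lambda_{j,k}$. Uncorrelated jointly Gaussian variables are independent.

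\emph{Pathwise convergence.} Fix $|\alpha|\le k$. Consider the separable Banach space $E=C(\ol\Theta)$; separability holds because $\ol\Theta$ is compact. By assumption $X^{(\alpha)}$ has a uniformly continuous version, so $X^{(\alpha)}$ extends to an $E$-valued random element; measurability follows because point evaluations at a countable dense set separate points. By Theorem \ref{Thrm:Mercer}, $e_{j,k}\in C^k(\ol\Theta)$, so $S_m:=\sum_{j\le m}Z_{j,k}e_{j,k}^{[\alpha]}$ is a partial sum of a series of independent, symmetric $E$-valued random elements. The It\^o--Nisio theorem says such partial sums converge almost surely in $E$ as soon as there is an $E$-valued random element $Y$ with $\varphi(S_m)\to\varphi(Y)$ in probability for every $\varphi$ in a separating family of $E'$. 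I take the point evaluations $\delta_t$, $t\in\ol\Theta$, together with $Y=X^{(\alpha)}$. By Theorem \ref{Thrm:KLExpansionSobolev}, $S_m(t)\to X^{(\alpha)}_t$ in $L^2(\P)$ uniformly in $t\in\Theta$, hence in probability for each $t\in\Theta$. For boundary points $t\in\partial\Theta$, I approximate $t$ by interior points $t_n\to t$. Both $S_m$ and $X^{(\alpha)}$ are continuous, and the mean-square convergence is uniform, so $\E|S_m(t)-X^{(\alpha)}_t|^2$ is the limit of $\E|S_m(t_n)-X^{(\alpha)}_{t_n}|^2$ by Fatou's lemma, which gives the same bound. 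The It\^o--Nisio theorem then yields \eqref{Eq:KLGaussian}.

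\emph{Main obstacle.} The delicate step is a technicality: the almost-sure limit produced by the It\^o--Nisio theorem must coincide with the given version of $X^{(\alpha)}$, and not merely up to a modification. The version $X^{(\alpha)}_t$ is a priori defined only almost everywhere via \eqref{Eq:AlmostSureWeakDiffPaths}. I would argue that the uniformly continuous version agrees with the representative of Lemma \ref{Lem:MeasurableRepresentative} for almost every $t$, almost surely. That version also has mean-square error against $S_m$ given by \eqref{Eq:PointwiseMSError}, so the limit agrees with it at every $t$ in a countable dense set, almost surely. By continuity of both, they then agree on all of $\ol\Theta$.
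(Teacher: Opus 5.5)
Your proposal is correct and follows the paper's proof essentially step for step. Gaussianity comes from Proposition \ref{Prop:ScheuererHenderson}(a) and Corollary \ref{Cor:GaussianProcess}; the scores are jointly Gaussian and uncorrelated by \eqref{Eq:ScoresUncorrelated}, hence independent; and the It\^o--Nisio theorem is applied in $C(\ol\Theta)$ with point evaluations as the separating family, using the convergence in probability supplied by Theorem \ref{Thrm:KLExpansionSobolev}. Your boundary-point step is unnecessary, since evaluations at interior points already separate $C(\ol\Theta)$. Your remark on matching the continuous version of $X^{(\alpha)}$ with the representative from Lemma \ref{Lem:MeasurableRepresentative} addresses a subtlety the paper passes over silently.
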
 

\begin{proof}
By Proposition \ref{Prop:ScheuererHenderson} the sample paths of $X=(X_t)_{t\in\Theta}$ are in $H^k(\Theta)$ and, hence, 
$X\sim \NN_{H^k}(0, T_{\gamma_X,k})$ by Corollary \ref{Cor:GaussianProcess}. 
This implies, the vector $(\<X, e_j\>_{H^k}\mid j=1,\ldots,m)$ has a joint normal distribution for any $m\in\N$. 
Thus, the scores $\< X,e_j \>_{H^k}$ are not only uncorrelated but also independent and satisfy $\<X,e_j\>_{H^k} = \,Z_j \sim \NN(0,\lambda_j)$ 
by \eqref{Eq:ScoresUncorrelated}. Specifically, since $e_j \in C^k(\ol\Theta)$ by Theorem \ref{Thrm:Mercer}, $Z_j e_{j}^{[\alpha]}$
are Gaussian $C(\ol\Theta)$-valued random elements and for $t\in \Theta$ the sum $\sum_j Z_j e_j^{[\alpha]}(t)$ converges in probability to 
$X_t^{(\alpha)}$ by Theorem \ref{Thrm:KLExpansionSobolev}. Now consider $|\alpha| \leq k$ for which $(X_t^{(\alpha)})_{t\in\Theta}$ has by assumption 
uniformly continuous sample paths. Then $(X_t^{(\alpha)})_{t\in\Theta}$ is also a $C(\ol\Theta)$-valued random element and because the 
set of point-evaluation functionals is separating for the Banach space $C(\ol\Theta)$, \eqref{Eq:KLGaussian} follows from an application of the Ito-Nisio 
theorem in Banach spaces (e.g., \cite{Vakhania1987ProbabilityDistributionsOnBanachSpaces}, Chapter V, Theorem 2.4).
\end{proof}

\begin{Ex}\label{Ex:EigenExpansionIntBM}
To give an illustrative example, that also allows for an immediate comparison to the classical Karhunen-Loève theory ($k=0$), we consider again the 
integrated Brownian motion (IBM) with covariance kernel 
\begin{align}\label{Eq:IntegratedBM_kernel}
	\gamma_X(s,t) =  \frac{1}{2}\min(s,t)^2\max(s,t) - \frac{1}{6}\min(s,t)^3
\end{align}
already discussed in Example \ref{Ex:IntegratedBM}. The respective eigensystems $\{ (\lambda_{j,k}, e_{j,k})\mid j\in\N \}$ in $L^2\,(k=0)$ and 
$H^1\,(k=1)$ can be derived as follows:\\

(i) The $L^2$-eigensystem $\{ (\lambda_{j,0}, e_{j,0})\mid j\in \N \}$ is obtained by solving \eqref{Eq:EigenEquation_Sobolev} for $k=0$,
\begin{align}\label{Eq:KLforIntegratedBM_L2_EigenvectorEq}
	\lambda \, e(t)
	&=\int_0^1 \gamma_X(s,t)\, e(s) \,ds \nonumber\\
	&= \int_0^t \left(\frac{1}{2}s^2t - \frac{1}{6}s^3 \right)e(s) \,ds + \int_t^1 \left(\frac{1}{2}t^2s - \frac{1}{6}t^3 \right)e(s) \,ds.
\end{align}
Successive differentiation transforms this equation into an equivalent ordinary differential equation
\begin{align}\label{Eq:KLforIntegratedBM_L2_ODE}
	\lambda \, e^{[4]}(t) = e(t), \quad 0\leq t \leq 1
\end{align}
with boundary conditions 
\begin{align}\label{Eq:KLforIntegratedBM_L2_BoundaryConditions}
	e(0) = e^{[1]}(0) = 0\quad\text{and}\quad  e^{[2]}(1) = e^{[3]}(1) = 0.
\end{align}
The general solution of \eqref{Eq:KLforIntegratedBM_L2_ODE} is then given by 
\begin{align*}
	e(t) = c_1 \sinh\Big(\frac{t}{\sqrt[4]{\lambda}}\Big) +c_2 \cosh\Big(\frac{t}{\sqrt[4]{\lambda}}\Big)
	+c_3 \sin\Big(\frac{t}{\sqrt[4]{\lambda}}\Big)+c_4 \cos\Big(\frac{t}{\sqrt[4]{\lambda}}\Big)
\end{align*}
and the boundary conditions \eqref{Eq:KLforIntegratedBM_L2_BoundaryConditions} imply $c_3=-c_1$, $c_4=-c_2$ as well as 
\begin{align}\label{Eq:KLforIntegratedBM_L2_CoeffRatio}
	\frac{c_1}{c_2} 
	= \frac{ \sinh\big( \sqrt[4]{1/\lambda} \big) - \sin\big( \sqrt[4]{1/\lambda}\big)}{ \cosh\big( \sqrt[4]{1/\lambda} \big) 
		+ \cos\big( \sqrt[4]{1/\lambda} \big)}
\end{align}
and
\begin{align}\label{Eq:KLforIntegratedBM_L2_EigenvalueEq}
	1 + \cosh\big( \sqrt[4]{1/\lambda} \big) \,\cos\big( \sqrt[4]{1/\lambda} \big) = 0.
\end{align}
Thus, a $L^2$-normalized solution of \eqref{Eq:KLforIntegratedBM_L2_EigenvectorEq}, respectively, \eqref{Eq:KLforIntegratedBM_L2_ODE} with 
boundary conditions \eqref{Eq:KLforIntegratedBM_L2_BoundaryConditions}, can be found as follows: Compute the eigenvalues $\{ \lambda_{j,0} \}$ by 
solving the eigenvalue equation \eqref{Eq:KLforIntegratedBM_L2_EigenvalueEq} and determine the corresponding eigenfunctions of the form 
\begin{align}\label{Eq:EigenfunctionIntBMinL2}
	e_{j,0}(t) 
	= c_{1,j}\Big[ \sinh\Big( \frac{t}{\sqrt[4]{\lambda_{j,0}}} \Big) - \sin\Big( \frac{t}{\sqrt[4]{\lambda_{j,0}}} \Big)  \Big]
	+ c_{2,j}\Big[ \cosh\Big( \frac{t}{\sqrt[4]{\lambda_{j,0}}} \Big) - \cos\Big( \frac{t}{\sqrt[4]{\lambda_{j,0}}} \Big) \Big] 
\end{align}
by choosing $c_{1,j},c_{2,j}\in \R$ such that \eqref{Eq:KLforIntegratedBM_L2_CoeffRatio} and $\|e_{j,0}\|_{L^2} = 1$ are satisfied.\\

(ii) The $H^1$-eigensystem $\{(\lambda_{j,1}, e_{j,1})\mid j\in\N\}$ is obtained by solving \eqref{Eq:EigenEquation_Sobolev} for $k=1$,
\begin{align}\label{Eq:KLforIntegratedBM_H1_EigenvectorEq}
	\lambda \, e(t)
	&= \int_0^1 \gamma_X(s,t) e(s) + \partial^{[1,0]}\gamma_X(s,t) e^{(1)}(s)  \,ds \nonumber\\
	\begin{split}
		&= \int_0^t \left(\frac{1}{2}s^2t - \frac{1}{6}s^3 \right)e(s) + \left(st-\frac{1}{2}s^2\right)e^{(1)}(s) \,ds
		+ \int_t^1 \left(\frac{1}{2}t^2s - \frac{1}{6}t^3 \right)e(s) + \frac{1}{2} t^2 e^{(1)}(s) \,ds.
	\end{split}
\end{align}
Again, successive differentiation transforms this equation into an equivalent ordinary differential equation
\begin{align}\label{Eq:KLforIntegratedBM_H1_ODE}
	\lambda \, e^{[4]}(t) = e(t) - e^{[2]}(t), \quad 0\leq t \leq 1
\end{align}
with boundary conditions 
\begin{align}\label{Eq:KLforIntegratedBM_H1_BoundaryConditions} 
	e(0) = e^{[1]}(0) = 0 \quad\text{and}\quad e^{[2]}(1)= \lambda\, e^{[3]}(1) + e^{[1]}(1) = 0. 
\end{align}
The general solution of \eqref{Eq:KLforIntegratedBM_H1_ODE} is then given by 
\begin{align*}
	e(t) &= c_1\sinh\Big( \sqrt{\frac{2}{\theta+1}}\,t \Big) + c_2\cosh\Big( \sqrt{\frac{2}{\theta+1}}\,t \Big) + c_3\sin\Big( \sqrt{\frac{2}{\theta-1}}\,t \Big)
	+ c_4\cos\Big( \sqrt{\frac{2}{\theta-1}}\,t \Big)
\end{align*}
with $\theta = \sqrt{4\lambda+1}$ for $\lambda>0$. Moreover, the boundary conditions \eqref{Eq:KLforIntegratedBM_H1_BoundaryConditions} imply, 
\begin{align*}
	c_4=-c_2, \qquad c_3 = - \sqrt{\frac{\theta -1}{\theta +1}} \, c_1,
\end{align*}
as well as, 
\begin{align}\label{Eq:KLforIntegratedBM_H1_CoeffRatio}
	\frac{c_1}{c_2}
	= - \frac{(\theta-1)\cosh\left( \sqrt{\frac{2}{\theta+1}} \right) + (\theta+1)\cos\left( \sqrt{\frac{2}{\theta-1}}\right)}
	{(\theta-1)\sinh\left( \sqrt{\frac{2}{\theta+1}} \right) +\sqrt{\theta^2-1}\sin\left( \sqrt{\frac{2}{\theta-1}} \right)}
\end{align}
and 
\begin{align}\label{Eq:KLforIntegratedBM_H1_EigenvalueEq}
	\begin{split}
		&8\lambda + (8\lambda +4)\cosh\left( \sqrt{\frac{2}{\sqrt{4\lambda+1}+1}} \right)\,\cos\left( \sqrt{\frac{2}{\sqrt{4\lambda+1}-1}} \right)\\
		&-4\sqrt{\lambda} \,\sinh\left( \sqrt{\frac{2}{\sqrt{4\lambda+1}+1}} \right)\,\sin\left( \sqrt{\frac{2}{\sqrt{4\lambda+1}-1}} \right) = 0.
	\end{split}
\end{align}
Thus, a $H^1$-normalized solution of \eqref{Eq:KLforIntegratedBM_H1_EigenvectorEq}, respectively, \eqref{Eq:KLforIntegratedBM_H1_ODE} with boundary conditions 
\eqref{Eq:KLforIntegratedBM_H1_BoundaryConditions} can be found as follows: First compute the eigenvalues $\{\lambda_{j,1}\}$ by solving 
the eigenvalue equation \eqref{Eq:KLforIntegratedBM_H1_EigenvalueEq} 
and set $\theta_j = \sqrt{4\lambda_{j,1}+1}$. Then, determine the corresponding eigenfunctions of the form 
\begin{align}\label{Eq:EigenfunctionIntBMinH1}
	\begin{split}
		e_{j,1}(t) &= c_{1,j} \Big[ \sinh\Big( \sqrt{\frac{2}{\theta_j+1}} \,t\Big) 
		- \sqrt{\frac{\theta_j-1}{\theta_j+1}} \sin\Big( \sqrt{\frac{2}{\theta_j-1}} \,t \Big) \Big]
		+ c_{2,j}\Big[ \cosh\Big( \sqrt{\frac{2}{\theta_j+1}}\, t\Big) -\cos\Big( \sqrt{\frac{2}{\theta_j-1}}\, t\Big) \Big]
	\end{split}
\end{align}
by choosing $c_{1,j},c_{2,j}\in \R$ such that \eqref{Eq:KLforIntegratedBM_H1_CoeffRatio} and $\|e_{j,1}\|_{H^1} = 1$ are satisfied.\\

\emph{Evaluation}\quad The eigenvalues $\lambda_{j,0}$ and $\lambda_{j,1}$ are computed for $j=1,\ldots,27$ by solving the respective eigenvalue equations \eqref{Eq:EigenEquation} 
and \eqref{Eq:EigenEquation_Sobolev} numerically. 
Together with the eigenvalues $\lambda_{j,0}^{(1)} = (j-1/2)^{-2}\pi^{-2}$ corresponding to the KL-expansion of Brownian motion (i.e., eigenvalues of 
$T_{\partial^{[1,1]}\gamma_X,0}$ with $\partial^{[1,1]}\gamma_x(s,t)=\min(s,t)$) they are plotted in Figure \ref{Fig:EigenvaluesPlot} and also presented in Table 
\ref{Tab:Eigenvalues}.
Notably, the $H^1$-eigenvalues $\lambda_{j,1}$ are almost identical to those of Brownian motion 
$\lambda_{j,0}^{(1)} \sim (j\pi)^{-2}$ for $j=2,\ldots,21$, before their rate of decay seems to catch up with the $L^2$-eigenvalues
$\lambda_{j,0} \sim (j\pi)^{-4}$.

\begin{figure}[H]
	\raggedleft
	\begin{minipage}{.5\textwidth}
		\raggedright
		\includegraphics[width=8cm,height=5cm]{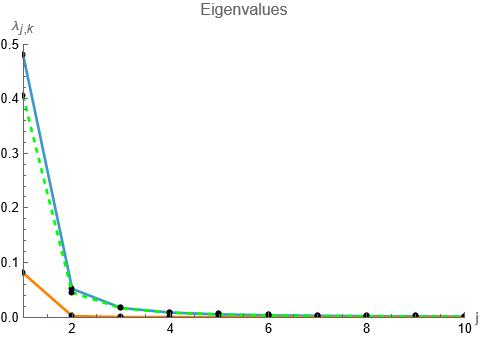}
	\end{minipage}%
	\begin{minipage}{.5\textwidth}
		\raggedright
		\includegraphics[width=8cm,height=5cm]{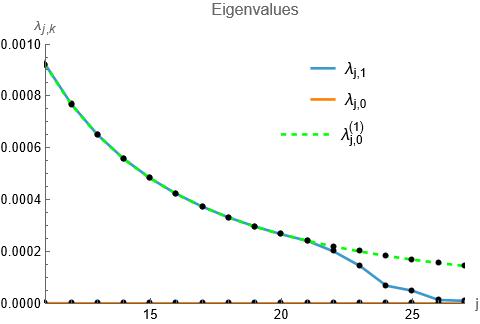}
	\end{minipage}%
	\caption{Comparison of the decrease of the eigenvalues $\lambda_{j,1}$, $\lambda_{j,0}$ and $\lambda_{j,0}^{(1)}$}
	\label{Fig:EigenvaluesPlot}
\end{figure}

The coefficients $(c_{j,1}, c_{j,2})$ of the respective eigenfunctions \eqref{Eq:EigenfunctionIntBMinL2} and \eqref{Eq:EigenfunctionIntBMinH1}
are also computed numerically and presented in Table \ref{Tab:Coefficients}. 
Their respective ratios \eqref{Eq:KLforIntegratedBM_L2_CoeffRatio} and \eqref{Eq:KLforIntegratedBM_H1_CoeffRatio} as functions of $\lambda$ are
plotted in Figure \ref{Fig:CoefficientsRatio}.
For the $L^2$-eigenfunctions $e_{j,0}$ the coefficients converge towards $(-1,1)$ as their ratio \eqref{Eq:KLforIntegratedBM_L2_CoeffRatio} approaches 
negative one when $\lambda$ goes to zero. In contrast, for the $H^1$-eigenfunctions $e_{j,1}$ the coefficients seem to decrease only slowly and their 
limit remains infeasible this way because their ratio begins to oscillate between $+\infty$ and $-\infty$ as $\lambda$ goes to zero.

\begin{figure}[H]
	\raggedleft
	\begin{minipage}{.5\textwidth}
		\raggedright
		\includegraphics[width=8cm,height=5cm]{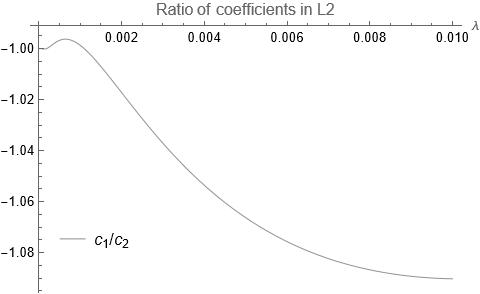}
	\end{minipage}%
	\begin{minipage}{.5\textwidth}
		\raggedright
		\includegraphics[width=8cm,height=5cm]{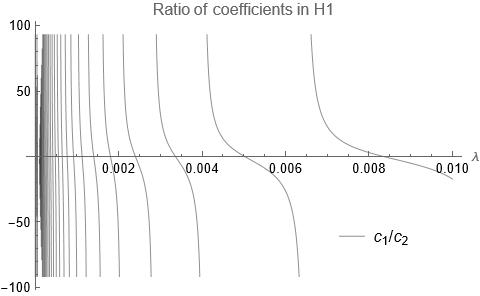}
	\end{minipage}%
	\caption{Ratios \eqref{Eq:KLforIntegratedBM_L2_CoeffRatio} and \eqref{Eq:KLforIntegratedBM_H1_CoeffRatio} of coefficients in the eigenfunctions}
	\label{Fig:CoefficientsRatio}
\end{figure}

The $L^2$- and $H^1$-eigenfunctions $e_{j,0}$ and $e_{j,1}$ themselves are compared in Figure \ref{Fig:Eigenfunctions}: 
the behaviour of the $H^1$-eigenvalues $\lambda_{j,1}$ is closely matched by that of the derivatives $e_{j,1}^{[1]}$ of the corresponding 
$H^1$-eigenfunctions. 
For $j=1,\ldots,21$, these derivatives almost coincide with 
the eigenfunctions of Brownian motion $u_{j,0}(t) = \sqrt{2}\,\sin((j-1/2)\pi \,t)$ 
(i.e., the eigenfunctions of $T_{\partial^{[1,1]}\gamma_X,0}$ with $\partial^{[1,1]}\gamma_X=\min(s,t)$), 
before their frequencies begin to diverge. 
The higher frequency of $e_{j,1}^{[1]}$ from $j=22$ onwards appears to compensate the increased decay of the corresponding eigenvalues $\lambda_{j,1}$. 
This suggests that higher-order expansions tend to emphasize approximating 
the potentially more complex derivatives first to a suitable degree to ensure the optimality conditions \eqref{Eq:KernelExpansionUniformConv} and \eqref{Eq:SobolevApproxError}.
Moreover, the amplitude of the $H^1$-eigenfunctions $e_{j,1}$ decreases inherently, 
and multiplication by the 
$H^1$-eigenvalues $\lambda_{j,1}$ primarily serves to dampen the derivatives $e_{j,1}^{[1]}$. 
Conversely, the amplitude of the $L^2$-eigenfunctions $e_{j,0}$ is bounded but does not decrease inherently, whereas the amplitude of the derivatives $e_{j,0}^{[1]}$ 
explodes. 
\begin{figure}[H]
	\raggedleft
	\begin{minipage}{.33\textwidth}
		\raggedright
		\includegraphics[width=5.5cm,height=3cm]{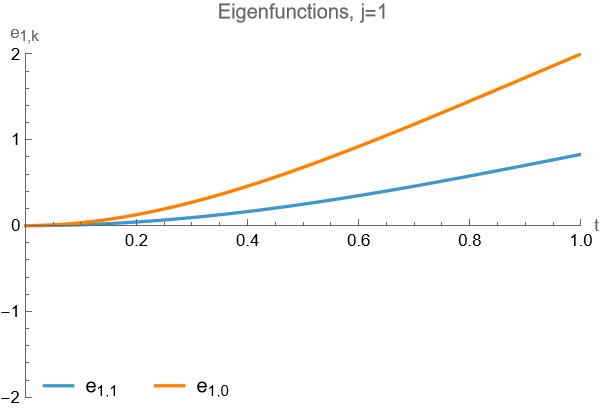}
	\end{minipage}%
	\begin{minipage}{.33\textwidth}
		\raggedright
		\includegraphics[width=5.5cm,height=3cm]{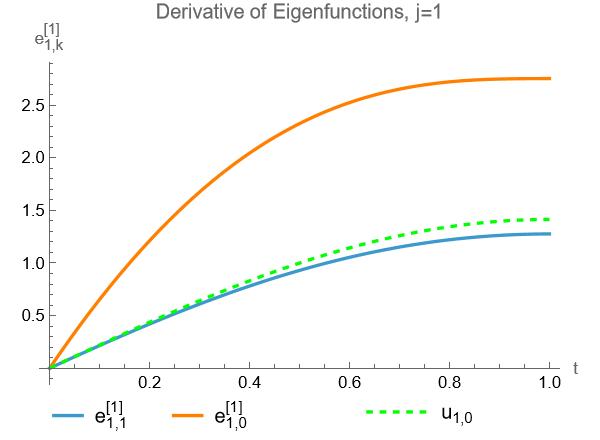}
	\end{minipage}%
	\begin{minipage}{.33\textwidth}
		\raggedright
		\includegraphics[width=5.5cm,height=3cm]{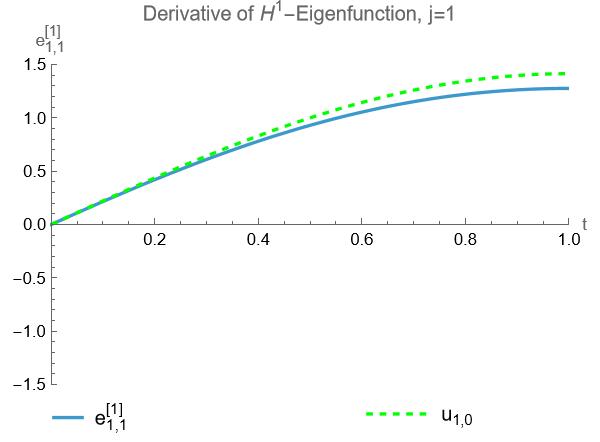}
	\end{minipage}%
\end{figure}

\begin{figure}[H]
	\raggedleft
	\begin{minipage}{.33\textwidth}
		\raggedright
		\includegraphics[width=5.5cm,height=3cm]{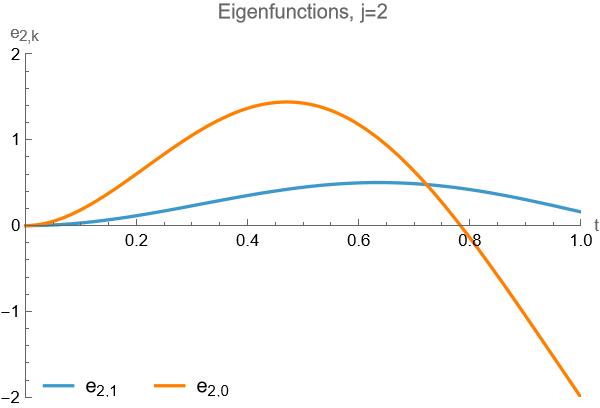}
	\end{minipage}%
	\begin{minipage}{.33\textwidth}
		\raggedright
		\includegraphics[width=5.5cm,height=3cm]{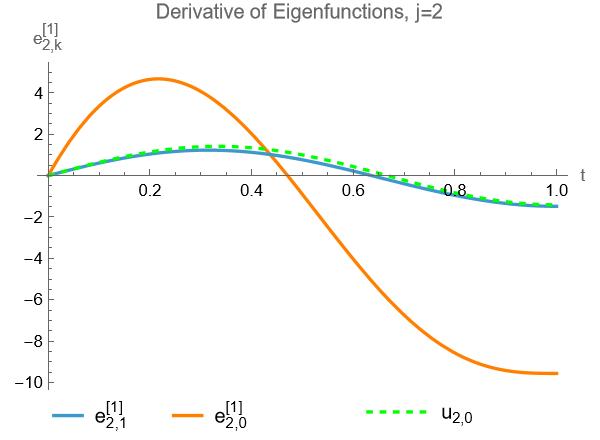}
	\end{minipage}%
	\begin{minipage}{.33\textwidth}
		\raggedright
		\includegraphics[width=5.5cm,height=3cm]{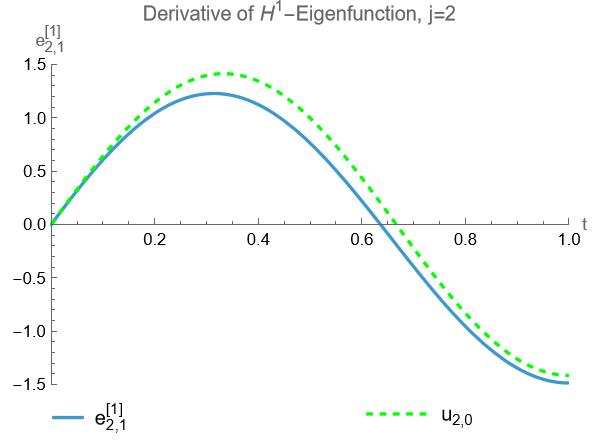}
	\end{minipage}%
\end{figure}

\begin{figure}[H]
	\raggedleft
	\begin{minipage}{.33\textwidth}
		\raggedright
		\includegraphics[width=5.5cm,height=3cm]{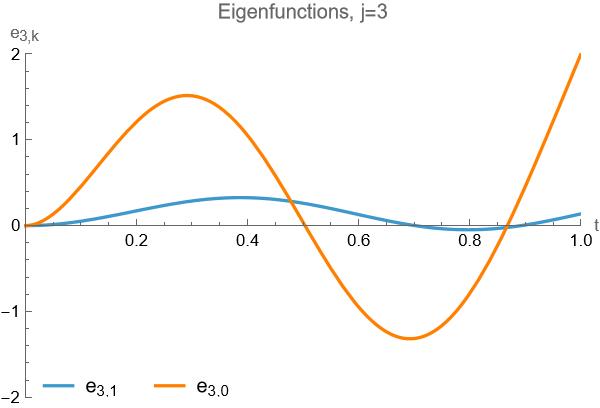}
	\end{minipage}%
	\begin{minipage}{.33\textwidth}
		\raggedright
		\includegraphics[width=5.5cm,height=3cm]{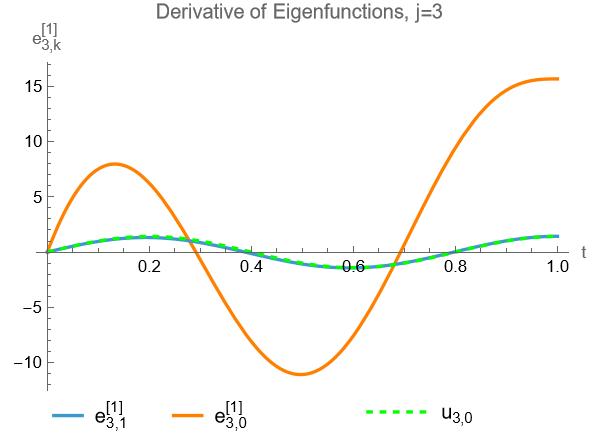}
	\end{minipage}%
	\begin{minipage}{.33\textwidth}
		\raggedright
		\includegraphics[width=5.5cm,height=3cm]{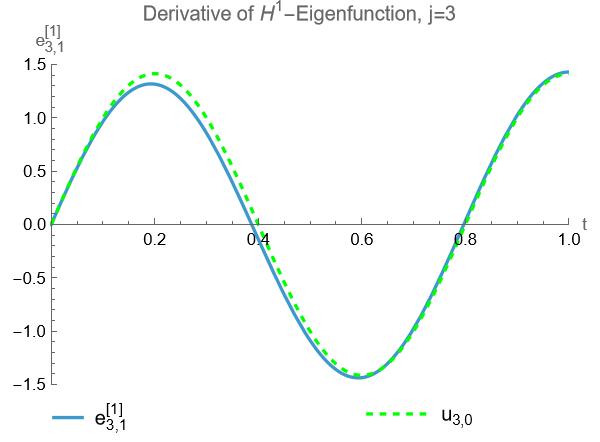}
	\end{minipage}%
\end{figure}

\begin{figure}[H]
	\raggedleft
	\begin{minipage}{.33\textwidth}
		\raggedright
		\includegraphics[width=5.5cm,height=3cm]{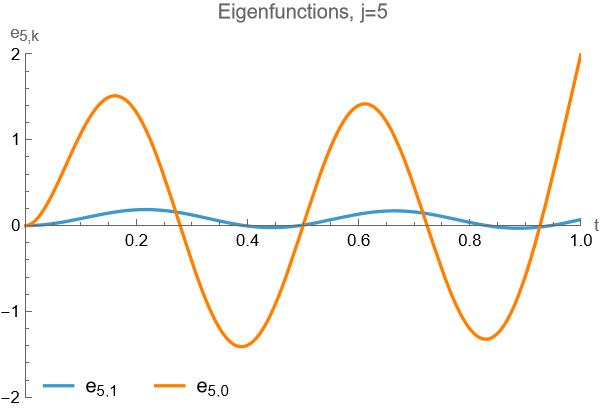}
	\end{minipage}%
	\begin{minipage}{.33\textwidth}
		\raggedright
		\includegraphics[width=5.5cm,height=3cm]{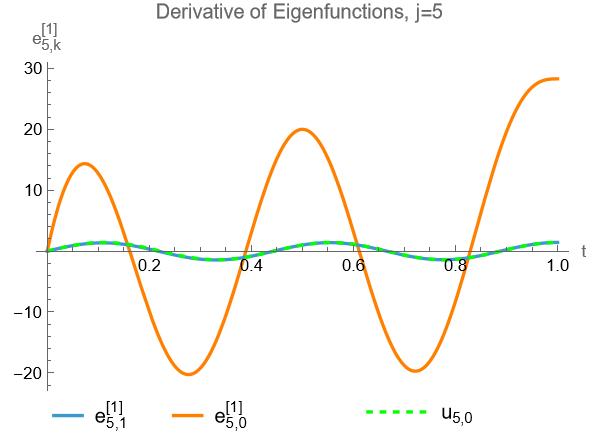}
	\end{minipage}%
	\begin{minipage}{.33\textwidth}
		\raggedright
		\includegraphics[width=5.5cm,height=3cm]{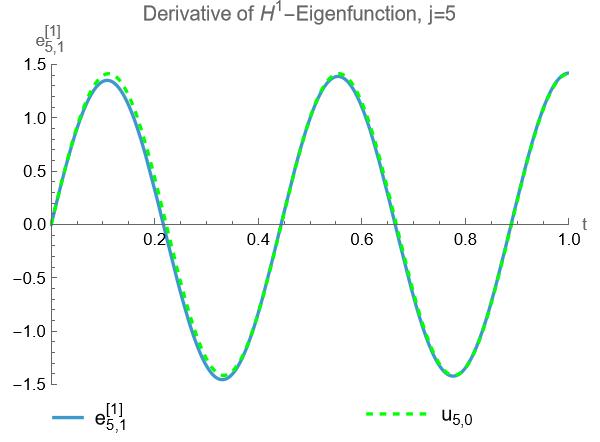}
	\end{minipage}%
\end{figure}

\begin{figure}[H]
	\raggedleft
	\begin{minipage}{.33\textwidth}
		\raggedright
		\includegraphics[width=5.5cm,height=3cm]{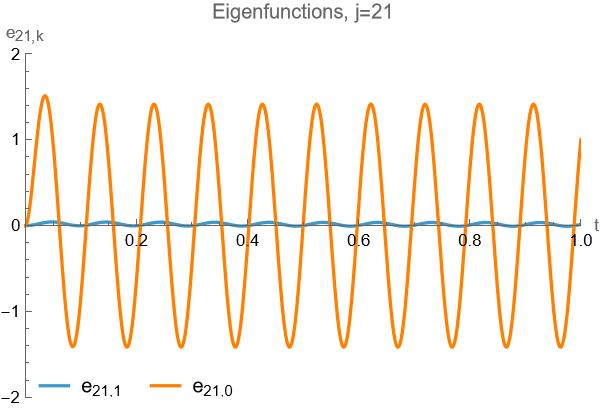}
	\end{minipage}%
	\begin{minipage}{.33\textwidth}
		\raggedright
		\includegraphics[width=5.5cm,height=3cm]{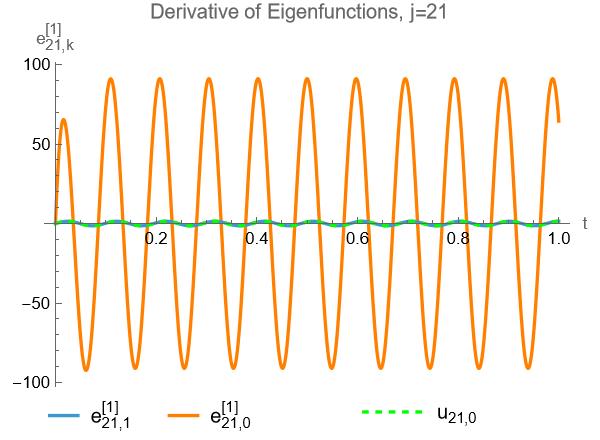}
	\end{minipage}%
	\begin{minipage}{.33\textwidth}
		\raggedright
		\includegraphics[width=5.5cm,height=3cm]{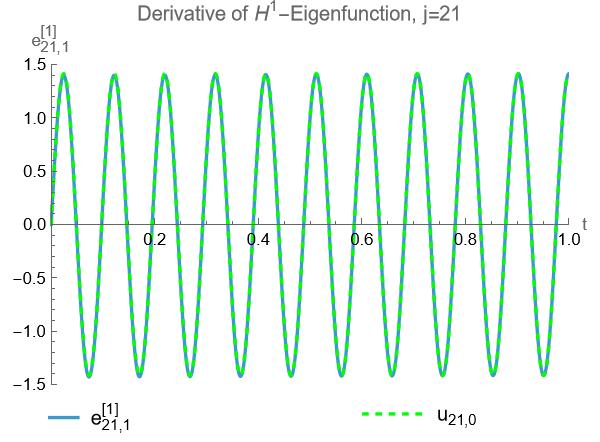}
	\end{minipage}%
\end{figure}

\begin{figure}[H]
	\raggedleft
	\begin{minipage}{.33\textwidth}
		\raggedright
		\includegraphics[width=5.5cm,height=3cm]{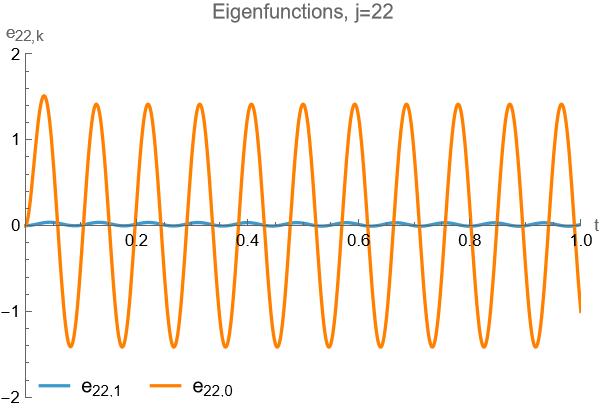}
	\end{minipage}%
	\begin{minipage}{.33\textwidth}
		\raggedright
		\includegraphics[width=5.5cm,height=3cm]{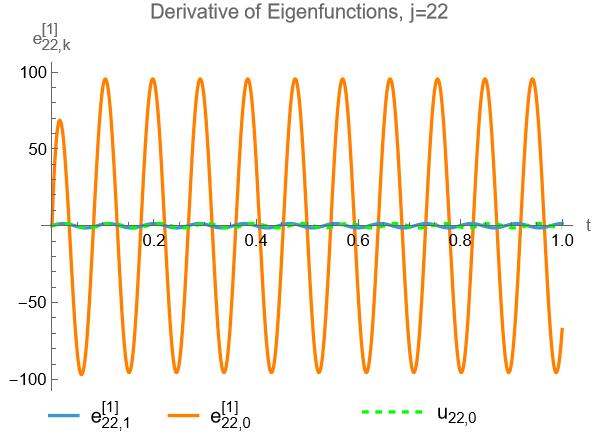}
	\end{minipage}%
	\begin{minipage}{.33\textwidth}
		\raggedright
		\includegraphics[width=5.5cm,height=3cm]{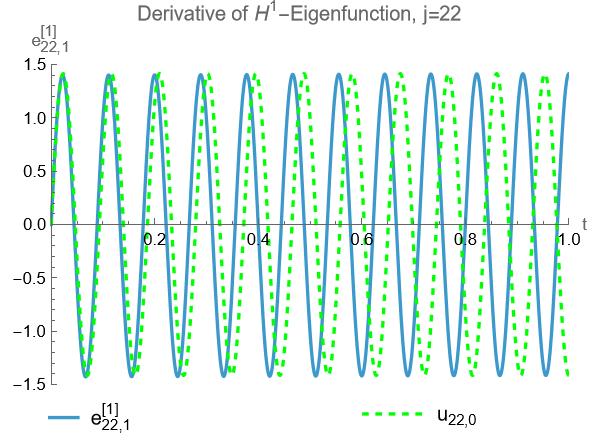}
	\end{minipage}%
\end{figure}

\begin{figure}[H]
	\raggedleft
	\begin{minipage}{.33\textwidth}
		\raggedright
		\includegraphics[width=5.5cm,height=3cm]{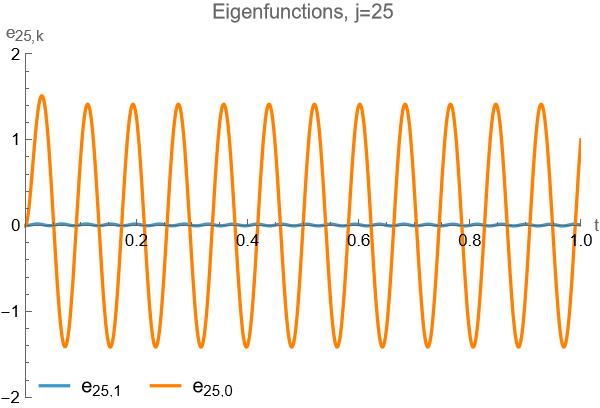}
	\end{minipage}%
	\begin{minipage}{.33\textwidth}
		\raggedright
		\includegraphics[width=5.5cm,height=3cm]{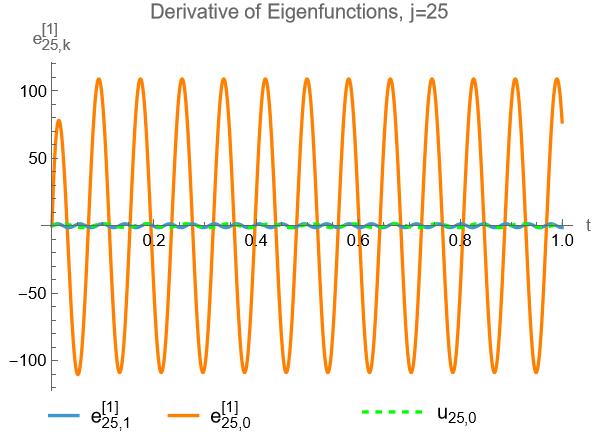}
	\end{minipage}%
	\begin{minipage}{.33\textwidth}
		\raggedright
		\includegraphics[width=5.5cm,height=3cm]{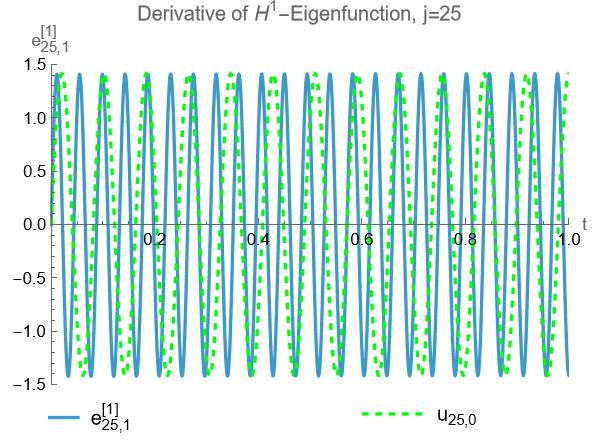}
	\end{minipage}%
	\caption{Comparison of eigenfunctions and their derivatives}
	\label{Fig:Eigenfunctions}
\end{figure}

This difference in eigenfunction behaviour is subsequently reflected in the respective KL- and kernel expansions: 
In order to compare the KL-expansions, we first simulated sample paths of BM via its $L^2$-eigensystem $\{( \lambda_{j,0}^{(1)}, u_{j,0} )\mid j\in\N\}$,
that is, we used $B_{m,0} = \sum_{j\leq m} \lambda_{j,0}^{(1)}\,Z_j\, u_{j,0}$ with $Z_j \sim \NN(0,1)$ i.i.d. and $m=1000$, to model BM,  
and computed the corresponding IBM-sample paths $\widehat{X}_m(t) = \int_0^t B_{m,0}(t)\,dt$ by using a Riemann-approximation. 
We then simulated sample paths of $X_{m,k}$ and its derivative 
using the respective eigensystems $\{( \lambda_{j,k}, e_{j,k} )\mid j\in\N\}$, $k\in\{0,1\}$, that is $X_{m,k}^{[\alpha]} = \sum_{j\leq m} \lambda_{j,k}\, Z_j\, e_{j,k}^{[\alpha]}$ 
for $\alpha = 0,1$ and $m\leq 27$. 
The results are plotted in Figure \ref{Fig:ExpofIBMinH1} and Figure \ref{Fig:KLofIBMinL2}. 
Clearly, the expansion with the $H^1$-eigensystem yields better approximations, not only for the derivative (BM) but also for the process itself (IBM).
This is not in conflict with the mean-square optimality of the classic KL-expansion since $\{ e_{j,1}\mid j\in\N \}$ is not an $L^2$-ONB.

\begin{figure}[H]
	\raggedleft
	\begin{minipage}{.5\textwidth}
		\raggedright
		\includegraphics[width=8cm,height=4cm]{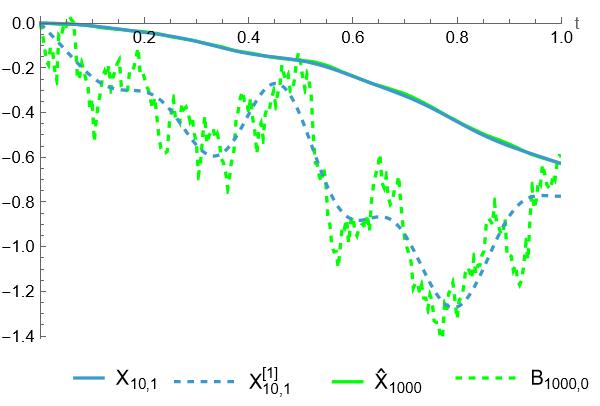}
	\end{minipage}%
	\begin{minipage}{.5\textwidth}
		\raggedright
		\includegraphics[width=8cm,height=4cm]{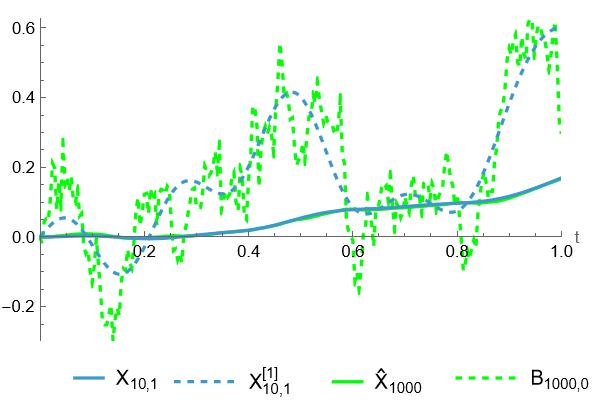}
	\end{minipage}%
\end{figure}
\begin{figure}[H]
	\raggedleft
	\begin{minipage}{.5\textwidth}
		\raggedright
		\includegraphics[width=8cm,height=4cm]{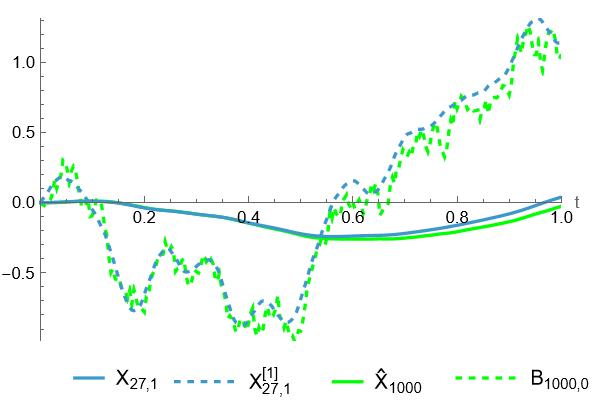}
	\end{minipage}%
	\begin{minipage}{.5\textwidth}
		\raggedright
		\includegraphics[width=8cm,height=4cm]{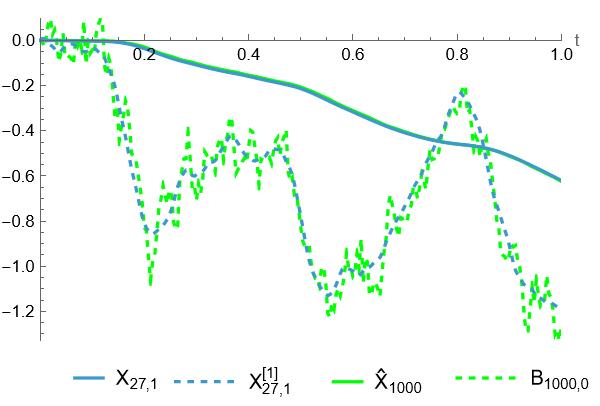}
	\end{minipage}%
	\caption{Sample paths of $H^1$-expansion and corresponding derivative}
	\label{Fig:ExpofIBMinH1}
\end{figure}

\begin{figure}[H]
	\raggedleft
	\begin{minipage}{.5\textwidth}
		\raggedright
		\includegraphics[width=8cm,height=4cm]{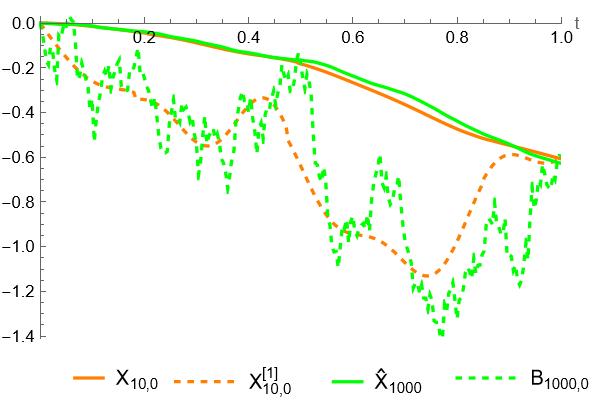}
	\end{minipage}%
	\begin{minipage}{.5\textwidth}
		\raggedright
		\includegraphics[width=8cm,height=4cm]{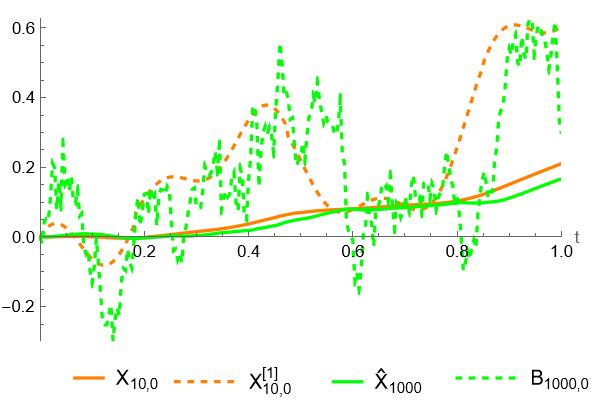}
	\end{minipage}%
\end{figure}
\begin{figure}[H]
	\raggedleft
	\begin{minipage}{.5\textwidth}
		\raggedright
		\includegraphics[width=8cm,height=4cm]{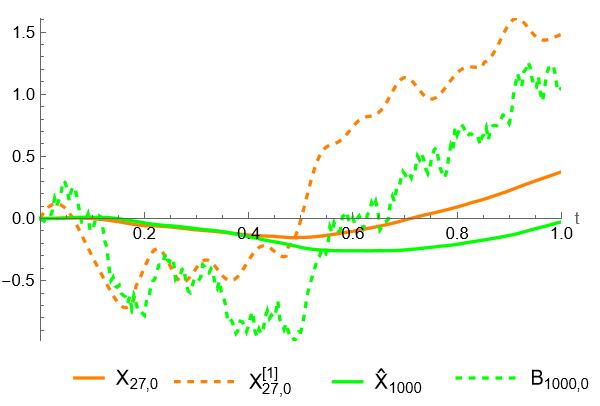}
	\end{minipage}%
	\begin{minipage}{.5\textwidth}
		\raggedright
		\includegraphics[width=8cm,height=4cm]{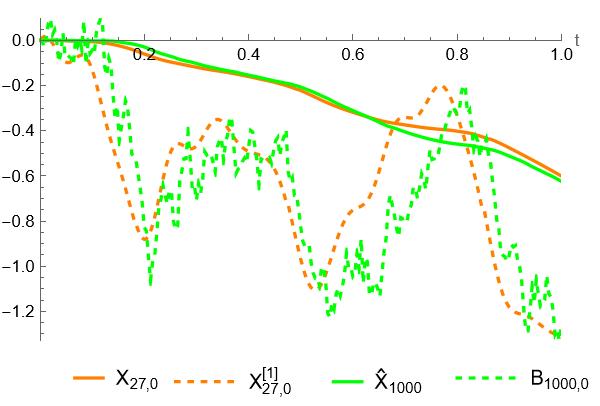}
	\end{minipage}%
	\caption{Sample paths of $L^2$-expansion and corresponding derivative}
	\label{Fig:KLofIBMinL2}
\end{figure}

Regarding the kernel \eqref{Eq:IntegratedBM_kernel} and its derivatives (see Figure \ref{Fig:CovarianceKernel}) 
the respective $H^1$- and $L^2$-expansions with the eigensystems $\{(\lambda_{j,k}, e_{j,k}) \mid j\in\N\}$, $k\in\{0,1\}$, as well as their pointwise error 
(sampled on a grid of resolution $10^{-2}$) are plotted in Figure \ref{Fig:PointwiseErrorH1} and \ref{Fig:PointwiseErrorL2}.
The pointwise error is distributed more 
evenly for the $H^1$-expansion compared to the classic (Mercer) $L^2$-expansion, which exhibits a deteriorating approximation quality near the boundary, 
particularly for the derivatives due to the pathological behaviour of $e_{j,0}^{[1]}$. This is also reflected in the uniform error 
\begin{align*}
	\Delta_{m,k}^{[\alpha,\beta]} = \sup_{x,y \in [0,1]} \Big| \partial^{[\alpha,\beta]} \gamma_X(x,y) 
	- \sum_{j\leq m} \lambda_{j,k}e_{j,k}^{[\alpha]}(x)e_{j,k}^{[\beta]}(y) \Big|, \qquad \alpha,\beta \in\{0,1\},
\end{align*}
which is presented in Table \ref{Tab:UniformError} for different values of $m$. 
\begin{figure}[H]
	\raggedleft
	\begin{minipage}{.5\textwidth}
		\raggedright
		\includegraphics[width=8cm,height=4.5cm]{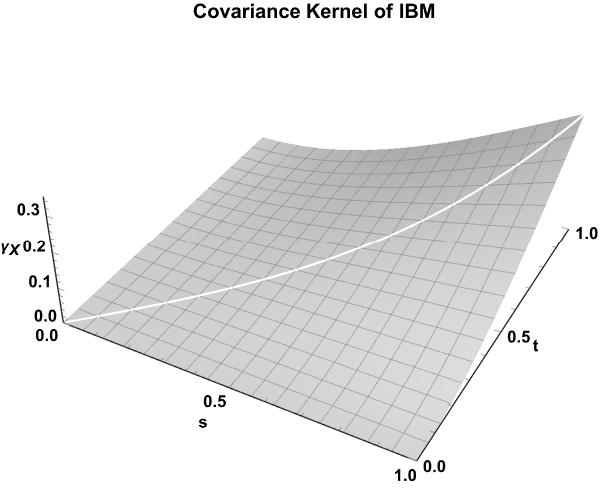}
	\end{minipage}%
	\begin{minipage}{.5\textwidth}
		\raggedright
		\includegraphics[width=8cm,height=4.5cm]{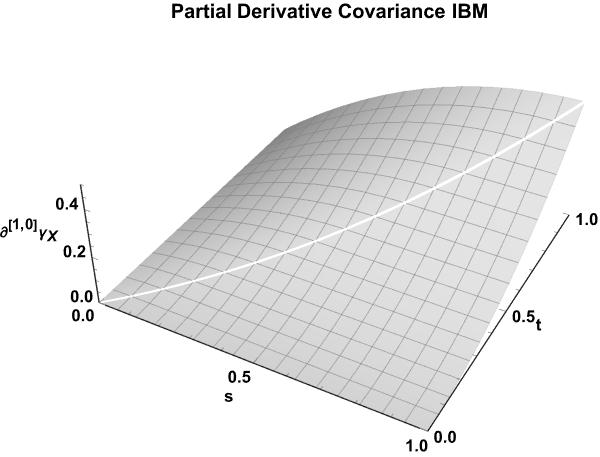}
	\end{minipage}%
\end{figure}
\begin{figure}[H]
	\raggedleft
	\begin{minipage}{.5\textwidth}
		\raggedright
		\includegraphics[width=8cm,height=4.5cm]{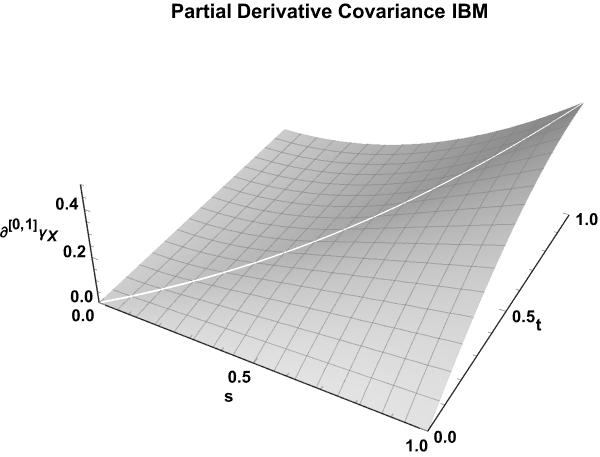}
	\end{minipage}%
	\begin{minipage}{.5\textwidth}
		\raggedright
		\includegraphics[width=8cm,height=4.5cm]{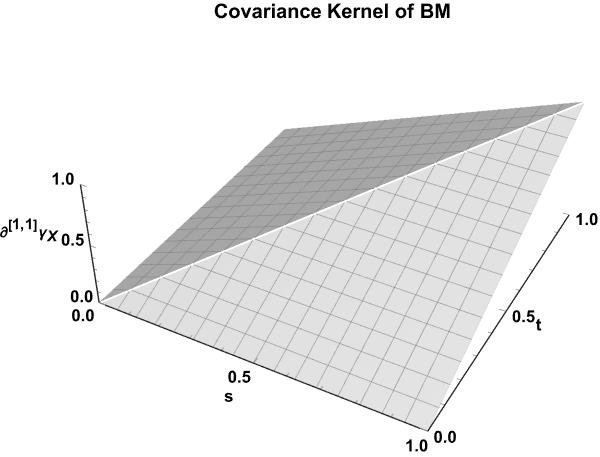}
	\end{minipage}%
	\caption{Covariance Kernel $\gamma_X$ of IBM and its derivatives}
	\label{Fig:CovarianceKernel}
\end{figure}

\begin{table}
\caption{Uniform approximation error}
\centering
\begin{tabular}[t]{lcccccc}
	\toprule
	\multicolumn{1}{c}{$m$} & \multicolumn{2}{c}{$\Delta_{m,k}$} & \multicolumn{2}{c}{$\Delta_{m,k}^{[1,0]}$} & \multicolumn{2}{c}{$\Delta_{m,k}^{[1,1]}$} \\
	\cmidrule(l{3pt}r{3pt}){2-3} \cmidrule(l{3pt}r{3pt}){4-5} \cmidrule(l{3pt}r{3pt}){6-7}
	& $H^1$ & $L^2$ & $H^1$ & $L^2$ & $H^1$ & $L^2$ \\
	\midrule
	3   & $6.491e^{-4}$ & $4.811e^{-4}$ & $4.148e^{-3}$ & $6.979e^{-3}$ & $6.804e^{-2}$ & $1.339e^{-1}$\\
	5   & $1.499e^{-4}$ & $1.074e^{-4}$ & $1.552e^{-3}$ & $2.555e^{-3}$ & $4.063e^{-2}$ & $8.079e^{-2}$\\
	10  & $1.962e^{-5}$ & $1.740e^{-5}$ & $3.985e^{-4}$ & $7.563e^{-4}$ & $2.028e^{-2}$ & $4.386e^{-2}$\\
	20  & $2.530e^{-6}$ & $1.442e^{-5}$ & $9.635e^{-5}$ & $6.357e^{-4}$ & $1.013e^{-2}$ & $3.880e^{-2}$\\
	27  & $1.681e^{-6}$ & $1.417e^{-5}$ & $7.078e^{-5}$ & $6.175e^{-4}$ & $8.680e^{-3}$ & $3.750e^{-2}$\\
	\bottomrule
\end{tabular}
\label{Tab:UniformError}
\end{table}

\begin{figure}[H]
	\raggedleft
	\begin{minipage}{.33\textwidth}
		\raggedright
		\includegraphics[width=5.5cm,height=3.6cm]{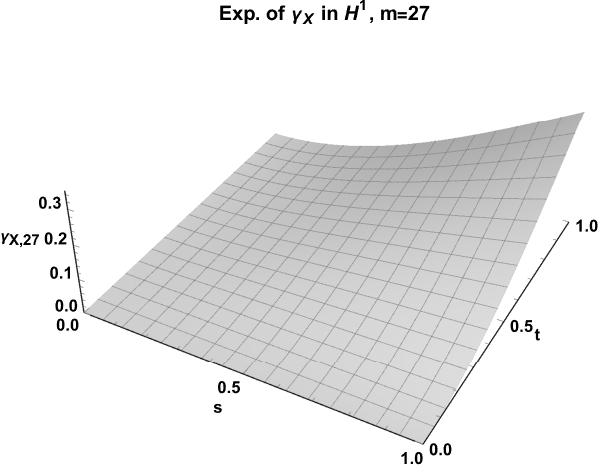}
	\end{minipage}%
	\begin{minipage}{.33\textwidth}
		\raggedright
		\includegraphics[width=5.5cm,height=3.6cm]{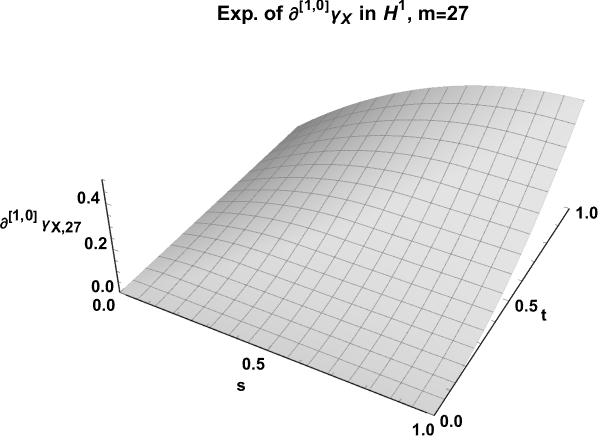}
	\end{minipage}%
	\begin{minipage}{.33\textwidth}
		\raggedright
		\includegraphics[width=5.5cm,height=3.6cm]{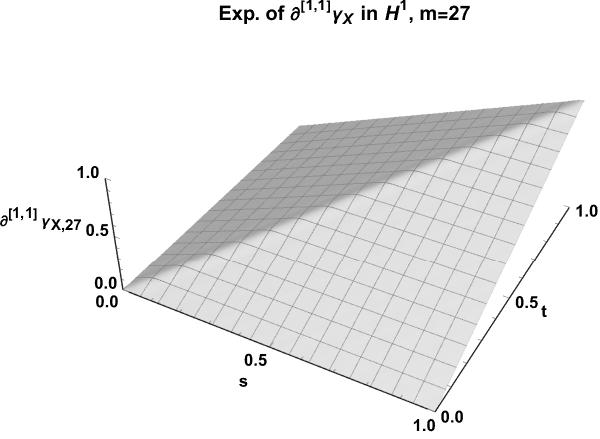}
	\end{minipage}%
\end{figure}

\begin{figure}[H]
	\raggedleft
	\begin{minipage}{.33\textwidth}
		\raggedright
		\includegraphics[width=5.5cm,height=3.6cm]{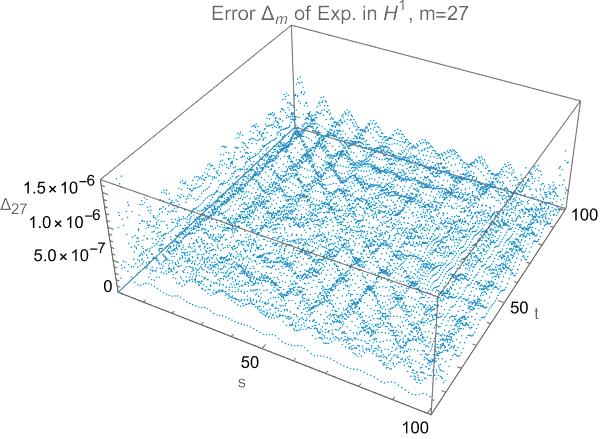}
	\end{minipage}%
	\begin{minipage}{.33\textwidth}
		\raggedright
		\includegraphics[width=5.5cm,height=3.6cm]{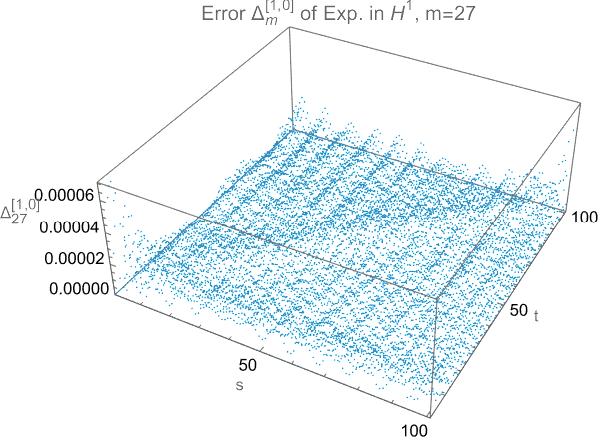}
	\end{minipage}%
	\begin{minipage}{.33\textwidth}
		\raggedright
		\includegraphics[width=5.5cm,height=3.6cm]{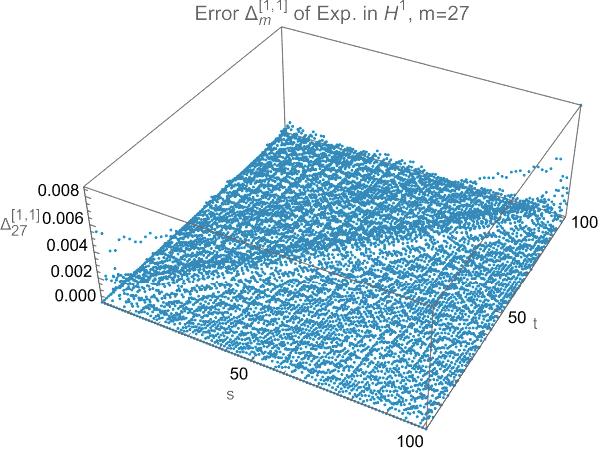}
	\end{minipage}%
	\caption{Kernel expansions with $H^1$-eigenfunctions and pointwise error}
	\label{Fig:PointwiseErrorH1}
\end{figure}

\begin{figure}[H]
	\raggedleft
	\begin{minipage}{.33\textwidth}
		\raggedright
		\includegraphics[width=5.5cm,height=3.6cm]{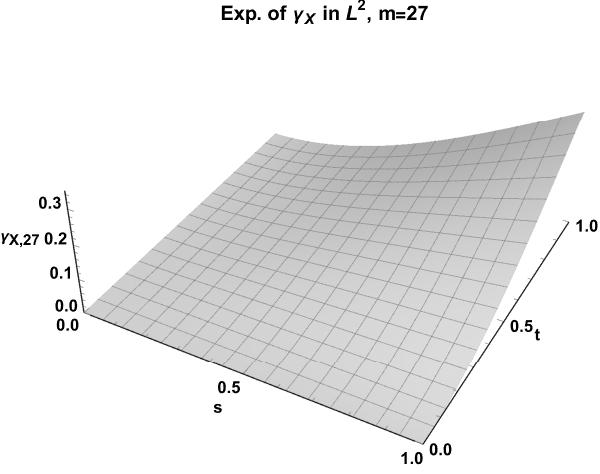}
	\end{minipage}%
	\begin{minipage}{.33\textwidth}
		\raggedright
		\includegraphics[width=5.5cm,height=3.6cm]{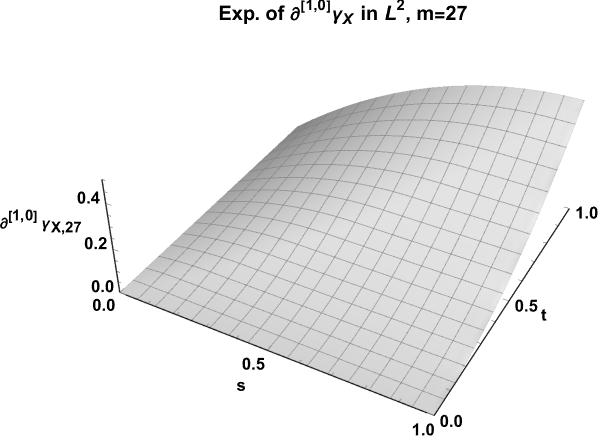}
	\end{minipage}%
	\begin{minipage}{.33\textwidth}
		\raggedright
		\includegraphics[width=5.5cm,height=3.6cm]{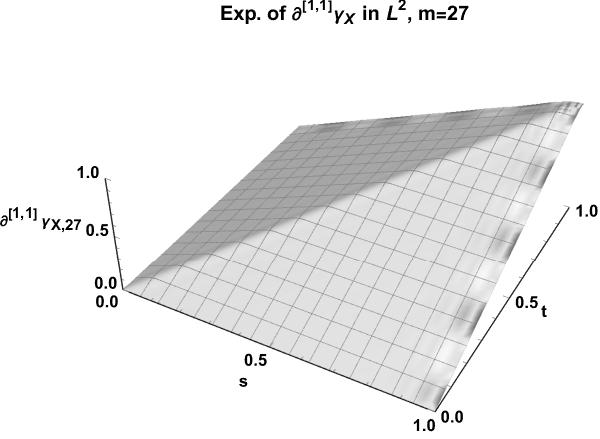}
	\end{minipage}%
\end{figure}

\begin{figure}[H]
	\raggedleft
	\begin{minipage}{.33\textwidth}
		\raggedright
		\includegraphics[width=5.5cm,height=3.6cm]{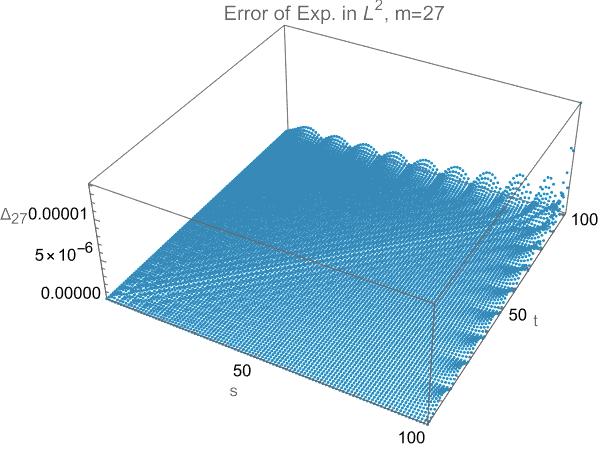}
	\end{minipage}%
	\begin{minipage}{.33\textwidth}
		\raggedright
		\includegraphics[width=5.5cm,height=3.6cm]{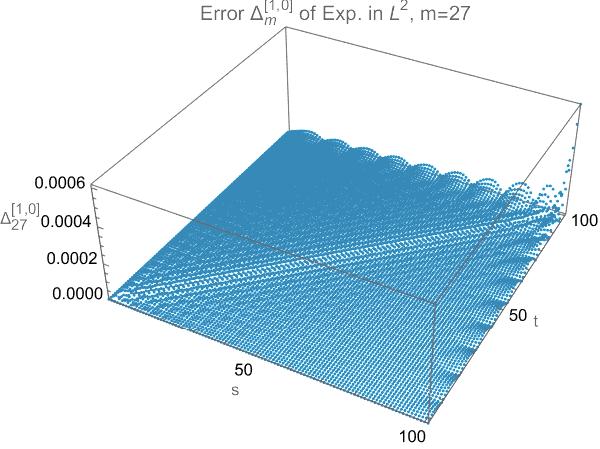}
	\end{minipage}%
	\begin{minipage}{.33\textwidth}
		\raggedright
		\includegraphics[width=5.5cm,height=3.6cm]{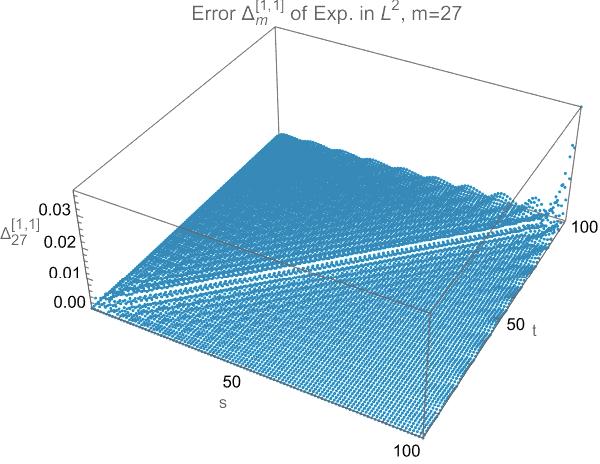}
	\end{minipage}%
	\caption{Kernel expansions with $L^2$-eigenfunctions and pointwise error}
	\label{Fig:PointwiseErrorL2}
\end{figure}
\end{Ex}

\hide{
\subsubsection{Mathematics}
This is an example for the symbol $\alpha$ tagged as inline mathematics.

\begin{equation}
f(x) = (x+a)(x+b)
\end{equation}

\begin{equation*}
f(x) = (x+a)(x+b)
\end{equation*}

\begin{align}
 f(x) &= (x+a)(x+b) \\
      &= x^2 + (a+b)x + ab
\end{align}

\begin{eqnarray}
 f(x) &=& (x+a)(x+b) \nonumber\\ 
      &=& x^2 + (a+b)x + ab
\end{eqnarray}

\begin{align*}
 f(x) &= (x+a)(x+b) \\
      &= x^2 + (a+b)x + ab
\end{align*}

\begin{eqnarray*}
 f(x)&=& (x+a)(x+b) \\
     &=& x^2 + (a+b)x + ab
\end{eqnarray*}


\begin{table}[t]
\centering
\begin{tabular}{l c r}
  1 & 2 & 3 \\ 
  4 & 5 & 6 \\
  7 & 8 & 9 \\
\end{tabular}
\caption{Table Caption}\label{fig1}
\end{table}

\begin{figure}[t]
\centering
\includegraphics{example-image-a}
\caption{Figure Caption}\label{fig1}
\end{figure}
}






\section{Acknowledgement} 
The author is grateful to Jens-Peter Kreiß, Efstathios Paparoditis and Siegfried Hörmann for helpful discussions.\\
\emph{Funding:} This research was funded in whole, or in part, by the Austrian Science Fund (FWF) 
\ead{https://doi.org/10.55776/P35520}{10.55776/P35520}.
For the purpose of open access, the authors have applied a CC BY public copyright licence to any Author Accepted Manuscript version arising 
from this submission.

\newpage
\appendix
\section{Additional results and proofs}
\label{Sec:Appendix_AdditonalResults}

\begin{Lem}\label{Lem:U^kIsAhilbertSpace}
The space $H^{k_\alpha,k_\beta}(\Theta\times\Theta)$ defined by \eqref{Eq:InnerProductMixedOrder} equipped with the inner product \eqref{Eq:InnerProductMixedOrder} 
is a separable Hilbert space. 
\end{Lem}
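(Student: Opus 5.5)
The plan is to realise $H^{k_\alpha,k_\beta}(\Theta\times\Theta)$ isometrically as a closed subspace of a finite product of $L^2$-spaces, in the spirit of Remark \ref{Rem:H_Kappa}. Let $\kappa_\alpha$ and $\kappa_\beta$ be the numbers of multi-indices with $|\alpha|\leq k_\alpha$ and $|\beta|\leq k_\beta$, and set $L^2_{\kappa_\alpha\kappa_\beta}(\Theta\times\Theta)=\prod_{|\alpha|\leq k_\alpha,|\beta|\leq k_\beta}L^2(\Theta\times\Theta)$. This product is a separable Hilbert space, being a finite direct sum of copies of the separable space $L^2(\Theta\times\Theta)$. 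I define $\J h=(\partial^{(\alpha,\beta)}h)_{|\alpha|\leq k_\alpha,|\beta|\leq k_\beta}$.

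First I check that \eqref{Eq:InnerProductMixedOrder} is an inner product. Bilinearity and symmetry are clear. Definiteness follows because the summand with $\alpha=\beta=0$ is $\|h\|_{L^2}^2$, so $\|h\|=0$ forces $h=0$ a.e. Moreover $\J$ is linear, and by construction $\|\J h\|=\|h\|_{H^{k_\alpha,k_\beta}}$. Hence $\J$ is an isometry onto its range.

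The main step is to show that $\ran(\J)$ is closed. Let $(h_n)$ be Cauchy in $H^{k_\alpha,k_\beta}$. Then each sequence $\partial^{(\alpha,\beta)}h_n$ is Cauchy in $L^2(\Theta\times\Theta)$ and converges to some $g_{\alpha,\beta}$; write $h=g_{0,0}$. Fix $\varphi\in C_c^\infty(\Theta\times\Theta)$. Since $\varphi$ and $\partial^{[\alpha,\beta]}\varphi$ lie in $L^2$, continuity of the $L^2$ inner product lets me pass to the limit in \eqref{Eq:WeakDerivativeKernel}:
\[
\int\!\!\int h\,\partial^{[\alpha,\beta]}\varphi=\lim_n\int\!\!\int h_n\,\partial^{[\alpha,\beta]}\varphi=(-1)^{|\alpha|+|\beta|}\lim_n\int\!\!\int \partial^{(\alpha,\beta)}h_n\,\varphi=(-1)^{|\alpha|+|\beta|}\int\!\!\int g_{\alpha,\beta}\,\varphi .
\]
Thus $\partial^{(\alpha,\beta)}h=g_{\alpha,\beta}\in L^2$, so $h\in H^{k_\alpha,k_\beta}$ and $h_n\to h$. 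This gives completeness, and the space is therefore a Hilbert space.

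Separability then follows from standard facts. Every subspace of a separable metric space is separable, and $\J$ is an isometric isomorphism onto $\ran(\J)\subseteq L^2_{\kappa_\alpha\kappa_\beta}$, which is separable.

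The only delicate point is the limit passage in the weak-derivative identity, and it is handled entirely by $L^2$ continuity against test functions. I also need to be careful with a.e. equivalence classes: the weak derivatives are unique in $L^1_{loc}$, so $\J$ is well defined on classes.
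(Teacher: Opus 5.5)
Your proposal is correct and follows essentially the paper's argument: completeness via a Cauchy sequence whose mixed weak derivatives converge in $L^2(\Theta\times\Theta)$, with the limit passed through the weak-derivative identity against test functions, and separability via the isometric embedding $\J$ into a finite product of copies of $L^2(\Theta\times\Theta)$. The paper phrases the limit passage in the language of distributions, but it uses the same $L^2$ pairing estimate you use.
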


\begin{proof}
The proof can be taken from the corresponding result for Sobolev spaces $H^k(\Theta)$ (e.g., \cite{Adams1975SobolevSpace}):\\
Let $(h_n)_n$ be a Cauchy sequence in $H^{k_\alpha,k_\beta}(\Theta \times \Theta)$. Then $(\partial^{(\alpha,\beta)}h_n)_n$ is a Cauchy sequence 
in $L^2(\Theta\times \Theta)$ for $|\alpha| \leq k_\alpha$ and $|\beta|\leq k_\beta$. Since $L^2(\Theta\times \Theta)$ is complete, there exists functions 
$h_{\alpha,\beta}$ such that $\partial^{(\alpha,\beta)}h_n \to h_{\alpha,\beta}$ with respect to $\|\cdot\|_{L^2}$. Set $h:= h_{0,0}$ and note that because
$L^2(\Theta\times \Theta) \subset L^1_{loc}(\Theta\times \Theta)$ each $\partial^{(\alpha,\beta)}h_n$ determines a distribution 
$T_{\partial^{(\alpha,\beta)}h_n} \in \D'(\Theta\times\Theta)$ via integration. Thus, for any $\varphi \in C_c^{\infty}(\Theta\times \Theta)$, 
\begin{align*}
	|T_{\partial^{(\alpha,\beta)} h_n}(\varphi) - T_{h_{\alpha,\beta}}(\varphi)|
	&\leq \int_{\Theta} \int_{\Theta} | \partial^{(\alpha,\beta)}h_n(x,y) - h_{\alpha,\beta}(x,y) |\, |\varphi(x,y)| \,dx\,dy\\
	&\leq \| \partial^{(\alpha,\beta)}h_n - h_{\alpha,\beta} \|_{L^2} \, \|\varphi\|_{L^2} \to 0,
\end{align*}
which yields
\begin{align*}
	T_{h_{\alpha,\beta}}(\varphi)
	= \lim_{n\to\infty} T_{\partial^{(\alpha,\beta)} h_n}(\varphi)
	&= \lim_{n\to\infty} (-1)^{|\alpha|+|\beta|} T_{ h_n}( \partial^{[\alpha,\beta]}\varphi)\\
	&= (-1)^{|\alpha|+|\beta|} T_h( \partial^{[\alpha,\beta]}\varphi ).
\end{align*}
Therefore, $h_{\alpha,\beta} = \partial^{(\alpha,\beta)} h$ with respect to $\| \cdot \|_{L^2}$ and, hence, $h \in H^{k_{\alpha,k_{\beta}}}(\Theta\times \Theta)$ 
such that
\begin{align*}
	\|h_n - h\|_{H^{k_{\alpha,k_{\beta}}}}
	= \sum_{|\alpha| \leq k_{\alpha}} \sum_{|\beta| \leq k_{\beta}} \| \partial^{(\alpha,\beta)} h_n - \partial^{(\alpha,\beta)} h \|_{L^2} \to 0.
\end{align*} 
This shows completeness of $H^{k_\alpha,k_\beta}(\Theta\times\Theta)$ and the Hilbert space structure can be established as follows: 
Let $N=\binom{k_{\alpha}+d}{k_{\alpha}}\binom{k_{\beta}+d}{k_\beta}$ and define on $L_N^2 := \prod_{j=1}^N L^2(\Theta\times \Theta)$ the direct sum norm 
$\|h\|_{L_N^2}^2 = \sum_{j=1}^N \|h_j\|_{L^2}^2$. Then $L_N^2$ becomes a separable Hilbert space and to each $h\in H^{k_{\alpha},k_{\beta}}(\Theta \times \Theta)$
corresponds a vector $\J h = (\partial^{(\alpha,\beta)}h;\,|\alpha|\leq k_{\alpha}, |\beta|\leq k_{\beta}) \in L_N^2$. 
Note that $\|\J h\|_{L_N^2} = \|h\|_{U^{k_\alpha,k_\beta}}$, so $\J\colon U^{k_\alpha,k_\beta}(\Theta\times \Theta) \to L_N^2$ is an isometric isomorphism onto a 
subspace $G \subset L_N^2$. Since $H^{k_\alpha,k_\beta}(\Theta\times \Theta)$ is complete, $G=\J(H^{k_\alpha,k_\beta}(\Theta\times \Theta))$ is also complete, 
thus a closed subspace in $L_N^2$ and, hence, a separable Hilbert space. The same conclusion must also hold for 
$H^{k_\alpha,k_\beta}(\Theta\times \Theta) = P^{-1}(G)$.
\end{proof}

\begin{Lem}\label{Lem:UniformConvergence}
Let $k\in \N_0$ and suppose $h\in C^{k,k}(\ol\Theta\times\ol\Theta)$ is symmetric. Furthermore, let $\{ (\lambda_{j,k},e_{j,k})\mid j\in\N \}$ denote the eigensystem 
of the associated kernel operator $T_h \in HS(H^k)$ of order $k$ defined via \eqref{Eq:AssociatedOperator}.
Then, for $f\in C^k(\ol{\Theta})$ and $|\alpha|\leq k$, 
\begin{align*}
	\lim_{m\to \infty} \sup_{x\in \ol\Theta} \Big| [T_hf]^{[\alpha]}(x) - \sum_{j\leq m} \lambda_{j,k} \< f,e_{j,k} \>_{H^k} e_{j,k}^{[\alpha]}(x) \Big| = 0.
\end{align*}
\end{Lem}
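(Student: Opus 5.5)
The plan is to reduce the claim to a single operator-norm estimate $H^k(\Theta)\to C^k(\ol\Theta)$ for $T_h$, combined with the $H^k$-convergence of the spectral expansion \eqref{Eq:SpectralRepresentation}. Positive definiteness is not needed; only symmetry (so that the spectral theorem applies) and $h\in C^{k,k}(\ol\Theta\times\ol\Theta)$ are used.

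\emph{Step 1 (pointwise form of $T_h g$).} For arbitrary $g\in H^k(\Theta)$, consider the function
\begin{align*}
	y\mapsto \int_\Theta \sum_{|\alpha|\leq k} \partial^{[\alpha,0]}h(x,y)\, g^{(\alpha)}(x)\,dx .
\end{align*}
I would show that it is a representative of $T_hg$ that lies in $C^k(\ol\Theta)$, with classical derivatives
\begin{align*}
	[T_hg]^{[\beta]}(y)=\int_\Theta \sum_{|\alpha|\leq k} \partial^{[\alpha,\beta]}h(x,y)\, g^{(\alpha)}(x)\,dx
\end{align*}
for every $y$ and $|\beta|\leq k$.

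This is the step I expect to need the most care. The argument is differentiation under the integral sign, one coordinate at a time. The mixed partials $\partial^{[\alpha,\beta]}h$ are bounded and uniformly continuous on the compact set $\ol\Theta\times\ol\Theta$, and $g^{(\alpha)}\in L^2\subset L^1$ because $\Theta$ is bounded. The mean value theorem together with dominated convergence then justifies each differentiation. Uniform continuity of the resulting derivatives follows exactly as in \eqref{Eq:UniformContinuityT_hf}. By \eqref{Eq:DerivativeThf}, these classical derivatives agree a.e. with the weak ones. The computation in \eqref{Eq:UniformBoundMercer} carries over verbatim to $g\in H^k(\Theta)$, because only Cauchy--Schwarz is used there. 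This gives
\begin{align*}
	\|T_hg\|_{C^k(\ol\Theta)}\leq c\,\|g\|_{H^k}, \qquad c=\sqrt{\kappa\,\mathrm{vol}(\Theta)}\Big(\sum_{|\alpha|,|\beta|\leq k}\|\partial^{[\alpha,\beta]}h\|_\infty^2\Big)^{1/2}.
\end{align*}

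\emph{Step 2 (eigenfunctions).} Since $\lambda_j\neq 0$, we have $e_j=\lambda_j^{-1}T_he_j$ a.e. Using the representative from Step 1, we may take $e_j\in C^k(\ol\Theta)$ with $e_j^{[\alpha]}=\lambda_j^{-1}[T_he_j]^{[\alpha]}$ everywhere. This makes the pointwise terms $e_{j,k}^{[\alpha]}(x)$ in the statement meaningful.

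\emph{Step 3 (conclusion).} Decompose $f=g_0+\sum_j\<f,e_j\>_{H^k}e_j$ in $H^k(\Theta)$, where $g_0\in\ker(T_h)$. Put $r_m=\sum_{j>m}\<f,e_j\>_{H^k}e_j$. Then $T_hf-\sum_{j\leq m}\lambda_j\<f,e_j\>_{H^k}e_j=T_hr_m$. This identity holds everywhere after passing to the continuous representatives from Step 1, because two continuous functions that agree a.e. on an open set agree everywhere, and the extension to $\ol\Theta$ is unique. Hence
\begin{align*}
	\sup_{x\in\ol\Theta}\Big|[T_hf]^{[\alpha]}(x)-\sum_{j\leq m}\lambda_j\<f,e_j\>_{H^k}e_j^{[\alpha]}(x)\Big|\leq c\,\|r_m\|_{H^k}=c\Big(\sum_{j>m}\<f,e_j\>_{H^k}^2\Big)^{1/2}.
\end{align*}
The right-hand side tends to $0$ by Bessel's inequality. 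The argument in fact works for every $f\in H^k(\Theta)$, so in particular for every $f\in C^k(\ol\Theta)$.
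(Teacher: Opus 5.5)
Your proof is correct, and it takes a somewhat different route from the paper.

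\textbf{The paper's route.} The paper takes $e_{j,k}\in C^k(\ol\Theta)$ from Step~1 of the proof of Theorem~\ref{Thrm:Mercer}. That step uses compactness of $T_h$ on the incomplete space $H_0^k(\Theta)$ together with the spectral theorem on inner product spaces. The paper then notes that the identity already holds in $L^2$ and shows that the series converges absolutely and uniformly. Bessel's inequality, applied to the section $\partial^{[\alpha,0]}h(x,\cdot)\in H^k(\Theta)$, gives
\[
\sum_j|\lambda_j e_j^{[\alpha]}(x)|^2\le \mathrm{vol}(\Theta)\sum_{|\beta|\le k}\|\partial^{[\alpha,\beta]}h\|_\infty^2
\]
uniformly in $x$. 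Cauchy--Schwarz against the tail $\sum_{j\ge N}\langle f,e_j\rangle_{H^k}^2$ then gives uniform absolute convergence. Finally, the uniform limit is identified with $[T_hf]^{[\alpha]}$ through the $L^2$ limit and continuity.

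\textbf{Your route.} You instead extend the estimate \eqref{Eq:UniformBoundMercer} to all of $H^k(\Theta)$, so that $T_h\colon H^k(\Theta)\to C^k(\ol\Theta)$ is bounded. You then apply this bound once, to the remainder $T_hr_m$.

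\textbf{Comparison.} Both arguments rest on the same fact: $g\mapsto [T_hg]^{[\alpha]}(x)=\langle \partial^{[\alpha,0]}h(x,\cdot),g\rangle_{H^k}$ is a bounded functional, uniformly in $x$. The paper uses it termwise on the dual side, while you use it on the residual vector. Your packaging buys three things.
\begin{itemize}
\item Regularity of the eigenfunctions follows directly from $e_j=\lambda_j^{-1}T_he_j$, without the detour through $H_0^k(\Theta)$.
\item The uniform limit needs no separate identification, because the error is literally $[T_hr_m]^{[\alpha]}$.
\item The conclusion holds for every $f\in H^k(\Theta)$, with the explicit bound $c\,(\sum_{j>m}\langle f,e_j\rangle_{H^k}^2)^{1/2}$.
\end{itemize}
What the paper's version gives in addition is absolute convergence of the series, uniformly in $x$. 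Your residual estimate does not provide this directly, although the same Cauchy--Schwarz step would recover it.
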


\begin{proof}
It follows from Step $1$ in the proof of Theorem \ref{Thrm:Mercer} that $T_h$ is a compact operator on $H_0^k(\Theta)$ and $e_j \in C^k(\ol\Theta)$. 
Moreover, due to \eqref{Eq:SpectralRepresentation}, 
the proclaimed equality already holds with respect to $L^2$-convergence. Thus, it suffices to establish uniform convergence of the series 
$\sum_{j\leq m} |\lambda_j \< f,e_j \>_{H^k} e_j^{[\alpha]}(\cdot)|$. To that end, note that by \eqref{Eq:DerivativeT_hfAsInnerProduct}, 
\eqref{Eq:DerivativeT_hfEigenfunction} and Bessel's inequality, for all $x \in \ol\Theta$,
\begin{align*}
	\sum_{j} |\lambda_j \, e_j^{[\alpha]}(x)|^2
	&= \sum_{j} |[T_h e_j]^{[\alpha]} (x)|^2 \\
	&= \sum_{j} | \< \partial^{[\alpha,0]}h(x, \cdot), e_j \>_{H^k} |^2\\
	&\leq \| \partial^{[\alpha,0]} h(x,\cdot) \|_{H^k}^2\\
	&= \sum_{|\beta|\leq k} \int_\Theta |\partial^{[\alpha,\beta]} h(x,y) |^2 \,dy\\
	&\leq \sum_{|\beta|\leq k} \| \partial^{[\alpha,\beta]}h \|_{\infty}^2 \, \mathrm{vol}(\Theta).
\end{align*}
Now, let $\epsilon > 0$ and choose $N=N(\epsilon)\in \N$ such that $\sum_{j\geq N} |\< f,e_j \>_{H^k}|^2 \leq \epsilon^2$. Then, by Cauchy-Schwarz inequality,
\begin{align*}
	\sum_{j\geq N} |\lambda_j \< f,e_j \>_{H^k} e_j^{[\alpha]}(x)| 
	&\leq \Big( \sum_{j\geq N} |\< f,e_j \>_{H^k}|^2  \Big)^{1/2} \Big( \sum_{j\geq N} |\lambda_j\,e_j^{[\alpha]}(x)|^2  \Big)^{1/2}\\
	&\leq \epsilon \,\Big(\sum_{|\beta|\leq k} \| \partial^{[\alpha,\beta]}h \|_{\infty}^2\Big)^{1/2}\sqrt{\mathrm{vol}(\Theta)},
\end{align*}
which yields the required uniform convergence.
\end{proof} 

\begin{Lem}\label{Lem:TestfunctionsAreSeparating2}
Let $S:= \{ \<\cdot, \varphi\>_{L^2} \circ \partial^{(\alpha)} \mid \varphi \in C_c^{\infty}(\Theta), |\alpha|\leq k \}$, 
where $\partial^{(\alpha)}\colon H^k(\Theta) \to L^2(\Theta)$,
$f\mapsto \partial^{(\alpha)}f = f^{(\alpha)}$. Then $S$ is a separating subset for $H^k(\Theta)$.
\end{Lem}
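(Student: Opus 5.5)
The claim is that the family $S$ separates the points of $H^k(\Theta)$. Since every element of $S$ is linear, I will show the equivalent statement: if $f\in H^k(\Theta)$ satisfies $\varphi'(f)=0$ for all $\varphi'\in S$, then $f=0$ in $H^k(\Theta)$. Applied to $f=u-v$, this separates any two distinct elements $u\neq v$.

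First I note that each element of $S$ is a continuous linear functional on $H^k(\Theta)$, so $S\subset (H^k)'$. Indeed, $\partial^{(\alpha)}\colon H^k(\Theta)\to L^2(\Theta)$ is bounded with norm at most one, because $\|f^{(\alpha)}\|_{L^2}\leq \|f\|_{H^k}$. Also, $\<\cdot,\varphi\>_{L^2}$ is bounded on $L^2(\Theta)$ since $\varphi\in C_c^\infty(\Theta)\subset L^2(\Theta)$. This agrees with the notion of a separating subset defined in the Preliminaries.

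For the separation itself, it is enough to use the elements of $S$ with $\alpha=0$. Suppose $\<f,\varphi\>_{L^2}=0$ for all $\varphi\in C_c^\infty(\Theta)$. Because $C_c^\infty(\Theta)$ is dense in $L^2(\Theta)$, continuity of the inner product gives $\<f,g\>_{L^2}=0$ for every $g\in L^2(\Theta)$. Taking $g=f$ yields $f=0$ a.e., i.e.\ $f=0$ in $L^2(\Theta)$. The $H^k$-norm involves the weak derivatives $f^{(\alpha)}$, and these are determined by $f$ through \eqref{Eq:WeakDerivativeFunction}. So $f=0$ a.e.\ forces $f^{(\alpha)}=0$ a.e.\ for all $|\alpha|\leq k$, and therefore $f=0$ in $H^k(\Theta)$. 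The remaining functionals with $|\alpha|\geq 1$ are not needed for this argument. They are included in $S$ because the joint law of the coordinates $\<X^{(\alpha)},\varphi\>_{L^2}$ is what Corollary \ref{Cor:GaussianProcess} works with.

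The argument has no real obstacle. The only point to check is the density of $C_c^\infty(\Theta)$ in $L^2(\Theta)$ for an arbitrary open $\Theta$, which holds by standard mollification and truncation. With that, the proof is complete.
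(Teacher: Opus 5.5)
Your proof is correct and rests on the same fact as the paper's: test functions in $C_c^\infty(\Theta)$ separate $L^2$ functions. The paper invokes this through Adams' Lemma 3.26 for every $f^{(\alpha)}-g^{(\alpha)}$, $|\alpha|\leq k$, whereas you use only the $\alpha=0$ functionals and recover the derivatives from uniqueness of weak derivatives. That variant is valid and slightly sharper, and nothing is lost by it, since on $H^k(\Theta)$ each $\langle\cdot,\varphi\rangle_{L^2}\circ\partial^{(\alpha)}$ equals $(-1)^{|\alpha|}\langle\cdot,\varphi^{[\alpha]}\rangle_{L^2}$.
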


\begin{proof}
Let $f,g \in H^k(\Theta)$ such that $\| f-g \|_{H^k} \not= 0$. Suppose for all $\varphi \in C_c^{\infty}(\Theta)$ and $|\alpha|\leq k$, 
\begin{align*}
	\< f^{(\alpha)}-g^{(\alpha)},\varphi \>_{L^2} = \int_\Theta ( f^{(\alpha)}(x) - g^{(\alpha)}(x) ) \varphi(x) \,dx = 0.
\end{align*}
Then $f^{(\alpha)} = g^{(\alpha)}$ almost everywhere in $\Theta$ for all $|\alpha| \leq k$ by Lemma 3.26 in \cite{Adams1975SobolevSpace}, which contradicts 
$\| f-g \|_{H^k} \not= 0$. Hence, there has to exist a $\varphi \in C_c^{\infty}(\Theta)$ and a $|\alpha| \leq k$ such that 
$\<f^{(\alpha)},\varphi\>_{L^2} \not= \< g^{(\alpha)},\varphi \>_{L^2}$.
\end{proof}

The following sufficient condition for Gaussianity is well-known. However, due to a lack of reference a functional analytic proof is presented. 
Alternatively, a probabilistic proof may also be given in terms of characteristic functionals. 

\begin{Lem}\label{Lem:GaussianInBanchSpace}
Let $E$ be a separable Banach space and $E_0'\subseteq E_0$ a separating subspace. 
Suppose $X$ is an $E$-valued random element such that $\varphi(X)$ is normally distributed for all 
$\varphi \in E_0'$. Then $\varphi(X)$ is normally distributed for all $\varphi \in E'$, i.e. $X$ is Gaussian.
\end{Lem}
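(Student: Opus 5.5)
The plan is to show directly that every $\varphi\in E'$ is a pointwise limit of functionals from the linear span of $E_0'$, evaluated along $X$. This reduces the claim to the closedness of the Gaussian family under linear combinations and under limits in distribution. Since $E_0'$ is a subspace, every finite linear combination of its elements lies in $E_0'$, so $\psi(X)$ is already normal for every $\psi\in E_0'$. Moreover, for finitely many $\psi_1,\ldots,\psi_n\in E_0'$, every linear combination $\sum c_i\psi_i(X)=(\sum c_i\psi_i)(X)$ is normal. Hence the vector $(\psi_1(X),\ldots,\psi_n(X))$ is jointly Gaussian by the Cramér--Wold device, so the family $\{\psi(X)\mid \psi\in E_0'\}$ is a Gaussian family. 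Its closure in probability (equivalently, a.s. limits) consists of normal random variables, possibly degenerate, because limits in distribution of normals $\NN(\mu_n,\sigma_n^2)$ are normal: characteristic functions converge, which forces $\mu_n,\sigma_n$ to converge.

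The key step is therefore to show that, for each $\varphi\in E'$, there exist $\psi_n\in E_0'$ with $\psi_n(x)\to\varphi(x)$ for all $x$ in a set carrying the law $\P_X$, or at least in probability along $X$. Here I would use separability of $E$. Because $E_0'$ is separating and $E$ is separable, the weak$^*$ topology $\sigma(E',E)$ restricted to bounded sets is metrizable. Since $E_0'$ separates the points of $E$, the bipolar theorem gives that $E_0'$ is weak$^*$ dense in $E'$: its preannihilator is $\{0\}$, so its weak$^*$ closure is $E'$. The difficulty is that weak$^*$ density need not give \emph{bounded} approximating sequences. Without boundedness one cannot pass from nets to sequences, so one cannot conclude convergence $\psi_n(X)\to\varphi(X)$ almost surely.

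I expect this to be the main obstacle. The first attempt would be the following. By separability, $\P_X$ is a Radon measure, so for each $\epsilon$ there is a compact $K_\epsilon$ with $\P(X\notin K_\epsilon)<\epsilon$. On a compact set, weak$^*$ convergence of a net gives uniform convergence only if the net is equicontinuous, so boundedness is again needed. To avoid it, I would instead use the $\sigma$-algebra: $\B(E)$ is generated by $E_0'$, since a separating countable subfamily of continuous functionals generates the Borel $\sigma$-algebra of a separable Banach space (the Lusin--Souslin theorem, or the standard result in Vakhania et al.). Consequently $\varphi(X)$ is measurable with respect to $\sigma(\psi(X)\mid\psi\in E_0')$. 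Then I would argue in $L^2$ on $\Omega$ in the following steps:
\begin{itemize}
\item[(1)] If $\E\|X\|^2<\infty$, which should be justified separately or bypassed by truncation, then $\varphi(X)\in L^2(\P)$.
\item[(2)] Let $G$ be the closed linear span of $\{\psi(X)\}$ in $L^2(\P)$, which is a Gaussian Hilbert space. I would show $\varphi(X)\in G$ by the projection argument used in the proof of Corollary \ref{Cor:GaussianProcess}: write $\varphi(X)=Z_1+Z_2$ with $Z_1\in G$ and $Z_2\perp G$, and show $Z_2=0$.
\end{itemize}
Step (2) is the delicate part. Orthogonality of $Z_2$ to $G$ implies independence of $Z_2$ from the Gaussian family only when $Z_2$ is itself jointly Gaussian with it, so I would have to fall back on a characteristic-functional argument. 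Concretely, the law of $X$ restricted to the cylinder $\sigma$-algebra generated by $E_0'$ equals $\B(E)$ and is determined by the Gaussian finite-dimensional marginals. One then constructs a Gaussian measure $\gamma$ on $E$ with the same marginals on $E_0'$, for instance as the law of the $L^2$-limit in $G$. Uniqueness of measures on $\B(E)$ agreeing on the generating $\pi$-system of cylinder sets yields $\P_X=\gamma$, and hence $\varphi(X)$ is normal.
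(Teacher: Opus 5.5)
Your first paragraph sets up the right object, and you correctly locate the crux: weak$^*$ density of $E_0'$ gives only nets, not sequences. But the proposal never resolves this, and the fallback route does not close the gap.

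In step (2), measurability of $\varphi(X)$ with respect to $\sigma(\psi(X)\mid\psi\in E_0')$ says nothing about membership in the Gaussian space. $L^2$ of that $\sigma$-algebra also contains $\psi(X)^2$ and all higher chaoses, so nothing forces $Z_2=0$.

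The proposed rescue is circular. Since $\sigma(E_0')=\B(E)$ and cylinder sets over $E_0'$ form a determining $\pi$-system, any probability measure on $E$ with the same marginals on $E_0'$ as $\P_X$ is $\P_X$ itself. Producing a Gaussian $\gamma$ with these marginals is therefore equivalent to the lemma. Moreover, ``the law of the $L^2$-limit in $G$'' does not define an $E$-valued random element.

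Step (1) is also unavailable. $\E\|X\|^2<\infty$ would follow from Fernique's theorem only once Gaussianity is known, and truncating $X$ destroys the Gaussian marginals. The $\sigma$-algebra route can be completed, but only via the nontrivial theorem that a measurable linear functional with respect to a Radon Gaussian measure lies in the $L^2$-closure of the continuous ones, applied to the law of $(\psi_n(X))_n$ on $\R^{\N}$. Nothing of this kind appears in your plan.

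The missing idea is the Krein--\v{S}mulian theorem. This is how the paper argues, and it makes a single approximating sequence unnecessary.
\begin{itemize}
\item[(a)] Let $\overline{H}$ be the closure in probability of $H=\{\psi(X)\mid\psi\in E_0'\}$. By your first paragraph this is a linear space of jointly Gaussian variables. Put $F:=\{\varphi\in E'\mid\varphi(X)\in\overline{H}\}$.
\item[(b)] $F$ is a linear subspace containing $E_0'$, hence weak$^*$ dense by your bipolar argument.
\item[(c)] $F$ is weak$^*$ sequentially closed: $\varphi_n\to\varphi$ weak$^*$ gives $\varphi_n(X(\omega))\to\varphi(X(\omega))$ for every $\omega$.
\item[(d)] Because $E$ is separable, $r\BB_{E'}$ is weak$^*$ compact and metrizable. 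So $F\cap r\BB_{E'}$ is weak$^*$ closed for every $r>0$, since weak$^*$ lower semicontinuity of the norm keeps sequential limits in the ball.
\item[(e)] Krein--\v{S}mulian then gives that $F$ is weak$^*$ closed, i.e.\ $F=E'$.
\end{itemize}
Defining $F$ through $\overline{H}$ matters. The paper instead takes the set of all $\varphi$ with $\varphi(X)$ normal, but individually normal variables need not have a normal sum, so that set is not obviously a subspace. Your definition guarantees it is.
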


\begin{proof}
Define $G:= \{ \varphi \in E'\mid \varphi(X) ~\text{is normally distr.} \}$ and note that $E_0' \subseteq G \subseteq E'$. Therefore, $G$ is also a separating 
subspace. Moreover,\\
(1.) $G$ is weak*-dense: Indeed, suppose $G$ is not weak*-dense. Then there would exist a $\varphi^* \in E'\backslash \ol G$, where $\ol G$ denotes 
the weak*-closure of $G$.
Since the dual space of $E'$ with respect to the weak*-topology is again $E$, this in turn would imply existence of $u\in E$ by the Hahn-Banach separation 
theorem (e.g., \cite{Werner2018Funktionalanalysis}, Corollary III.1.8) such that $\varphi(u)=0$ for all $\varphi\in \ol G $ but $\varphi^*(u) \not=0$. 
But this contradicts the separation property of $G$ 
and thus $\ol G = E'$.\\
(2.) $G$ is weak*-sequentially closed: For any sequence $(\varphi_n)_n$ in $G$ that converges weak* to some $\varphi\in E'$, that is 
$\lim_{n\to\infty}\varphi_n(u) = \varphi(u)$ for all $u\in E$, it follows that $\varphi_n(X(\omega)) = \varphi(X(\omega))$ for all $\omega \in \Omega$. 
Since $\varphi_n(X)$ is a sequence of Gaussian's and $\varphi_n(X) \to \varphi(X)$ (almost) surely, the limit $\varphi(X)$ must also be Gaussian, 
hence $\varphi \in G$.\\
As a consequence of the Krein-Smulian theorem (e.g., \cite{Hytonen2016AnalysisInBanachSpaces}, Corollary B.1.14), these properties of $G$ yield $G = E'$.
\end{proof}

\begin{proof}[Proof of Lemma \ref{Lem:GaussianElements}]
Suppose $X$ is a Gaussian $H^k(\Theta)$-valued or $H_{\kappa}(\Theta)$-valued random element. Then we have 
$\E\|X\|_{H^k}^p = \E\| X \|_{L_{\kappa}^2}^p < \infty$ for all $p\geq 0$ due to Fernique's celebrated theorem (e.g., \cite{Bogachev1998GaussianMeasures}, 
Theorem 2.8.5). This implies the expectation $\E[X] \in H^k(\Theta)$ and the covariance operator $\Cov_k(X)\in N(H^k)$, respectively,
$\Cov(\J X) \in N(H_{\kappa})$ are well-defined.\\
(i) $\Longleftrightarrow$ (ii): If $X$ is a Gaussian $H^k(\Theta)$-valued random element, then $\<X,f\>_{H^k}$ is normal distributed for all 
$f \in H^k(\Theta)$ such that $\E\<X,f\>_{H^k} = \< \E[X], f\>_{H^k}$ and $\Var\<X,f\>_{H^k} = \< \Cov_k(X)f,f \>_{H^k}$. 
On the other hand, suppose $\< X,f \>_{H^k} \sim \NN( \< \mu, f \>_{H^k}, \< Tf,f \>_{H^k} )$ for all $f\in H^k$, where $\mu \in H^k(\Theta)$ and $T \in N(H^k)$
is self-adjoint and non-negative.
Then $X$ is a Gaussian $H^k(\Theta)$-valued random element and
$\E[X]=\mu$, because $\<\E[X],f\>_{H^k} = \E\<X,f\>_{H^k} =\<\mu, f\>_{H^k}$ for all $f\in H^k(\Theta)$.
Moreover, 
\begin{align*}
	\< \Cov_k(X)f,f \>_{H^k} 
	= \E \< X-\mu, f \>_{H^k}^2
	= \Var\< X,f \>_{H^k}  
	= \< Tf,f \>_{H^k}
\end{align*}
for all $f\in H^k(\Theta)$ and since $\Cov_k(X)$ and $T$ are self-adjoint, this implies $\<\Cov_k(X)f,g\>_{H^k} = \<Tf,g\>_{H^k}$ for all $f,g\in H^k(\Theta)$. 
In particular, for an orthonormal basis $\{ e_j\mid j\in\N \}$ of $H^k(\Theta)$,
\begin{align*}
	\| \Cov_k(X) \|_{N(H^k)} 
	= \sum_j \< \Cov_k(X)e_j,e_j \>_{H^k}
	= \sum_j \< Te_j,e_j \>_{H^k}
	= \|T \|_{N(H^k)}.
\end{align*}
Thus, it follows from Theorem 2.21 in \cite{Simon2005TraceIdeals} that $\|T-\Cov_k(X)\|_{N(H^k)}=0$.\\
(i) $\Longrightarrow$ (iii): It follows from Remark \ref{Rem:ExAndCovInH_kappa} that $\J X=(X^{(\alpha)})_{|\alpha|\leq k}$ is a $H_{\kappa}(\Theta)$-valued random element.
Moreover, $\< \J X, \J f \>_{L_{\kappa}^2} = \<X,f\>_{H^k}$ is normal distributed for any $f \in H^k(\Theta)$ and, hence, $\J X$ is Gaussian. The formulas for the 
expectation and the covariance operator follow directly from Remark \ref{Rem:ExAndCovInH_kappa}.\\
(iii) $\Longrightarrow$ (i): Since $(X^{(\alpha)})_{|\alpha|\leq k}=\J X$ is a $H_{\kappa}(\Theta)$-valued random element and 
$\J ^*\colon H_{\kappa}(\Theta) \to H^k(\Theta)$ is $\B(H_{\kappa})$-$\B(H^k)$-measurable, $X=\J^*\J X$ is $\A$-$\B(H^k)$-measurable.
Moreover, $\< X,f \>_{H^k} = \< \J X,\J f \>_{L^2_\kappa}$ is normally distributed for any $f\in H^k(\Theta)$ and, hence, $X$ is a Gaussian $H^k(\Theta)$-valued 
random element. 
Finally, $\E[\J X] = (\E[X]^{(\alpha)})_{|\alpha|\leq k}$ by assumption, so $\E[X] = \J^*\E[\J X] = \E[X]^{(0)}$. Similarly, for any $f\in H^k(\Theta)$, 
almost everywhere
\begin{align*}
	[\Cov_k(X)f](t)
	&= [\J^*\,\Cov(\J X)\J f](t)
	= \sum_{|\alpha|\leq k} \int_\Theta \partial^{(\alpha,0)}c_X(s,t)\,f^{(\alpha)}(s) \,ds. \qedhere
\end{align*}
\end{proof}

\section{Table of eigenvalues and coefficients in example \ref{Ex:KLExpansionSobolev}}

\begin{table}[H]
\caption{Numerically computed eigenvalues}
\centering
\begin{tabular}[t]{cccc}
	\toprule
	$j$	& $\lambda_{j,0}$ & $\lambda_{j,1}$  & $\lambda_{j,0}^{(1)}$  \\
	\midrule
	1  & 0.08089068167678326 & 0.48086018659210106 & 0.40528473456935109 \\
	2  & 0.00205965240041970 & 0.05188896418232924 & 0.04503163717437234 \\
	3  & 0.00026270533181986 & 0.01679276625548909 & 0.01621138938277404 \\
	4  & 0.00006841221198552 & 0.00846233354496206 & 0.00827111703202757 \\
	5  & 0.00002503515450258 & 0.00506233637514264 & 0.00500351524159693 \\
	6  & 0.00001121888658271 & 0.00337959494580253 & 0.00334946061627563 \\
	7  & 0.00000575104919253 & 0.00241194641180081 & 0.00239813452407900 \\
	8  & 0.00000324455735463 & 0.00180983003299179 & 0.00180126548697489 \\
	9  & 0.00000196663972025 & 0.00140714725673274 & 0.00140236932376938 \\
	10 & 0.00000126039330634 & 0.00112596695689149 & 0.00112267239492895 \\
	11 & 0.00000084458490071 & 0.00092107962719789 & 0.00091901300355862 \\
	12 & 0.00000058696086733 & 0.00076765835211689 & 0.00076613371374168 \\
	13 & 0.00000042049463315 & 0.00064948950852601 & 0.00064845557531096 \\
	14 & 0.00000030907610831 & 0.00055674545278783 & 0.00055594613795521 \\
	15 & 0.00000023223544260 & 0.00048248117921235 & 0.00048190812671742 \\
	16 & 0.00000017785812783 & 0.00042219077563854 & 0.00042173229403679 \\
	17 & 0.00000013850477062 & 0.00037250497547070 & 0.00037216229069729 \\
	18 & 0.00000010945820313 & 0.00033112614241088 & 0.00033084468128110 \\
	19 & 0.00000008764226557 & 0.00029626166312213 & 0.00029604436418506 \\
	20 & 0.00000007100060735 & 0.00026664160413764 & 0.00026645939156433 \\
	21 & 0.00000005812795777 & 0.00024124176933565 & 0.00024109740307516 \\
	22 & 0.00000004804483094 & 0.00020024023217768 & 0.00021919131128683 \\
	23 & 0.00000004005626363 & 0.00014433267360853 & 0.00020014060966388 \\
	24 & 0.00000003366115860 & 0.00006836802969012 & 0.00018346977572175 \\
	25 & 0.00000002849286837 & 0.00004894760841817 & 0.00016879830677607 \\
	26 & 0.00000002427950272 & 0.00001323423080357 & 0.00015581881375215 \\
	27 & 0.00000002081694590 & 0.00001003180514519 & 0.00014428078838354 \\
	\midrule
	$\sum_j$ & 0.08333315959712190 & 0.57899364818404330 & 0.49624779233385769\\
	\bottomrule
\end{tabular}
\label{Tab:Eigenvalues}
\end{table}

\begin{table}
\caption{Coefficients in the eigenfunctions \eqref{Eq:EigenfunctionIntBMinL2} and \eqref{Eq:EigenfunctionIntBMinH1}}
\centering
\begin{tabular}[t]{lcccc}
	\toprule
	\multicolumn{1}{c}{$j$} & \multicolumn{2}{c}{$e_{j,0}$} & \multicolumn{2}{c}{$e_{j,1}$} \\
	\cmidrule(l{3pt}r{3pt}){2-3} \cmidrule(l{3pt}r{3pt}){4-5} 
	& $c_{1,j}$ & $c_{2,j}$ & $c_{1,j}$ & $c_{2,j}$\\
	\midrule
	1  & -0.7340955138 & 1.0000000000 & -0.2081918396 & 0.6174903583\\
	2  & -1.0184673188 & 1.0000000000 & -0.2779430973 & 0.3097626792\\
	3  & -0.9992244965 & 1.0000000000 & -0.1215625025 & 0.1801681088\\
	4  & -1.0000335533 & 1.0000000000 & -0.1058332801 & 0.1291738338\\
	5  & -0.9999985501 & 1.0000000000 & -0.0715441918 & 0.1001298373\\
	6  & -1.0000000627 & 1.0000000000 & -0.0654586157 & 0.0819745505\\
	7  & -1.0000000000 & 1.0000000000 & -0.0505364120 & 0.0692954063\\
	8  & -1.0000000000 & 1.0000000000 & -0.0473800420 & 0.0600683171\\
	9  & -1.0000000000 & 1.0000000000 & -0.0390461987 & 0.0529798202\\
	10 & -1.0000000000 & 1.0000000000 & -0.0371245541 & 0.0474074011\\
	11 & -1.0000000000 & 1.0000000000 & -0.0318083595 & 0.0428834802\\
	12 & -1.0000000000 & 1.0000000000 & -0.0305179082 & 0.0391564374\\
	13 & -1.0000000000 & 1.0000000000 & -0.0268326445 & 0.0360195910\\
	14 & -1.0000000000 & 1.0000000000 & -0.0259070580 & 0.0333524714\\
	15 & -1.0000000000 & 1.0000000000 & -0.0232024012 & 0.0310499527\\
	16 & -1.0000000000 & 1.0000000000 & -0.0225064086 & 0.0290472920\\
	17 & -1.0000000000 & 1.0000000000 & -0.0204370850 & 0.0272854688\\
	18 & -1.0000000000 & 1.0000000000 & -0.0198948211 & 0.0257266396\\
	19 & -1.0000000000 & 1.0000000000 & -0.0182605729 & 0.0243351457\\
	20 & -1.0000000000 & 1.0000000000 & -0.0178262368 & 0.0230874077\\
	21 & -1.0000000000 & 1.0000000000 & -0.0165029430 & 0.0219606241\\
	22 & -1.0000000000 & 1.0000000000 & -0.0150539064 & 0.0200083191\\
	23 & -1.0000000000 & 1.0000000000 & -0.0128051164 & 0.0169879012\\
	24 & -1.0000000000 & 1.0000000000 & -0.0088422614 & 0.0116926873\\
	25 & -1.0000000000 & 1.0000000000 & -0.0075797880 & 0.0098937636\\
	26 & -1.0000000000 & 1.0000000000 & -0.0039302797 & 0.0051446892\\
	27 & -1.0000000000 & 1.0000000000 & -0.0034021301 & 0.0044792012\\
	\bottomrule
\end{tabular}
\label{Tab:Coefficients}
\end{table}

\end{document}